\documentclass[10pt,a4paper]{article}
\usepackage[utf8]{inputenc}
\usepackage{amsmath}
\usepackage{pifont}
\usepackage{bm}
\usepackage[american]{babel}
\usepackage[all]{xy}
\usepackage{enumitem}
\usepackage{anysize}
\usepackage[dvipsnames]{xcolor}
\usepackage{graphicx}
\usepackage{stmaryrd}
\usepackage{amsfonts,amstext,amssymb,amsmath,amsthm,bigints}
\usepackage{fullpage}
\usepackage{moreverb}
\usepackage{pifont}
\usepackage[left=2cm,right=2cm,top=2cm,bottom=2cm]{geometry}
\usepackage{esint}
\usepackage{fourier-orns}
\usepackage{mathrsfs}
\usepackage{array}

\usepackage{hyperref}

\usepackage{authblk}
\newcolumntype{C}[1]{>{\centering\arraybackslash}b{#1}}
\newcolumntype{R}[1]{>{\raggedleft\arraybackslash}b{#1}}
\newcolumntype{L}[1]{>{\raggedright\arraybackslash}b{#1}}
\newcolumntype{M}[1]{>{\centering}m{#1}}
\newtheorem{theo}{Theorem}[section]
\newtheorem{defin}{Definition}[section]
\newtheorem{lem}{Lemma}[section]
\newtheorem{prop}{Proposition}[section]

\newtheorem{remark}{Remark}[section]

\newtheorem{cor}{Corollary}[section]
\usepackage{bbm}
\numberwithin{equation}{section}
\usepackage{subfigure}

\usepackage{pgfplots}
\usepackage{amsfonts,amsmath}
\usepackage{tikz-3dplot}
\pgfplotsset{compat=newest}
\usetikzlibrary{calc}
\usetikzlibrary{quotes,angles}
\usepackage{pgfplots}
\pgfplotsset{compat=1.12}
\usetikzlibrary{math}

\usepackage{mathtools,needspace}
\mathtoolsset{showonlyrefs}
\hypersetup{pdftitle={Space-time localized nonuniqueness for the inhomogeneous Navier-Stokes equations}}

\newcommand{\Ro}{\mathcal R_0}
\newcommand{\Rs}{\mathcal R}
\newcommand{\Bog}{\mathcal B}
\newcommand{\Sym}{\mathcal S}
\newcommand{\Pne}{\mathbb P_{\ne0}}
\newcommand{\TT}{\mathbb{T}}
\newcommand{\RR}{\mathbb{R}}
\newcommand{\NN}{\mathbb{N}}
\newcommand{\ZZ}{\mathbb{Z}}
\newcommand{\bfe}{\mathbf{e}}
\newcommand{\dd}{\,\mathrm{d}}
\newcommand{\HZ}{\mathcal H}
\DeclareMathOperator{\D}{div}
\DeclareMathOperator{\curl}{curl}
\DeclareMathOperator{\Id}{Id}
\DeclareMathOperator{\Tr}{Tr}
\DeclareMathOperator{\dist}{dist}
\DeclareMathOperator{\Supp}{supp}

\newcommand{\proofstep}[1]{\par\vspace{0.5em}\Needspace{4\baselineskip}\noindent\textbf{#1.}\par\nobreak\smallskip\nobreak}
\setlist[enumerate]{itemsep=3pt}
\setlist[itemize]{itemsep=3pt}

\date{}
\title{Space--time localized nonuniqueness for the \\ inhomogeneous Navier--Stokes equations}

\author{Xingyu Li}
\begin{document}
	\maketitle
	\begin{abstract}
		We prove space--time localized nonuniqueness for the inhomogeneous
		incompressible Navier--Stokes equations on the three-dimensional torus.
		For every $0<s<3/2$ and $p>1$ sufficiently close to one, we construct
		weak solutions with strictly positive density in $C_tL^p$ and velocity
		in $C_tH^s$. These solutions approximate prescribed smooth density
		and velocity fields satisfying the necessary mass and spatial first
		moment conditions, while agreeing with a given smooth solution outside
		a compact subset of a prescribed cylinder. In particular, there are
		infinitely many nontrivial solutions which coincide with the rest
		state outside that cylinder and whose density is not renormalized.

		The proof uses convex integration with concentrated density profiles
		transported exactly by the perturbed velocity. Nonnegative coefficients
		cancel the continuity defect and the stress simultaneously. A
		stationary density corrector preserves positivity through successive
		corrections, and
		compactly supported inverse divergence operators preserve spatial
		localization.
	\end{abstract}
	\tableofcontents
\section{Introduction}

The inhomogeneous incompressible Navier--Stokes equations describe a
viscous fluid whose density is transported by the flow. For densities
that are bounded and bounded away from zero the Cauchy problem is well
understood. Weak solutions with finite energy exist
\cite{Kaz74,Sim90,Lions96}, strong solutions are unique and exist
globally under smallness or two-dimensionality assumptions
\cite{LS75,Des97,Dan03,AGZ12,DM12,PZZ13,Li17,DM19}, and Hao, Shao, Wei
and Zhang~\cite{Hao25} recently proved uniqueness of Lions' weak
solutions in the two-dimensional whole space with $L^2$ initial
velocity, together with global well-posedness in three dimensions for
bounded initial density and small initial velocity in
$\dot H^{1/2}(\RR^3)$. All these results take place in a class of
solutions with bounded density and finite energy.

The question addressed here is how much of this uniqueness survives
when the density is only integrable to a power close to one and the
equations are imposed in the sense of distributions. We show that in
this regime uniqueness fails completely. For every $0<s<3/2$ there are
infinitely many weak solutions with strictly positive density in
$C_tL^p$, $p>1$ close to one, and velocity in $C_tH^s$, all emanating
from the same smooth initial data. Neither the positivity of the
density nor the viscosity obstructs the construction. Moreover, the
entire perturbation can be confined to an arbitrary space--time
cylinder: outside a compact subset of that cylinder the density,
velocity and pressure coincide with a given smooth solution.

The two regimes leave a gap. Uniqueness is known for bounded densities
in the energy class, and nonuniqueness is shown here for densities in
$L^p$ with $p$ close to one and without an energy inequality. Whether
uniqueness persists for densities in $L^q$ with large $q$, or for weak
solutions satisfying the energy inequality, remains open. For the
homogeneous equations, nonuniqueness below the energy class is known
from \cite{BV19,CL22,CL23}, and within the Leray class it is known in
the presence of a forcing term \cite{ABC22}; here the mechanism is the
density rather than a force.

We consider the equations on $\TT^3=\RR^3/\ZZ^3$,
\begin{equation}\label{eq:INS}
\left\{
\begin{aligned}
\partial_t\rho+\D(\rho u)&=0,\\
\partial_t(\rho u)+\D(\rho u\otimes u)+\nabla\Pi-\Delta u&=0,\\
\D u&=0.
\end{aligned}
\right.
\end{equation}
Here $\rho$ is a scalar density, $u$ an $\RR^3$-valued velocity,
and $\Pi$ a scalar pressure; the viscosity is normalized to one. We use the following notion of
weak solution, which does not impose an energy inequality.

\begin{defin}[Weak solutions]\label{def:weak-INS}
Let $0<s<3/2$ and $1<p<\infty$. A weak solution of \eqref{eq:INS}
consists of a density, velocity and pressure satisfying
\[
\rho\in C_tL^p,\qquad u\in C_tH^s,\qquad
\Pi\in\mathcal D'((0,1)\times\TT^3),
\]
with $\rho\ge\underline\rho$ almost everywhere for some constant
$\underline\rho>0$, and
\[
\rho u\in C_tL^1,\qquad \rho u\otimes u\in C_tL^1,
\]
such that \eqref{eq:INS} holds in the sense of distributions on
$(0,1)\times\TT^3$.
\end{defin}

\subsection{Main results}

Our first result approximates prescribed smooth density and velocity
fields inside an arbitrary cylinder.

Throughout, $B=B(x_0,r_B)\subset\TT^3$ denotes an open ball of radius
$r_B<1/4$. We identify $B$ with a Euclidean ball, so that moments such
as $\int_B(x-x_0)f\dd x$ of functions supported in $B$ are well defined.

\begin{theo}[Localized approximation]\label{thm:main}
For every $0<s<3/2$, there is $p_*(s)>1$ such that the following
holds for $1<p<p_*(s)$. Fix a cylinder
$Q\Subset(0,1)\times\TT^3$ with spatial base $B$, a ball of radius
less than $1/4$.

Let $(\bar\rho,\bar u,\bar\Pi)$ be a smooth solution of
\eqref{eq:INS} and $(\rho_*,u_*)$ a smooth pair on
$[0,1]\times\TT^3$, with $\bar\rho,\rho_*>0$ and $\D u_*=0$.
Assume
\begin{gather}
\Supp(\rho_*-\bar\rho,\;u_*-\bar u)\Subset Q,\notag\\
\int_B(\rho_*-\bar\rho)(t,x)\dd x=0,
\qquad
\int_B (x-x_0)(\rho_*-\bar\rho)(t,x)\dd x=0
\quad(0\le t\le1).\label{eq:moments}
\end{gather}
Then, for every $\delta>0$, there is a weak solution $(\rho,u,\Pi)$,
with $\Pi\in L^1_{t,x}$, satisfying
\[
\|\rho-\rho_*\|_{C_tL^p}+\|u-u_*\|_{C_tH^s}<\delta,
\qquad \rho\ge\rho_*-\delta\quad\text{almost everywhere}.
\]
Moreover, $(\bar\rho,\bar u,\bar\Pi)=(\rho,u,\Pi)$ outside a compact
subset of $Q$.
\end{theo}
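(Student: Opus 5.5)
We will prove Theorem~\ref{thm:main} by a convex integration scheme for a relaxed system, iterating on the relaxation level. At each stage $q$ we carry a tuple $(\rho_q,u_q,\Pi_q,R_q,M_q)$ solving
\begin{equation}
\partial_t\rho_q+\D(\rho_q u_q)=\D M_q,\qquad
\partial_t(\rho_q u_q)+\D(\rho_q u_q\otimes u_q)+\nabla\Pi_q-\Delta u_q=\D R_q,\qquad \D u_q=0.
\end{equation}
Here $M_q$ is a vector-valued continuity defect and $R_q$ a matrix-valued Reynolds stress. All of them are supported in a fixed compact $K_q\subset K\Subset Q$ with $K$ independent of $q$, and $(\rho_q,u_q,\Pi_q)=(\bar\rho,\bar u,\bar\Pi)$ outside $K$. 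We start from $(\rho_0,u_0)=(\rho_*,u_*)$ and $\Pi_0=\bar\Pi$. The moment conditions \eqref{eq:moments} are used exactly here. The defect $\partial_t(\rho_*-\bar\rho)+\D(\rho_*u_*-\bar\rho\bar u)$ is compactly supported in $Q$ and has zero mean. A compactly supported Bogovskii-type inverse divergence therefore produces $M_0$ supported in $Q$. Zero mean and zero first moment of $\rho_*-\bar\rho$ are also exactly what is needed for the momentum defect to be written as $\D R_0$ with $R_0$ symmetric and compactly supported. Indeed, $\partial_t(\rho_*u_*-\bar\rho\bar u)$ then has zero mean, and the antisymmetric part of its first moment vanishes, because by the continuity equations it equals the time derivative of the first moment of $\rho_*-\bar\rho$, up to divergence terms. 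The inductive estimates are
\begin{equation}
\|M_q\|_{L^1}+\|R_q\|_{L^1}\le \delta_{q+1},\qquad
\|\rho_{q+1}-\rho_q\|_{C_tL^p}+\|u_{q+1}-u_q\|_{C_tH^s}\lesssim \delta_{q+1}^{1/2}+\lambda_{q+1}^{-\epsilon},\qquad \rho_{q+1}\ge \rho_q-2^{-q}\delta.
\end{equation}
Summing gives the limit solution, with $\delta$-closeness obtained by choosing $\delta_1$ small. Distributional convergence of the nonlinear terms follows from $\rho_q u_q\to\rho u$ and $\rho_q u_q\otimes u_q\to \rho u\otimes u$ in $C_tL^1$. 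The pressure lies in $L^1$ because $\Pi_q$ is built from $L^1$-controlled pieces.

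For the perturbation step we cut off $R_q,M_q$ to a slightly larger compact set and mollify. We then use space--time localized, intermittent building blocks, for instance Mikado-type tubes of concentration parameter $\mu\gg\lambda_{q+1}$. A density perturbation $\theta_{q+1}=\sum_k a_k^2(t,x)\Theta_k$ is concentrated on thin tubes, so it is small in $L^p$ for $p$ close to $1$. The velocity perturbation $w_{q+1}=\sum_k a_k W_k$ plus a divergence corrector is small in $H^s$ for $s<3/2$ thanks to the 3D intermittency gain; this is where $p_*(s)$ comes from. The amplitudes are nonnegative and are obtained from a geometric lemma. It writes the pair $(-M_q,\,\rho_q\Id-R_q)$ as nonnegative combinations along finitely many directions $k$, with $\Theta_kW_k$ and $\Theta_kW_k\otimes W_k$ averaging to the direction vectors, so that the low-frequency parts of $\D(\theta w)$ and $\D(\theta w\otimes w)$ cancel $\D M_q$ and $\D R_q$ simultaneously. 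The profiles $\Theta_k$ will be transported exactly by the perturbed flow, i.e.\ $\Theta_k$ is constant along the stationary tube direction of $W_k$. Then $\partial_t\theta+ u\cdot\nabla\theta$ only sees derivatives of the amplitudes and of $u_q$, which are low frequency. The remaining errors are the linear error, the transport error, the oscillation error and the corrector error. They are inverted with compactly supported inverse divergence operators (Bogovskii on balls, applied after subtracting means and, for the stress, first moments with a stationary corrector), so all new errors stay inside $K$.

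Positivity is handled by a stationary density corrector. Since $\theta_{q+1}\ge0$ is nonnegative, only the mean-zero and moment corrections can decrease the density. We subtract a smooth, time-independent correction $\bar\theta_{q+1}$ of size $\lesssim\lambda_{q+1}^{-\epsilon}$ that restores the mean and first moment on $B$ at each time, absorbing the time dependence into $M_{q+1}$. The resulting lower bound $\rho_{q+1}\ge\rho_q-C\lambda_{q+1}^{-\epsilon}$ is summable.

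The main obstacle will be the oscillation term in the momentum equation. The cubic interaction $\theta w\otimes w$ has high-frequency remainders, and the time derivative $\partial_t(\theta w)$ has no temporal intermittency to absorb it. I would first try the Modena--Székelyhidi approach: choose tubes so that $\Theta_kW_k\otimes W_k$ is divergence-free up to gradients, then add a temporal corrector solving $\partial_t(\cdot)=$ the high-frequency part, and check that its $L^1$ size is compatible with $p<p_*(s)$.
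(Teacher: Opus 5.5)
\textbf{Main gap: positivity.} Below, $R,T$ denote the paper's continuity and stress defects, i.e.\ your $M_q,R_q$.

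Your perturbation $\sum_ka_k^2\Theta_k\ge0$ has the non-oscillating local mean $\sum_ka_k^2\fint\Theta_k$. Its continuity error $\fint\Theta_k\,(\partial_t+u_q\cdot\nabla)(a_k^2)$ gains nothing from an inverse divergence, so the new continuity defect does not decrease. A small global mass/moment correction cannot remove it. Nor can a time-independent one match a time-dependent mass, since $\int\D M_{q+1}=0$ for compactly supported $M_{q+1}$. You are therefore forced, as in the paper, to subtract the local mean, $\theta_p=\sum_ka_k(\Theta_k-1)$. The only bound is then $\theta_p\ge-\sum_ka_k$, so the density lost at each step is controlled only by the \emph{pointwise} size of the current defects.

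$L^1$ or $L^p$ control of the defects gives no summable lower bound, and the pointwise defects really are large. The new stress contains $-(\nabla w+\nabla w^T)$, as large as frequency times concentration times velocity amplitude, including on the cutoff regions of $w$ where the density bumps vanish. In your scheme the continuity defect also contains the concentrated product $\theta u_q$.

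The missing idea is to propagate the pointwise bound $\rho_n\ge\rho_*-\delta_0+\delta_n^2|R_n|+\delta_n^4|T_n|+2\sum_{j\ge n}\delta_j^2$. The paper achieves this with the stationary corrector $\theta_d=\delta_{n+1}^{16}\eta_d\sum_m\lambda^{-m/2}\chi_m\bigl(\Psi_\mu(\lambda x)-c_m\bigr)$. This is a concentrated profile along the whole lines swept by the moving bumps, with bounded $L^p$ norm, and it dominates $\delta_{n+1}^2|R^+|+\delta_{n+1}^4|T^+|$ pointwise (Lemma~\ref{lem:pointwise}). The nested cutoffs are needed to cover $\Ro\partial_t\theta_d$. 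Your ``stationary corrector'' is only a smooth mean/moment fix and does none of this.

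\textbf{Two other steps fail as written.} First, your relaxed system cannot be initialized. With $\partial_t(\rho u)$ on the left and a compactly supported stress, $\int(\rho_qu_q-\bar\rho\bar u)$ must vanish for all $t$. But \eqref{eq:moments} only gives $\int(\rho_*u_*-\bar\rho\bar u)=\int M_0$, which is in general nonzero and time dependent. For example, take $\bar\rho=1$, $\bar u=0$, $\rho_*=1+\eta(t)\phi$, $u_*=\eta(t)v$ with $\int\phi v\ne0$, so that $\int\rho_*u_*=\eta^2\int\phi v$. The remedy is the paper's modified momentum, $\partial_t(\rho u-M)$ in your notation, as in \eqref{eq:defect}. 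Then $\int(\rho_*u_*-M_0-\bar\rho\bar u)=\frac{\mathrm d}{\mathrm dt}\int(x-x_0)(\rho_*-\bar\rho)=0$, and a curl correction removes the angular momentum.

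Second, profiles constant along stationary tubes fail in three ways.
\begin{itemize}
\item They are not transported by $u_q+w$, since $u_q\cdot\nabla\Theta_k$ is of high frequency.
\item They give only two-dimensional concentration, so $\|w\|_{H^s}$ is small only for $s<1$, not $s<3/2$.
\item The low-frequency parts of $\theta w$, $\theta w\otimes w$, $\rho_qw\otimes w$, $\theta(u_q\otimes w+w\otimes u_q)$ and $\theta u_q\otimes u_q$ carry different powers of $a_k$. You do not show they can be matched to $(-M_q,-R_q)$ with nonnegative weights.
\end{itemize}
The paper resolves all of this at once. Three-dimensionally concentrated bumps move with constant velocity $V_k=\omega k$, and $w$, the curl of a cut-off local potential of $V_k-u$, satisfies $u+w=V_k$ exactly on $\Supp(a_k\Theta_k)$. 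Then $\theta_p$ is transported exactly, leaving only $(\Theta_k-1)(\partial_t+V_k\cdot\nabla)a_k$; this also removes the time-derivative difficulty you single out as the main obstacle. Both cancellations become the linear moment identities $\sum_ka_k(V_k-u)=-R$ and $\sum_ka_k(V_k\otimes V_k-u\otimes u)=\pi\Id-T$, solvable with small $a_k\ge0$ because $\omega$ is large.
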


The exponent $p_*(s)$ is explicit; see Section~\ref{sec:parameters}.
It tends to one as $s\uparrow3/2$, because the concentration needed to
make the velocity perturbation small in $H^s$ grows as $s$ approaches
$3/2$, and we have made no attempt to optimize it. The restriction
$r_B<1/4$ is only used to identify $B$ with a Euclidean ball.

The prescribed fields need not satisfy either evolution equation
inside $Q$. Applying the theorem to distinct smooth perturbations
gives the following consequence.

\begin{cor}[Compactly supported nonuniqueness]\label{cor:nonunique}
Let $s,p,(\bar\rho,\bar u,\bar\Pi)$ and $Q$ be as in
Theorem~\ref{thm:main}. There are infinitely many
distinct weak solutions which coincide with $(\bar\rho,\bar u,\bar\Pi)$
outside compact subsets of $Q$, converge to $(\bar\rho,\bar u)$ in
$C_tL^p\times C_tH^s$, and satisfy $\rho\ge\frac12\min\bar\rho$.
In particular, there are infinitely many weak solutions with
nonconstant density such that $\rho-1$, $u$ and $\Pi$ vanish outside a
compact subset of $Q$.
\end{cor}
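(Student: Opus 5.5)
The corollary should follow from Theorem~\ref{thm:main} applied to a sequence of distinct smooth prescribed pairs $(\rho_*^{(k)},u_*^{(k)})$ that converge to $(\bar\rho,\bar u)$ and satisfy the hypotheses. I would build these pairs explicitly. Fix a smaller cylinder $Q'=I'\times B'\Subset Q$ with $B'\Subset B$ concentric. For the velocity, take a nonzero smooth divergence-free field $w=\curl\psi$ with $\psi\in C_c^\infty(Q')$. For the density, take $\phi\in C_c^\infty(Q')$ whose spatial zeroth and first moments about $x_0$ vanish for every $t$. One concrete choice is $\phi=\partial_{x_1}^2\chi$ with $\chi\in C_c^\infty(Q')$ not identically zero. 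Integrating by parts, $\int_B\phi\dd x=0$ and $\int_B(x-x_0)\phi\dd x=0$, because two derivatives kill all polynomials of degree at most one. Then set
\[
\rho_*^{(k)}=\bar\rho+\varepsilon_k\phi,\qquad u_*^{(k)}=\bar u+\varepsilon_k w,
\]
with $\varepsilon_k\downarrow0$ distinct and $\varepsilon_k\|\phi\|_{L^\infty}\le\frac18\min\bar\rho$. Then $\rho_*^{(k)}>0$, $\D u_*^{(k)}=0$, the supports of the differences lie in $Q'\Subset Q$, and \eqref{eq:moments} holds.

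Next apply Theorem~\ref{thm:main} to each pair with a tolerance $\delta_k$. I would require $\delta_k\le\frac18\min\bar\rho$, $\delta_k\to0$, and
\[
\delta_k<\tfrac13\varepsilon_k\,\min\bigl(\|\phi\|_{C_tL^p},\|w\|_{C_tH^s}\bigr)\cdot\min_{j\neq k}|\varepsilon_j-\varepsilon_k|/\varepsilon_k .
\]
Since the $\varepsilon_j$ accumulate only at $0$, this minimum gap is positive for each $k$. The resulting solutions $(\rho_k,u_k,\Pi_k)$ have the following properties.
\begin{itemize}
\item They agree with $(\bar\rho,\bar u,\bar\Pi)$ outside a compact subset of $Q$.
\item They satisfy $\rho_k\ge\bar\rho+\varepsilon_k\phi-\delta_k\ge\frac12\min\bar\rho$.
\item They converge to $(\bar\rho,\bar u)$, since $\|\rho_k-\bar\rho\|_{C_tL^p}\le\delta_k+\varepsilon_k\|\phi\|_{C_tL^p}\to0$, and similarly for $u_k$ in $C_tH^s$.
\end{itemize}
Distinctness follows from the triangle inequality. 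For $j\ne k$,
\[
\|u_j-u_k\|_{C_tH^s}\ge|\varepsilon_j-\varepsilon_k|\,\|w\|_{C_tH^s}-\delta_j-\delta_k .
\]
Choosing $\delta_k$ small relative to the gaps to both neighbours makes this positive. A simpler alternative is to pass to a subsequence with $\delta_j+\delta_k<|\varepsilon_j-\varepsilon_k|\,\|w\|$. Either way there are infinitely many distinct solutions.

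For the final claim, take $\bar\rho\equiv1$, $\bar u\equiv0$, $\bar\Pi\equiv0$, which is a smooth solution of \eqref{eq:INS}. Then $\rho_k-1$, $u_k$ and $\Pi_k$ vanish outside a compact subset of $Q$. It remains to show that $\rho_k$ is nonconstant. Take $\delta_k<\frac12\varepsilon_k\|\phi\|_{C_tL^p}$. Then $\|\rho_k-1\|_{C_tL^p}\ge\varepsilon_k\|\phi\|_{C_tL^p}-\delta_k>0$, so $\rho_k\not\equiv1$. On the other hand, $\rho_k=1$ on an open region outside $Q$, so $\rho_k$ cannot be a different constant either.

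The only slightly delicate point is building a density perturbation with vanishing mass and first moments, and the second-derivative construction above disposes of it. Everything else is bookkeeping of the tolerances $\delta_k$.
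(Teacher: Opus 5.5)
Your proposal is correct and follows essentially the paper's route. Both arguments apply Theorem~\ref{thm:main} to a sequence of prescribed densities $\bar\rho+\varepsilon_k\phi$, where $\phi$ is a second derivative of a compactly supported function (you use $\partial_{x_1}^2\chi$, the paper uses $\eta_0(t)\Delta g/\|\Delta g\|_\infty$), so that \eqref{eq:moments} holds automatically, and both take tolerances small relative to the gaps between amplitudes.

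The only difference is bookkeeping. You also perturb the velocity and separate the solutions by the triangle inequality in $C_tH^s$. The paper keeps $u_*=\bar u$ and separates them by the observable $\int(\rho_j(t_*)-\bar\rho(t_*))f\dd x$; with geometric amplitudes its ranges are disjoint, and since $\int f=0$ it also shows directly that $\rho_j(t_*)$ is not spatially constant.
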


\begin{remark}[Initial data and other domains]\label{rem:domains}
The solutions of Corollary~\ref{cor:nonunique} coincide with
$(\bar\rho,\bar u,\bar\Pi)$ for all times before the time interval of
$Q$. They are therefore infinitely many distinct weak solutions of the
Cauchy problem with the same smooth initial data
$(\bar\rho(0),\bar u(0))$; for the rest state these are $\rho_0=1$ and
$u_0=0$. Moreover, the equations are local and the perturbation is
supported in a compact subset of $Q$. Hence the same fields are weak
solutions on any domain $\Omega\supset B$, in particular on $\RR^3$
and on bounded domains with whatever boundary condition the reference
solution satisfies, and the nonuniqueness statement holds there
verbatim.
\end{remark}

\begin{remark}[Failure of renormalization]\label{rem:renormalization}
Let $(\rho,u,\Pi)$ be one of the solutions in the last assertion of
Corollary~\ref{cor:nonunique}, and let
$\Phi(z)=\sqrt{1+(z-1)^2}-1$. Then $\Phi\ge0$, $\Phi(z)=0$ only for
$z=1$, $|\Phi'|\le1$ and $\Phi(z)\le|z-1|$. Hence
$\Phi(\rho)\in C_tL^1$ and
$|\Phi(\rho)u|\le(\rho+1)|u|\in L^1_{t,x}$, since $\rho u\in C_tL^1$
and $u\in C_tL^2$. If the renormalized equation
$\partial_t\Phi(\rho)+\D(\Phi(\rho)u)=0$ held in
$\mathcal D'((0,1)\times\TT^3)$, testing with functions of time only
would show that $t\mapsto\int\Phi(\rho(t))\dd x$ is constant. It
vanishes for $t$ before the time interval of $Q$, where $\rho=1$.
Hence $\Phi(\rho(t))=0$ and $\rho(t)=1$ for every $t$, contradicting
the nonconstancy of $\rho$.
This is consistent with the DiPerna--Lions theory \cite{DPL89,Amb04},
which guarantees renormalization for $\rho\in L^p$ and
$u\in W^{1,p'}$ with $1/p+1/p'=1$. Here $u$ has at most one derivative
in $L^2$, far from the dual exponent $p'$ required when $p$ is close to
one, and the pair of exponents lies in the nonuniqueness range for the
transport equation found by Modena and
Sz\'ekelyhidi~\cite{MSz18,MSz19}. The point of the present result is
that positive density, the momentum equation and viscosity are
compatible with this phenomenon.
\end{remark}

\begin{remark}[The spatial moment conditions]\label{rem:necessity}
Conditions \eqref{eq:moments} from Theorem~\ref{thm:main} cannot be
dropped. Let $(\rho,u,\Pi)$ be any weak solution with
$(\rho,u,\Pi)=(\bar\rho,\bar u,\bar\Pi)$ outside a compact set
$K\subset Q$, and let $\chi_B\in C^\infty_c(B)$ equal one on the
spatial projection of $K$. Test the continuity equations of both
solutions with $\psi(t)\chi_B$ and with $\psi(t)(x-x_0)_i\chi_B$, and
the momentum equations with $\psi(t)\bfe_i\chi_B$, where
$\psi\in C^\infty_c((0,1))$. All terms containing $\nabla\chi_B$
cancel, because the two solutions agree there. In
$\mathcal D'((0,1))$ this gives
\[
\begin{aligned}
\frac{\mathrm d}{\mathrm dt}\int(\rho-\bar\rho)\chi_B&=0,\\
\frac{\mathrm d}{\mathrm dt}\int(x-x_0)(\rho-\bar\rho)\chi_B
&=\int(\rho u-\bar\rho\bar u)\chi_B,\\
\frac{\mathrm d}{\mathrm dt}\int(\rho u-\bar\rho\bar u)\chi_B&=0.
\end{aligned}
\]
The three quantities are continuous in time and vanish for $t$ before
the time interval of $Q$. Hence the mass and the first moment of
$\rho-\bar\rho$ in $B$ vanish for every $t$. These two quantities are
continuous with respect to $C_tL^1$ convergence of $\rho$. A smooth pair
$(\rho_*,u_*)$ violating \eqref{eq:moments} at some time is therefore
at positive $C_tL^1$ distance from the densities of all such solutions,
and cannot be approximated as in Theorem~\ref{thm:main}.
\end{remark}

\begin{remark}[Localization in time]\label{rem:time-only}
If $B$ is replaced by $\TT^3$, the same scheme works with the periodic
operators $\Ro$, $\Rs$ in place of the compactly supported ones of
Section~\ref{sec:compact}. The correction $\theta_c$ then only removes
the spatial mean of the density increment, and no angular momentum
correction is needed. This gives approximation of prescribed fields
which coincide with different exact solutions before and after the
time interval, provided these solutions have the same mass and the same
momentum; in particular, two smooth solutions with equal total mass and
momentum can be glued across a prescribed time interval. Since the
argument is a simplification of the one given here, we do not repeat
it.
\end{remark}

\subsection{Main ideas of the proof}

We construct smooth approximate solutions with a continuity defect
$R$ and a symmetric stress $T$:
\begin{equation}\label{eq:intro-approximate}
\left\{
\begin{aligned}
\partial_t\rho+\D(\rho u)&=\D R,\\
\partial_t(\rho u-R)+\D(\rho u\otimes u)+\nabla\Pi-\Delta u&=\D T,\\
\D u&=0.
\end{aligned}
\right.
\end{equation}
At each step we reduce $R,T$, keeping the perturbation compactly
supported and the density bounded below. The term $-R$ in the momentum
variable is important: it lets the same density perturbation correct
both equations.

\subsubsection*{The principal perturbations}

Let $\Theta_k\ge0$ have unit spatial mean and move with velocity
$V_k=\omega k$. We choose their supports disjoint and construct
a divergence-free perturbation $w$ satisfying
\[
\partial_t\Theta_k+V_k\cdot\nabla\Theta_k=0,
\qquad
u+w=V_k\quad\text{on }\Supp(a_k\Theta_k).
\]
The principal density perturbation is
\[
\theta_p=\sum_k a_k(\Theta_k-1),\qquad a_k\ge0.
\]
The transport of each profile is exact, so its remaining error contains
only derivatives of the coefficient:
\[
\begin{aligned}
&\partial_t[a_k(\Theta_k-1)]
+\D[a_k(\Theta_k-1)V_k]\\
&\qquad=(\Theta_k-1)(\partial_t+V_k\cdot\nabla)a_k.
\end{aligned}
\]
This is the reason for moving the profiles. A stationary bump would
leave the full time derivative of $a_k\Theta_k$ as an error, whereas a
bump moving with the constant velocity $V_k$ is transported exactly
once $u+w=V_k$ on its support.

We choose the coefficients to satisfy
\begin{equation}\label{eq:intro-cancellation}
\begin{aligned}
\sum_k a_k(V_k-u)&=-R,\\
\sum_k a_k(V_k\otimes V_k-u\otimes u)&=\pi\Id-T.
\end{aligned}
\end{equation}
The first identity cancels the continuity defect; the second cancels
the stress after a pressure correction. Opposite coordinate directions
give the first identity, while the geometric lemma supplies the second.
Taking $\omega$ large makes the nonnegative coefficients small.

\subsubsection*{Concentration and the regularity range}

At frequency $\lambda$ and concentration $\mu$, the profiles and
velocity perturbation satisfy
\[
\|\Theta_k\|_{L^p}\lesssim\mu^{3-3/p},\qquad
\|w\|_{H^s}\lesssim\omega\lambda^s\mu^{s-3/2}.
\]
Choosing $\mu=\lambda^N$, with
$N(3/2-s)>s+2$, makes the velocity perturbation small for
$s<3/2$. Concentration increases the density norm, but
$3-3/p\to0$ as $p\downarrow1$. Thus $p$ can be chosen so that the
small coefficients in $\theta_p$ compensate for this increase. This
explains why the admissible range of $p$ depends on $s$.

\subsubsection*{The density lower bound}

Proposition~\ref{prop:main} requires a positive density lower bound
throughout the iteration. The principal perturbation can decrease
the density wherever its positive bumps vanish. With
$\omega=\delta_n^{-4}$, the coefficient bound gives
\[
\theta_p\ge-\sum_k a_k,\qquad
\sum_k a_k\le\delta_n^2|R|+\delta_n^4|T|+C\delta_n^{12}.
\]
Here $\delta_n$ is the small parameter at step $n$; the weights
$\delta_n^2,\delta_n^4$ bound the possible density loss when
correcting $R,T$.

At the next stage the same subtraction is determined by $R^+,T^+$.
In particular, $T^+$ contains $-(\nabla w+\nabla w^T)$, which can
be large on the velocity cutoff regions where the density bumps
vanish. Small $L^p$ norms of the new defects therefore do not give
a small pointwise density loss at the following stage.

We add a stationary density corrector $\theta_d$ along the lines
traversed by the moving bumps. Its profile has bounded $L^p$ norm
and dominates these new errors pointwise. For the total perturbation
$\theta=\theta_p+\theta_c+\theta_d$, including the small spatial
moment correction $\theta_c$, we prove
\begin{equation}\label{eq:intro-positivity}
\theta\ge-\sum_k a_k
+\delta_{n+1}^2|R^+|+\delta_{n+1}^4|T^+|
-C\delta_{n+1}^{16}.
\end{equation}
The positive defect terms cover the subtraction when $R^+,T^+$
are corrected at the next step. The remaining losses are summable,
giving the density lower bound in the iteration proposition.

The profile is stationary so that its time derivative falls only on
the time cutoff. The resulting error $\Ro\partial_t\theta_d$ also
has to be covered. We use successive spatial cutoffs with decreasing
coefficients: each supplies density where the preceding inverse
divergence extends beyond its source.

\subsubsection*{Compact support}

For a symmetric solution of $\D S=f$ compactly supported in $B$,
the necessary compatibility conditions are
\[
\int f=0,\qquad \int x\times f=0.
\]
These are the vanishing linear and angular momentum conditions used
in localized convex integration; see Isett--Oh~\cite{IO16} and Zhao~\cite{Zhao}.
We arrange them before applying the inverse divergence operators.
The density increment $\theta$ is corrected so that
\[
\int\theta=0,\qquad\int x\theta=0.
\]
Writing $m=\rho u-R$, the modified momentum increment satisfies
$\partial_t\theta+\D(m^+-m)=0$, hence
\[
\int(m^+-m)=-\int x\D(m^+-m)
=\partial_t\int x\theta=0.
\]
A compactly supported curl removes its angular momentum without
changing this divergence. The resulting momentum residual then has
a compactly supported symmetric inverse divergence.

Finally, the approximate equations are linear in
$\rho$, $u$, $\Pi$, $\rho u-R$ and $\rho u\otimes u-T$.
Mollifying their differences from the fixed approximate solution
therefore preserves the equations and only slightly enlarges the
compact support. This provides the derivative bounds needed to
continue the iteration.

\subsection{Related work}

Convex integration for fluid equations goes back to the
differential-inclusion formulation of De Lellis and
Sz\'ekelyhidi~\cite{DLS09}, which builds on the work of Nash, Kuiper
and Gromov~\cite{Nash54,Kuiper55,Gromov86}; see the
surveys~\cite{DLS17,BV19survey}. Continuous dissipative Euler
flows~\cite{DLS13}, the Mikado flows of Daneri and
Sz\'ekelyhidi~\cite{DS17} and Isett's proof of Onsager's
conjecture~\cite{Isett18} established the geometric framework used
below. Compact support in time was obtained by Isett~\cite{Isett17},
and compact support in space, subject to the linear and angular
momentum conditions that reappear in Section~\ref{sec:compact}, by
Isett and Oh~\cite{IO16}, see also ~\cite{Zhao}.

For the Navier--Stokes equations, Buckmaster and Vicol~\cite{BV19}
introduced intermittent building blocks whose concentration overcomes
viscosity. Sharp Sobolev nonuniqueness and temporal intermittency
were obtained by Cheskidov and Luo~\cite{CL22,CL23}, and partial
regularity in time in~\cite{BCV22}. These solutions lie outside the
Leray class; within it, nonuniqueness is known for the forced
equations~\cite{ABC22}. Our solutions have velocity in $H^s$ for any
prescribed $s<3/2$, but no energy inequality is imposed, and the
nonuniqueness is produced by the density.

For the transport equation with Sobolev velocity, nonuniqueness was
established by Modena and Sz\'ekelyhidi~\cite{MSz18,MSz19} in the
range complementary to the DiPerna--Lions theory~\cite{DPL89,Amb04}.
Moving profiles with full dimensional concentration appear
in~\cite{MS20}, positive profiles and their characteristic geometry
in~\cite{BCDL21}, critical constructions in~\cite{CL21}, and temporal
intermittency in~\cite{CL24}. Convex integration for the inhomogeneous
incompressible Euler equations was carried out by Giri and
Koley~\cite{GK24}.

Here the density and velocity are constructed together. The identities
\eqref{eq:intro-cancellation} cancel the continuity defect and the stress
using the same nonnegative coefficients, and the stationary density
corrector gives \eqref{eq:intro-positivity}. For spatial localization,
we use the Bogovskii operator~\cite{Bog79,Galdi} and solve the symmetric
divergence equation subject to the linear and angular momentum
conditions, as in~\cite{IO16}.

\subsection{Organization of the paper}

Section~\ref{sec:parameters} formulates the iteration and deduces the
main results. Section~\ref{sec:tools} collects the auxiliary estimates
and constructs the stationary profile. We construct one step in
Section~\ref{sec:construction} and prove its estimates in
Section~\ref{sec:estimates}. Section~\ref{sec:induction} completes the
iteration. Only the elementary cutoff, calculus, and kernel arguments are
collected in Appendix~\ref{app:elementary}.

\subsection{Notation}

All spatial norms and integrals are over $\TT^3$ unless a domain is
specified. We write $C_tX=C([0,1];X)$, and spatial norms of
time-dependent functions are taken uniformly in time. For a nonnegative integer $m$, we set
\[
\|f\|_{C^m_{t,x}}
=\max_{j+|\alpha|\le m}\|\partial_t^j\partial_x^\alpha f\|_\infty.
\]
We use the same notation for scalar, vector, and matrix fields, with
the Euclidean norm in the range; a norm of a tuple is the sum of the
norms of its entries. The letter $p$ denotes the density exponent,
and $q$ is used for auxiliary exponents.

We denote the coordinate unit vectors by $\bfe_1,\bfe_2,\bfe_3$
and the space of symmetric $3\times3$ real matrices by
$\mathrm{Sym}_3$. For vectors and matrices we use the conventions
\[
(v\otimes w)_{ij}=v_iw_j,\qquad
(\nabla v)_{ij}=\partial_jv_i,\qquad
(\D S)_i=\sum_j\partial_jS_{ij}.
\]
We also write $(S\nabla\chi)_i=\sum_jS_{ij}\partial_j\chi$.
In component calculations, repeated indices are summed from one to
three, and $\varepsilon_{ijk}$ denotes the alternating symbol.
The operators $\nabla$, $\Delta$, $\D$ and $\curl$ act only in space,
and $|\cdot|$ is the Euclidean norm, including for matrices. We set
\[
\fint f=\int_{\TT^3}f\dd x,\qquad \Pne f=f-\fint f,
\]
and inverse Laplacians act on mean-zero functions.

\subsection*{Acknowledgements}
XL would like to thank Xiaoyutao Luo for fruitful discussions and for his invitation to visit the Academy of Mathematics and Systems Science, Chinese Academy of Sciences.  XL is supported by PRIN 2022 ``Turbulent
effects vs Stability in Equations from Oceanography'' (TESEO), project
number 2022HSSYPN. 
\section*{Declaration on the use of AI tools}
During the preparation of this work the author used Chatgpt 6 and Claude as research assistants. The strategy for spatial localization in
Section~\ref{sec:compact} was organized by Claude on the basis of the ideas in \cite{Zhao}. The models were also used to check
algebraic identities and to improve the exposition.

All mathematical arguments and conclusions in that lemma were independently
checked and verified by the author, who assumes full responsibility for its content.

\section{The iteration and proof of the main results}
\label{sec:parameters}

We first extend the prescribed fields to a smooth approximate solution
with compactly supported errors. Proposition~\ref{prop:main} reduces
these errors to zero with summable increments, while preserving
positivity and a common compact support. We then deduce the main
results from this proposition.

\subsection{The approximate system}

We introduce defects in both evolution equations. The continuity
defect is included in the momentum variable, so that both equations
remain linear in the density, modified momentum and modified flux.
This form will be used both for the principal cancellation and for
regularization.

\begin{defin}[Approximate solutions]\label{def:system}
A smooth solution of the approximate system is a quintuple
$Z=(\rho,u,\Pi,R,T)$ on $[0,1]\times\TT^3$, where $\rho,\Pi$ are
smooth scalar functions, $u,R$ smooth $\RR^3$-valued fields, and $T$ a
smooth $\mathrm{Sym}_3$-valued field, satisfying
\begin{equation}\label{eq:defect}
\left\{
\begin{aligned}
\partial_t\rho+\D(\rho u)&=\D R,\\
\partial_t(\rho u-R)+\D(\rho u\otimes u)+\nabla\Pi-\Delta u&=\D T,\\
\D u&=0.
\end{aligned}
\right.
\end{equation}

Define the smooth vector field $m$ and symmetric matrix field $\Phi$ by
\begin{equation}\label{eq:modified-variables}
\begin{aligned}
m(t,x)&=\rho(t,x)u(t,x)-R(t,x),\\
\Phi(t,x)&=\rho(t,x)u(t,x)\otimes u(t,x)-T(t,x).
\end{aligned}
\end{equation}
Then \eqref{eq:defect} becomes
\begin{equation}\label{eq:linear-defect}
\left\{
\begin{aligned}
\partial_t\rho+\D m&=0,\\
\partial_tm+\D\Phi+\nabla\Pi-\Delta u&=0,\\
\D u&=0.
\end{aligned}
\right.
\end{equation}
\end{defin}

\subsection{Parameters}

Given $0<s<3/2$, fix a small exponent $0<\gamma<10^{-3}$ and a
sufficiently large integer $N$ such that
\[
N\ge\gamma^{-1},\qquad N(3/2-s)>s+2.
\]
Set $b=N^8$ and take the density exponent in the range
\[
1<p<p_*(s),\qquad p_*(s)=1+\frac{\gamma}{N^{10}}.
\]

For an integer $a\ge2$, define the frequencies and error sizes by
\begin{equation}\label{eq:iteration-scales}
\lambda_n=a^{b^n},\qquad \delta_n=\lambda_n^{-\gamma}
\qquad(n\ge0).
\end{equation}
The exponents $\gamma,N,b,p$ remain fixed. We take $a>0$ sufficiently
large, depending on the prescribed fields, and enlarge only $a$ in the
proof.

\subsection{Initial defects}

Write $Q=J\times B$ and $B=B(x_0,r_B)$, where $J\Subset(0,1)$ is
an open interval and $r_B<1/4$. We first place the prescribed pair in
the approximate system. The moment assumptions allow us to choose
both defects compactly supported.

\begin{lem}[Local extension]\label{lem:ext}
Under the hypotheses of Theorem~\ref{thm:main} there are smooth fields
$R_*$ and $T_*=T_*^T$, supported in a compact set $K_*\subset Q$, such
that $Z_*=(\rho_*,u_*,\bar\Pi,R_*,T_*)$ solves \eqref{eq:defect} from Definition~\ref{def:system}, and
$Z_*=(\bar\rho,\bar u,\bar\Pi,0,0)$ outside $K_*$.
\end{lem}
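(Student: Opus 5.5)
We want smooth $R_*$ and symmetric $T_*$, compactly supported in $Q$, such that
\[
\partial_t\rho_*+\D(\rho_*u_*)=\D R_*,\qquad
\partial_t(\rho_*u_*-R_*)+\D(\rho_*u_*\otimes u_*)+\nabla\bar\Pi-\Delta u_*=\D T_*.
\]
The plan is to subtract the equations satisfied by $(\bar\rho,\bar u,\bar\Pi)$ and reduce everything to two compactly supported inverse divergence problems. Set
\[
f=\partial_t\rho_*+\D(\rho_*u_*)
=\partial_t(\rho_*-\bar\rho)+\D(\rho_*u_*-\bar\rho\bar u).
\]
This equality holds because $\bar\rho$ solves the continuity equation. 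Since $\rho_*-\bar\rho$ and $u_*-\bar u$ are supported in a compact subset $K_0\subset Q$, so is $f$. It suffices to find a smooth vector field $R_*$ with $\D R_*=f$ and support in a compact subset of $Q$.

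For this I use a Bogovskii-type operator on $B$, applied at each fixed time; it depends smoothly on time because it is a fixed integral operator with smooth kernel in time. The operator requires $\int_B f\dd x=0$. Indeed,
\[
\int_B f\dd x=\frac{\mathrm d}{\mathrm dt}\int_B(\rho_*-\bar\rho)\dd x=0,
\]
since the divergence term integrates to zero against its compact support, and the last equality is the first condition in \eqref{eq:moments}. Choose a slightly larger ball $B'\Subset B$ containing the spatial projection of $K_0$. The Bogovskii operator on $B'$ then gives $R_*$ supported in $J'\times \overline{B'}$, where $J'\Subset J$ contains the time projection of $K_0$. Here $R_*$ vanishes at times where $f\equiv0$, because the operator is linear and time-local.

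Next set
\[
g=\partial_t(\rho_*u_*-R_*-\bar\rho\bar u)+\D(\rho_*u_*\otimes u_*-\bar\rho\bar u\otimes\bar u)-\Delta(u_*-\bar u).
\]
Using the momentum equation of $\bar Z$, we have $g=\D T_*$ for the required $T_*$, with the pressure kept equal to $\bar\Pi$. The field $g$ is smooth and compactly supported in $J'\times B'$. The compactly supported symmetric inverse divergence requires $\int g=0$ and $\int x\times g=0$. The angular condition is the main obstacle, since it is not automatic.

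For the linear momentum I compute, using $\D R_*=f$,
\[
\int g=\frac{\mathrm d}{\mathrm dt}\int(\rho_*u_*-\bar\rho\bar u-R_*)
=\frac{\mathrm d}{\mathrm dt}\int x\,\D(R_*-\rho_*u_*+\bar\rho\bar u).
\]
The right-hand side equals $\frac{\mathrm d}{\mathrm dt}\int x\,\big(\partial_t(\rho_*-\bar\rho)-f+f\big)$, and hence
\[
\int g=\frac{\mathrm d^2}{\mathrm dt^2}\int(x-x_0)(\rho_*-\bar\rho)=0
\]
by the second condition in \eqref{eq:moments}. For the angular momentum, $\int x\times g$ is the time derivative of $\int x\times(\rho_*u_*-\bar\rho\bar u-R_*)$, which need not vanish. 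I fix it as in the introduction: replace $R_*$ by $R_*+\curl A$ with $A$ smooth and compactly supported in $J'\times B'$. This leaves $\D R_*$ unchanged and shifts $\int x\times R_*$ by
\[
\int x\times\curl A=2\int A\quad\text{(up to sign)}.
\]
Choosing $A=\chi(x)\,c(t)$, with $\chi$ a fixed bump of unit mass and $c(t)$ smooth, cancels $\int x\times(\rho_*u_*-\bar\rho\bar u-R_*)$ at each time. The vector $c(t)$ vanishes outside $J'$ because the integrand does. The linear momentum argument is unchanged, since it only uses $\D R_*$.

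Finally I apply the symmetric compactly supported inverse divergence on $B'$, time by time, in the spirit of Isett--Oh and the operators announced for Section~\ref{sec:compact}. This can be constructed by a Bogovskii operator followed by a symmetrizing correction using the vanishing angular moment. It gives a smooth symmetric $T_*$ supported in $J'\times\overline{B'}$. Take $K_*=\overline{J'}\times\overline{B'}\subset Q$. Outside $K_*$ all differences vanish, so $Z_*=(\bar\rho,\bar u,\bar\Pi,0,0)$ there.
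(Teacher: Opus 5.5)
Your proposal is correct and follows essentially the same route as the paper. You apply a Bogovskii antidivergence to the continuity defect, then add a curl correction $\tfrac12\curl\bigl[\phi(x)\int(y-x_0)\times F^0(t,y)\,\mathrm dy\bigr]$ to remove the angular momentum of the modified momentum increment, and finally apply the compactly supported symmetric antidivergence of Lemma~\ref{lem:sym} to the momentum residual $g$. The only difference is cosmetic: you get $\int g=0$ as a second time derivative of the first moment, whereas the paper shows directly that the corrected increment $F_*$ has zero integral.
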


\begin{proof}
Let $K_0\subset Q$ be compact with $(\rho_*,u_*)=(\bar\rho,\bar u)$
outside $K_0$, let $J_0\Subset J$ be a compact interval and $r'<r_B$ a
radius with $K_0\subset J_0\times B(x_0,r')$, and put $U'=B(x_0,r')$.
Fix a scalar $\phi_\star\in C^\infty_c(U')$ with
$\int\phi_\star=1$.

\proofstep{Step 1: The continuity defect}
Since $\bar\rho$ solves the continuity equation,
\[
f:=\partial_t\rho_*+\D(\rho_*u_*)
 =\partial_t(\rho_*-\bar\rho)+\D(\rho_*u_*-\bar\rho\bar u)
\]
is supported in $K_0$, and $\int f\dd x=\frac{\mathrm d}{\mathrm dt}\int_B(\rho_*-\bar\rho)\dd x=0$
by Theorem~\ref{thm:main}. Lemma~\ref{lem:bog} gives the vector field
$R^{\mathrm b}:=\Bog_{U'}f\in C^\infty_c(U';\RR^3)$ with
$\D R^{\mathrm b}=f$, smooth in $t$ and vanishing for $t\notin J_0$.
Put
\[
\begin{aligned}
F^0&:=\rho_*u_*-\bar\rho\bar u-R^{\mathrm b},\\
D_*(t,x)&:=\tfrac12\curl_x\left[
\phi_\star(x)\int(y-x_0)\times F^0(t,y)\dd y\right],\\
F_*&:=F^0-D_*.
\end{aligned}
\]
These fields are smooth and supported in $J_0\times U'$. Moreover
$\D F_*=\D(\rho_*u_*-\bar\rho\bar u)-f=-\partial_t(\rho_*-\bar\rho)$.
By Lemma~\ref{lem:moment-identities} and Theorem~\ref{thm:main},
\[
\begin{aligned}
\int F_*\dd x
&=-\int(x-x_0)\D F_*\dd x\\
&=\frac{\mathrm d}{\mathrm dt}
\int(x-x_0)(\rho_*-\bar\rho)\dd x=0.
\end{aligned}
\]
The curl correction also gives
\[
\int(x-x_0)\times F_*\dd x
=\left(1-\int\phi_\star\dd x\right)
\int(x-x_0)\times F^0\dd x=0.
\]

\proofstep{Step 2: The momentum defect}
Let
\[
g:=\partial_tF_*+\D\bigl(\rho_*u_*\otimes u_*-\bar\rho\bar u\otimes\bar u\bigr)
 -\Delta(u_*-\bar u).
\]
It is smooth and supported in $J_0\times U'$. Since
$\Delta v=\D(\nabla v+\nabla v^T)$ for divergence-free $v$,
Lemma~\ref{lem:moment-identities} gives
\[
\int g=\partial_t\int F_*=0,\qquad
\int(x-x_0)\times g=\partial_t\int(x-x_0)\times F_*=0.
\]
Set
\[
R_*:=R^{\mathrm b}+D_*,\qquad T_*:=\Sym_{U'}g,
\]
with $\Sym_{U'}$ from Lemma~\ref{lem:sym}. Then $\D R_*=f$, which is the
continuity equation in \eqref{eq:defect}. Also
$\rho_*u_*-R_*=\bar\rho\bar u+F_*$, and the momentum equation for
$(\bar\rho,\bar u,\bar\Pi)$ gives
\[
\partial_t(\rho_*u_*-R_*)+\D(\rho_*u_*\otimes u_*)+\nabla\bar\Pi-\Delta u_*
=g=\D T_*.
\]
The fields $R_*,T_*$ are supported in
$K_*:=J_0\times\overline{U'}\subset Q$, and the prescribed fields
agree with the reference solution outside this set.
\end{proof}

From now on we fix $Z_*$ and $K_*$ as in Lemma~\ref{lem:ext}, and a
radius $r_0\in[r_B/2,r_B)$ with $K_*\subset J\times B(x_0,r_0)$.
Constants called $C_*$ may depend on $Z_*$.

\subsection{The iteration}

The iteration must do more than make the two defects vanish. It must
also preserve a common compact support, a positive lower bound for
the density, and strong convergence of the nonlinear terms. The next
proposition records precisely these conclusions.

\begin{prop}[Iteration]\label{prop:main}
Let $Z_*$ and $K_*$ be as in Lemma~\ref{lem:ext}. For sufficiently
large $a>0$, there are a compact set $K_\infty\subset Q$ containing
$K_*$ and smooth solutions
\[
Z_n=(\rho_n,u_n,\Pi_n,R_n,T_n),\qquad n\ge0,
\]
of \eqref{eq:defect} from Definition~\ref{def:system}, with $Z_0=Z_*$. For every $n$, both $Z_n-Z_*$
and $(R_n,T_n)$ are supported in $K_\infty$.

The density and velocity increments satisfy
\begin{equation}\label{eq:iteration-increments}
\|\rho_{n+1}-\rho_n\|_{C_tL^p}
+\|u_{n+1}-u_n\|_{C_tH^s}\le\delta_n,
\end{equation}
and the defects tend to zero according to
\begin{equation}\label{eq:iteration-defects}
\|(R_n,T_n)\|_{C_tL^p}\le\delta_n^8\qquad(n\ge1).
\end{equation}
The nonlinear quantities and pressure have summable increments:
for $n\ge1$,
\begin{equation}\label{eq:iteration-products}
\begin{aligned}
&\|\rho_{n+1}u_{n+1}-\rho_nu_n\|_{C_tL^1}\\
&\quad+\|\rho_{n+1}u_{n+1}\otimes u_{n+1}
-\rho_nu_n\otimes u_n\|_{C_tL^1}\\
&\quad+\|\Pi_{n+1}-\Pi_n\|_{L^1_{t,x}}\le\delta_n.
\end{aligned}
\end{equation}
Finally, the density retains the pointwise lower bound
\begin{equation}\label{eq:iteration-positivity}
\rho_n\ge\rho_*-\delta_0.
\end{equation}
\end{prop}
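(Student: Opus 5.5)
The plan is to prove Proposition~\ref{prop:main} by induction on $n$. At each step $Z_n\mapsto Z_{n+1}$ is produced by one convex integration step, and the induction hypothesis is strengthened so the scheme can continue. Besides \eqref{eq:iteration-increments}--\eqref{eq:iteration-positivity}, I would carry three further assumptions at stage $n$. First, derivative bounds $\|Z_n\|_{C^{m}_{t,x}}\le\lambda_n^{Cm}$ for $m\le m_0$. Second, a support property: $Z_n-Z_*$ and $(R_n,T_n)$ lie in $K_n$, where $K_n$ is an increasing sequence of compact subsets of $Q$ whose neighbourhood sizes $r_n$ are summable, so that $K_\infty=\overline{\bigcup K_n}\subset Q$. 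Third, a refined positivity statement modeled on \eqref{eq:intro-positivity}:
\[
\rho_n\ \ge\ \rho_*-\sum_{j<n}C\delta_j^{12}+\delta_n^2|R_n|+\delta_n^4|T_n|.
\]
The last two terms are a pointwise reserve of density, left by the previous step, that will pay for the subtraction $\sum_k a_k$ at the next step. For $n=0$ the reserve is not available. Since $a$ is large, however, $\delta_0^2|R_*|+\delta_0^4|T_*|\le\delta_0/2$, so the loss is absorbed directly into \eqref{eq:iteration-positivity}.

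The step itself runs as follows. First, mollify $Z_n$ at a scale $\ell\approx\lambda_{n+1}^{-1/2}$ or so. The mollification is applied to the differences from $Z_*$ in the linear variables $\rho,u,\Pi,m,\Phi$, so the system \eqref{eq:linear-defect} is preserved exactly and the support grows only by $\ell$. Second, solve \eqref{eq:intro-cancellation} for nonnegative coefficients $a_k$, using opposite coordinate directions together with the geometric lemma, with $\omega=\delta_n^{-4}$. This gives $\sum a_k\le\delta_n^2|R|+\delta_n^4|T|+C\delta_n^{12}$, with every $a_k$ supported where the defects live. Third, build the moving concentrated profiles $\Theta_k$ with frequency $\lambda_{n+1}$ and concentration $\mu=\lambda_{n+1}^N$, and the divergence-free velocity $w$ making $u+w=V_k$ on $\Supp(a_k\Theta_k)$. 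Then add the moment correction $\theta_c$ and the stationary corrector $\theta_d$.

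The new defects are defined through the compactly supported operators $\Bog_{U}$ and $\Sym_{U}$ of Lemmas~\ref{lem:bog} and~\ref{lem:sym}. First $\theta$ is corrected to have zero mass and first moment. Then the momentum residual has zero mean by the argument of the introduction, and a compactly supported curl removes its angular momentum, exactly as in Lemma~\ref{lem:ext}. This keeps all supports inside $K_{n+1}$.

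The estimates then follow the heuristics of the introduction. For the density, $\|\theta_p\|_{L^p}\lesssim\sum\|a_k\|\mu^{3-3/p}\le\delta_n^{2}\lambda_{n+1}^{3N(p-1)}\ll\delta_n$, using $p<1+\gamma N^{-10}$ and $b=N^8$. For the velocity, $\|w\|_{H^s}\lesssim\omega\lambda_{n+1}^s\mu^{s-3/2}\le\delta_n^{-4}\lambda_{n+1}^{-2}$, using $N(3/2-s)>s+2$. The new $R_{n+1},T_{n+1}$ collect several contributions: the transport errors $(\Theta_k-1)(\partial_t+V_k\cdot\nabla)a_k$, the oscillation and interaction errors, the viscous term $\nabla w$, the mollification commutators, and $\Ro\partial_t\theta_d$. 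Each is inverted at frequency $\lambda_{n+1}$ or gains from the concentration, which should give $\le\delta_{n+1}^8$ in $L^p$. The product and pressure increments in \eqref{eq:iteration-products} follow by Hölder, since $\rho_n u_n$ differences involve $\theta\in L^p$ against $w$ and $u_n$, which are bounded or lie in $L^{p'}$ after interpolation with the derivative bounds. The pressure comes from \eqref{eq:intro-cancellation} and $\Rs$-type terms.

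I expect the main obstacle to be the refined positivity inequality at step $n+1$. One must show that $\theta_d$ dominates $\delta_{n+1}^2|R_{n+1}|+\delta_{n+1}^4|T_{n+1}|$ pointwise while still having small $L^p$ norm, even though $T_{n+1}$ contains $-(\nabla w+\nabla w^T)$, which is large on the velocity cutoff regions where $\Theta_k=0$. My first attempt would be to place $\theta_d$ as a stationary positive profile on the tubes swept by the bumps, with height proportional to a pointwise majorant of $|\nabla w|$ and of the other error terms. Its own time-derivative error $\Ro\partial_t\theta_d$, which leaks outside the tubes, would be covered by a finite chain of nested cutoffs with geometrically decreasing coefficients, each covering the tail of the previous inverse divergence. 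The remaining loss is at most $C\delta_{n+1}^{16}$, and this is summable; that closes \eqref{eq:iteration-positivity}.
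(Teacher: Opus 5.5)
Your plan is essentially the paper's proof. It inducts on derivative bounds, nested supports, and the pointwise reserve $\delta_n^2|R_n|+\delta_n^4|T_n|$ plus a summable tail, and before each step it mollifies the differences from $Z_*$ in the linear variables. The paper propagates only $C^3$ bounds and therefore regularizes at the coarser scale $h_n=\lambda_n^{-N^3}$, which keeps the $C^{20}$ input norms a tiny power of $\lambda_{n+1}$. Your finer scale $\lambda_{n+1}^{-1/2}$ works only because you carry all derivatives used in the smallness estimates.

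Make two points explicit. First, carry $\|u_n\|_\infty\le\delta_n^{-1}$ in the induction; this keeps the weights $a_k$ nonnegative and gives $|u^+|\lesssim\omega$. Second, for the $n\ge1$ product and pressure increments, use $\|(R_n,T_n)\|_{L^p}\le\delta_n^8$ to get $\|\theta_p\|_{L^p}\lesssim\delta_n^{11}$ and $\|\pi\|_{L^1}\lesssim\delta_n^4$. Your crude bound $\|\theta\|_{L^p}\ll\delta_n$ does not beat the factors $|u^+|\lesssim\delta_n^{-4}$ in $\theta u^+$ and $\theta u^+\otimes u^+$, and this is why the statement excludes $n=0$.
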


The proposition is proved in Section~\ref{sec:induction}, using the
one-step Proposition~\ref{prop:step}.

\subsection{Proof of Theorem~\ref{thm:main}}

\begin{proof}[Proof of Theorem~\ref{thm:main}]
Let $Z_*$ be given by Lemma~\ref{lem:ext}, and apply
Proposition~\ref{prop:main} with $a$ so large that
$2\delta_0<\min\{\delta,\frac12\min\rho_*\}$.
By \eqref{eq:iteration-increments}, $\rho_n\to\rho$ in $C_tL^p$ and $u_n\to u$ in $C_tH^s$, with
\[
\|\rho-\rho_*\|_{C_tL^p}+\|u-u_*\|_{C_tH^s}\le\sum_{n\ge0}\delta_n
\le2\delta_0<\delta.
\]
By \eqref{eq:iteration-positivity}, $\rho\ge\rho_*-\delta_0\ge\rho_*-\delta$; in particular
$\rho_n,\rho\ge\frac34\min\rho_*$. The increments from $Z_0$ to $Z_1$ are finite by smoothness, and
\eqref{eq:iteration-products} makes all later increments summable.
Hence $\rho_nu_n$ and $\rho_nu_n\otimes u_n$ converge in $C_tL^1$ to
limits $m$ and $\Phi$, and $\Pi_n\to\Pi$ in $L^1_{t,x}$. By \eqref{eq:iteration-defects},
$R_n,T_n\to0$ in $C_tL^1$.

Fix $t$. Along a subsequence, $\rho_n(t)\to\rho(t)$, $u_n(t)\to u(t)$,
$\rho_nu_n(t)\to m(t)$ and $\rho_nu_n\otimes u_n(t)\to\Phi(t)$ almost
everywhere. Hence $m(t)=\rho u(t)$ and $\Phi(t)=\rho u\otimes u(t)$.
Since $t$ was arbitrary, $\rho u\in C_tL^1$, $\rho u\otimes u\in C_tL^1$,
and the convergence holds in $C_tL^1$.

For $\phi\in C^\infty_c((0,1)\times\TT^3)$ and
$\psi\in C^\infty_c((0,1)\times\TT^3;\RR^3)$, \eqref{eq:defect} gives
\begin{gather*}
\iint\rho_n\partial_t\phi+(\rho_nu_n-R_n)\cdot\nabla\phi\dd x\dd t=0,\\
\iint(\rho_nu_n-R_n)\cdot\partial_t\psi
 +(\rho_nu_n\otimes u_n-T_n):\nabla\psi
 +\Pi_n\D\psi+u_n\cdot\Delta\psi\dd x\dd t=0.
\end{gather*}
All terms converge, so $(\rho,u,\Pi)$ satisfies
Definition~\ref{def:weak-INS}. By the support property in
Proposition~\ref{prop:main} and Lemma~\ref{lem:ext}, $(\rho_n,u_n,\Pi_n)=(\bar\rho,\bar u,\bar\Pi)$
outside $K_\infty$ for every $n$; hence the same holds for the limit.
\end{proof}

\subsection{Proof of Corollary~\ref{cor:nonunique}}

\begin{proof}
We distinguish the solutions by a linear observable of the density.
Choose $t_*\in J$, $\eta_0\in C^\infty_c(J;[0,1])$ with
$\eta_0(t_*)=1$, and $g\in C^\infty_c(B)$ with
$\Delta g\not\equiv0$. Put
$f=\Delta g/\|\Delta g\|_\infty$. Then
$\| f\|_\infty=1$, and integration by parts gives
$\int f=0$ and $\int(x-x_0) f=0$. Let
$c_f=\| f\|_{L^2}^2\in(0,1)$ and
$\alpha_j=2^{-j-4}\min\{1,\min\bar\rho\}$ for $j\ge1$. The pairs
\[
\rho_{*,j}=\bar\rho+\alpha_j\eta_0(t) f(x),\qquad u_{*,j}=\bar u
\]
satisfy the hypotheses of Theorem~\ref{thm:main}. Apply it with
$\delta_j=\alpha_jc_f/4$. Since the $L^1$ norm on $\TT^3$ is
bounded by the $L^p$ norm,
\[
\int\bigl(\rho_j(t_*)-\bar\rho(t_*)\bigr) f\dd x
\in\bigl[\alpha_jc_f-\delta_j,\ \alpha_jc_f+\delta_j\bigr]
=\alpha_jc_f\bigl[\tfrac34,\tfrac54\bigr].
\]
These intervals are pairwise disjoint because $\alpha_{j+1}=\alpha_j/2$,
so the solutions are distinct. Moreover
$\rho_j\ge\bar\rho-\alpha_j-\delta_j\ge\frac12\min\bar\rho$ and
$\|\rho_j-\bar\rho\|_{C_tL^p}+\|u_j-\bar u\|_{C_tH^s}\le\alpha_j+\delta_j\to0$.
For $(\bar\rho,\bar u,\bar\Pi)=(1,0,0)$ the observable is nonzero, so
$\rho_j$ is not constant, while $\rho_j-1$, $u_j$ and $\Pi_j$ vanish
outside a compact subset of $Q$.
\end{proof}

\section{Preliminaries}\label{sec:tools}

We collect the inverse divergence estimates, the geometric lemma, and
the stationary density profile used in the perturbation. The periodic
operators give a gain in frequency; their localized versions preserve
compact support under the appropriate moment conditions. Constants
depend only on the fixed parameters and the quantities indicated in
each statement.

\subsection{Time cutoffs and calculus inequalities}

We use three nested time cutoffs. The first supports the density
weights, the second the velocity perturbation, and the third the
stationary density correction. Their separation makes the last cutoff
equal to one wherever the principal errors are present.

\begin{lem}[Time cutoffs]\label{lem:cutoffs}
Let $I=[t_-,t_+]\Subset J'\subset(0,1)$ be intervals, $J'$ open, and
$\tau=\dist(I,\partial J')$. There are
$\eta,\eta_u,\eta_d\in C_c^\infty(J';[0,1])$ such that
$\eta=1$ near $I$, $\eta_u=1$ near $\Supp\eta$, and $\eta_d=1$ near
$\Supp\eta_u$. For every integer $j\ge0$,
\begin{equation}\label{eq:time-cutoff-bounds}
\begin{aligned}
\|\eta\|_{C^j}+\|\eta_u\|_{C^j}+\|\eta_d\|_{C^j}&\le C_j\tau^{-j},\\
|\eta_d'|&\le C_N\tau^{-1}\eta_d^{1-1/(16N)}.
\end{aligned}
\end{equation}
\end{lem}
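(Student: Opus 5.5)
We build the three cutoffs by mollifying indicator functions of nested intervals. The only subtle point is the power bound on $\eta_d'$, and we obtain it by building $\eta_d$ as a high power of a standard cutoff.

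\textbf{Nested intervals.} Write $J'=(\alpha,\beta)$, so that $t_--\alpha\ge\tau$ and $\beta-t_+\ge\tau$. Define the intervals
\[
I_k=[t_--k\tau/8,\ t_++k\tau/8],\qquad k=1,\dots,7.
\]
Each $I_k$ is compactly contained in $J'$, and consecutive intervals are separated by $\tau/8$.

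\textbf{The cutoffs $\eta$ and $\eta_u$.} Fix a standard mollifier $\varphi$ supported in $[-1,1]$, and set $\varphi_\epsilon=\epsilon^{-1}\varphi(\cdot/\epsilon)$. Put
\[
\eta=\mathbf 1_{I_2}*\varphi_{\tau/16},\qquad \eta_u=\mathbf 1_{I_4}*\varphi_{\tau/16}.
\]
Then $\eta$ takes values in $[0,1]$, equals one on a neighbourhood of $I_1\supset I$, and is supported in $I_3$. Likewise $\eta_u$ equals one near $I_3\supset\Supp\eta$ and is supported in $I_5$. Each derivative falls on the mollifier, which gives $\|\partial_t^j\eta\|_\infty\le\|\varphi_{\tau/16}^{(j)}\|_{L^1}\le C_j\tau^{-j}$, and the same bound for $\eta_u$.

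\textbf{The cutoff $\eta_d$.} Take $\zeta=\mathbf 1_{I_6}*\varphi_{\tau/16}$, which equals one near $I_5\supset\Supp\eta_u$, is supported in $I_7\Subset J'$, and satisfies $\|\zeta\|_{C^j}\le C_j\tau^{-j}$. Set $M=16N$ and define
\[
\eta_d=\zeta^M.
\]
This function takes values in $[0,1]$, equals one where $\zeta=1$, and has the same support as $\zeta$. By the Leibniz and Faà di Bruno formulas, $\|\eta_d\|_{C^j}\le C_{j,N}\tau^{-j}$, where the constant depends on $N$; since $N$ is a fixed parameter, this is allowed.

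\textbf{The power bound on $\eta_d'$.} The chain rule gives
\[
\eta_d'=M\zeta^{M-1}\zeta'.
\]
Since $\zeta^{M-1}=\eta_d^{(M-1)/M}=\eta_d^{1-1/(16N)}$, we get
\[
|\eta_d'|\le M\|\zeta'\|_\infty\,\eta_d^{1-1/(16N)}\le C_N\tau^{-1}\eta_d^{1-1/(16N)}.
\]
This is the only step needing care. A generic mollified cutoff vanishes to finite order at the edge of its support, but its derivative need not be controlled by a power of the cutoff itself. Taking $\eta_d$ to be the $M$-th power of another cutoff makes the estimate exact via the chain rule, so no flatness analysis at the boundary is required.
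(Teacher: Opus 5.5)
Your proof is correct, but you reach the key estimate by a different route from the paper.

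The paper builds all three cutoffs from the explicit profile $\eta_0(t)=e^{-1/t}/(e^{-1/t}+e^{-1/(1-t)})$, taking products of two rescaled copies placed at the endpoints of $I$. It proves $|\eta_0'|\le C_N\eta_0^{1-1/(16N)}$ from the identity $\eta_0'=\eta_0(1-\eta_0)(t^{-2}+(1-t)^{-2})$ together with the exponential flatness $\eta_0\le e^2e^{-1/t}$ near $t=0$, which makes $t^{-2}\eta_0^{1/(16N)}$ bounded. It then transfers the bound to the product via $f\le f^{1-1/(16N)}$ for $0\le f\le 1$.

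You instead use arbitrary mollified indicators and set $\eta_d=\zeta^{16N}$. Then $\eta_d'=16N\zeta^{16N-1}\zeta'$ and $\zeta^{16N-1}=\eta_d^{1-1/(16N)}$, so the power bound becomes an identity.

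Your argument is shorter and needs no analysis of how the cutoff behaves at the edge of its support. The price is that the $C^j$ constants of $\eta_d$ depend on $N$. This is harmless, because $N$ is a fixed parameter and all constants in the paper may depend on it. The paper's profile, by contrast, satisfies $|\eta_0'|\le C_\varepsilon\eta_0^{1-\varepsilon}$ for every $\varepsilon>0$ at once, with $N$-independent $C^j$ bounds.

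Two small remarks:
\begin{itemize}
\item Passing from $\|\partial_t^j\eta\|_\infty\le C_j\tau^{-j}$ to the full $C^j$ norm uses $\tau<1$. This holds because $J'\subset(0,1)$.
\item Your closing comment is slightly off. An indicator mollified by a $C^\infty_c$ kernel vanishes to infinite, not finite, order at the edge of its support. The real point is that flatness alone does not give the power bound: the paper extracts it from the specific $e^{-1/t}$ structure, and you obtain it from the power trick.
\end{itemize}
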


The proof is given in Appendix~\ref{app:elementary}.

\begin{lem}[Calculus inequalities]\label{lem:calculus}
Let $j\ge0$ be an integer. For smooth scalar functions $f,g$ on
$[0,1]\times\TT^3$,
\[
\|fg\|_{C^j_{t,x}}\le2^j\|f\|_{C^j_{t,x}}\|g\|_{C^j_{t,x}}.
\]
Let $\mathcal K\subset\RR^m$ be compact, let $\Gamma$ be a smooth
scalar function on a neighbourhood of $\mathcal K$, and let
$v:[0,1]\times\TT^3\to\mathcal K$ be smooth. Then
\[
\|\Gamma\circ v\|_{C^j_{t,x}}\le C\bigl(j,m,\|\Gamma\|_{C^j(\mathcal K)}\bigr)
\bigl(1+\|v\|_{C^j_{t,x}}\bigr)^j.
\]
If $\Gamma$ is smooth on all of $\RR^m$ and all its derivatives of
order $1,\dots,j$ are bounded, the same bound holds for the derivatives
of order $1,\dots,j$ of $\Gamma\circ v$, with $\|\Gamma\|_{C^j(\mathcal K)}$
replaced by $\max_{1\le l\le j}\|\nabla^l\Gamma\|_{L^\infty(\RR^m)}$.
The estimates apply componentwise to vector and matrix fields.
\end{lem}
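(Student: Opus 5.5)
The first inequality follows from the Leibniz rule. For a multi-index of total order $k=j_0+|\alpha|\le j$ in $(t,x)$,
\[
\partial_t^{j_0}\partial_x^\alpha(fg)=\sum_{\beta\le(j_0,\alpha)}\binom{(j_0,\alpha)}{\beta}\,\partial^\beta f\,\partial^{(j_0,\alpha)-\beta}g .
\]
Each factor is bounded in sup norm by $\|f\|_{C^j_{t,x}}$ or $\|g\|_{C^j_{t,x}}$, respectively. The multinomial coefficients sum to $2^{k}\le2^j$. Taking the maximum over $k\le j$ gives $\|fg\|_{C^j_{t,x}}\le2^j\|f\|_{C^j_{t,x}}\|g\|_{C^j_{t,x}}$.

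For the composition I use the multivariate Faà di Bruno formula. Set $D$ to be any derivative of order $k$, $1\le k\le j$, in $(t,x)$. Then $D(\Gamma\circ v)$ is a finite sum, with coefficients depending only on $k$ and $m$, of terms of the form
\[
(\partial^{\gamma}\Gamma)(v)\,\prod_{i=1}^{l}\partial^{\beta_i}v_{r_i},\qquad |\gamma|=l,\quad 1\le l\le k,\quad \sum_i|\beta_i|=k,\quad |\beta_i|\ge1 .
\]
This can be proved by induction on $k$: differentiating a term either hits $(\partial^\gamma\Gamma)(v)$, which raises $l$ by one and adds a factor $\partial v$, or hits one of the factors, which raises one $|\beta_i|$.

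Since $v$ takes values in $\mathcal K$, each $(\partial^\gamma\Gamma)(v)$ is bounded by $\|\Gamma\|_{C^j(\mathcal K)}$. Each product is bounded by $\|v\|_{C^j_{t,x}}^{l}\le(1+\|v\|_{C^j_{t,x}})^j$. Hence every derivative of order $1\le k\le j$ is bounded by $C(j,m)\|\Gamma\|_{C^j(\mathcal K)}(1+\|v\|_{C^j_{t,x}})^j$. The zeroth-order term is at most $\|\Gamma\|_{C^0(\mathcal K)}$. Combining the two gives the stated bound.

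In the variant where $\Gamma$ is defined on all of $\RR^m$, only derivatives of $\Gamma$ of order $l\ge1$ appear in the formula above when $k\ge1$. Their bound by $\max_{1\le l\le j}\|\nabla^l\Gamma\|_{L^\infty(\RR^m)}$ holds whatever the range of $v$. For vector- or matrix-valued fields, both estimates apply entry by entry, and the Euclidean norm costs only a dimensional constant. The only slightly delicate point is the combinatorial structure of Faà di Bruno's formula; I would establish it through the inductive description above rather than through explicit coefficients, since only their dependence on $(j,m)$ matters.
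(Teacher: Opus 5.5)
Your proposal is correct and follows the paper's proof essentially step for step. For the product, both use Leibniz' rule with binomial coefficients summing to $2^k\le2^j$. For the composition, both use the Fa\`a di Bruno structure, with terms $(\nabla^l\Gamma)(v)$ multiplied by $l$ derivatives of $v$ of total order $i$. Both also note that only derivatives of $\Gamma$ of order at least one occur at positive order, which gives the global variant. Your inductive derivation of that structure just makes explicit the term count $C(j,m)$, which the paper leaves implicit.
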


The proof is given in Appendix~\ref{app:elementary}.

\subsection{Periodic operators}

The periodic operators give a gain in frequency on mean-zero
profiles. We later localize them with a cutoff and a Bogovskii
corrector.

\begin{lem}[Periodic antidivergence operators]\label{lem:operators}
For $f\in C^\infty(\TT^3)$ and
$F\in C^\infty(\TT^3;\RR^3)$ define
\begin{equation}\label{eq:inverse-operators}
\Ro f=\nabla\Delta^{-1}\Pne f,\qquad
\Rs F=\nabla\Delta^{-1}\Pne F+(\nabla\Delta^{-1}\Pne F)^T
 -\nabla^2\Delta^{-2}\D F.
\end{equation}
Then $\D\Ro f=\Pne f$, $\Rs F$ is symmetric, and
$\D\Rs F=\Pne F+\nabla\Delta^{-1}\D F-\nabla\Delta^{-1}\D F=\Pne F$.

Let $G\in\mathcal D'(\TT^3)$ be the mean-zero solution of
$\Delta G=\delta_0-1$, where $\delta_0$ is the Dirac mass at the
origin, and let $G_2=G*G$. These periodic distributions are smooth
away from the origin. For every multi-index $\alpha$ and
$z\in[-\tfrac12,\tfrac12]^3\setminus\{0\}$,
\begin{equation}\label{eq:kernel}
\begin{aligned}
|\partial^\alpha G(z)|&\le C_\alpha(1+|z|^{-1-|\alpha|}),\\
|\partial^\alpha G_2(z)|&\le C_\alpha(1+|z|^{1-|\alpha|}).
\end{aligned}
\end{equation}

In particular, $\Ro$, $\Rs$ and
$\Delta^{-1}\Pne$ have zero-mean convolution kernels $K$ satisfying,
for every integer $j\ge0$,
\begin{equation}\label{eq:operator-kernel}
|\nabla^jK(z)|\le C_j\bigl(1+\dist(z,0)^{-2-j}\bigr).
\end{equation}
They are bounded on $L^q$ for $1\le q\le\infty$ and on $C^j_{t,x}$,
and they commute with derivatives and translations.

For every integer $\lambda\ge1$, the scaling identities are
\begin{equation}\label{eq:operator-scaling}
\begin{aligned}
\mathcal L[g(\lambda\cdot)]
&=\lambda^{-1}(\mathcal Lg)(\lambda\cdot),
&&\mathcal L\in\{\Ro,\Rs\},\\
\Delta^{-1}\Pne[g(\lambda\cdot)]
&=\lambda^{-2}(\Delta^{-1}\Pne g)(\lambda\cdot).
\end{aligned}
\end{equation}
Finally,
\begin{equation}\label{eq:operator-integrable-derivatives}
\|\Delta^{-2}\partial^\alpha\Pne g\|_{L^1}
\le C_\alpha\|g\|_{L^1}
\qquad\text{if }|\alpha|\le3.
\end{equation}
\end{lem}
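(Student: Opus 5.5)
The proof splits into four parts: the algebraic identities, the kernel bounds \eqref{eq:kernel}, the kernel bounds \eqref{eq:operator-kernel} for the operators, and the remaining mapping, scaling and $L^1$ statements.

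\textbf{The algebraic identities.} These follow on the Fourier side. For mean-zero $f$ we have $\D\nabla\Delta^{-1}f=f$, so $\D\Ro f=\Pne f$. For $\Rs$, symmetry is clear because $\nabla^2\Delta^{-2}\D F$ is a Hessian. Taking the divergence term by term:
\begin{itemize}
\item the row divergence of $\nabla\Delta^{-1}\Pne F$ is $\Pne F$;
\item that of its transpose is $\nabla\Delta^{-1}\D F$;
\item that of $\nabla^2\Delta^{-2}\D F$ is $\nabla\Delta^{-1}\D F$.
\end{itemize}
This gives $\D\Rs F=\Pne F$.

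\textbf{Bounds \eqref{eq:kernel} for $G$ and $G_2$.} I would write $G=\chi\Gamma+h$, where $\Gamma=-1/(4\pi|z|)$ is the Newtonian potential and $\chi$ is a cutoff near the origin. Then $\Delta h=\delta_0-1-\Delta(\chi\Gamma)$ is smooth, since $\Delta(\chi\Gamma)-\delta_0$ involves only derivatives of $\chi$ away from $0$. Hence $h$ is smooth, and $|\partial^\alpha G|\lesssim 1+|z|^{-1-|\alpha|}$.

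For $G_2$, note that $\Delta G_2=G-\fint G=G$, so the same decomposition applies with $|z|/(8\pi)$, the fundamental solution of $\Delta^2$, in place of $\Gamma$. This gives $|\partial^\alpha G_2|\lesssim1+|z|^{1-|\alpha|}$. Smoothness away from the origin is elliptic regularity.

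\textbf{Kernel bounds \eqref{eq:operator-kernel}.} The kernels are, up to signs:
\begin{itemize}
\item $\nabla G$ for $\Ro$;
\item $G$ for $\Delta^{-1}\Pne$;
\item combinations of $\partial_iG$ and $\partial_i\partial_j\partial_kG_2$ for $\Rs$.
\end{itemize}
Each has homogeneity at least $-2$, so \eqref{eq:kernel} gives the stated bounds near $0$. Away from $0$ they are smooth on the torus. They have zero mean because they are derivatives of periodic distributions, or, for $G$, by its normalization.

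\textbf{Mapping, scaling and the $L^1$ bound.} The kernels are integrable near the origin since $|z|^{-2}\in L^1(\RR^3)$, so Young's inequality gives boundedness on $L^q$ for $1\le q\le\infty$. Commutation with derivatives and translations is automatic for convolution operators, and boundedness on $C^j_{t,x}$ follows from it.

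The scaling identities \eqref{eq:operator-scaling} are again Fourier computations. For integer $\lambda$, $g(\lambda\cdot)$ has Fourier support in $\lambda\ZZ^3$ and its mean equals that of $g$. Each symbol is homogeneous of degree $-1$ (respectively $-2$), which gives the factors $\lambda^{-1}$ and $\lambda^{-2}$.

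Finally, for \eqref{eq:operator-integrable-derivatives}, the kernel of $\Delta^{-2}\partial^\alpha\Pne$ is $\partial^\alpha G_2$. For $|\alpha|\le3$ this is bounded by $1+|z|^{-2}$, which is integrable, and Young's inequality concludes.

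\textbf{Main obstacle.} The only delicate point is $|\alpha|=3$ in \eqref{eq:operator-integrable-derivatives}, and the matching terms in $\Rs$. There $\partial^\alpha G_2$ has singularity exactly $|z|^{-2}$, borderline but still integrable in three dimensions. One must check that no delta-type term appears. The distributional third derivatives of $|z|$ are locally integrable functions, since a delta term only arises at fourth order via $\Delta^2|z|\propto\delta_0$, so the pointwise bound really is the kernel.
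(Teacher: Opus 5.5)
Your proposal is correct and follows essentially the same route as the paper. It checks the identities and the scaling on the Fourier side, subtracts a cutoff multiple of $1/(4\pi|z|)$ from $G$ and of $|z|/(8\pi)$ from $G_2$ before invoking elliptic regularity, and applies Young's inequality to the identified kernels $G$, $\nabla G$ and $\nabla^3G_2$.

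There is one harmless sign slip. Since $\Delta^2|z|=-8\pi\delta_0$ in $\RR^3$, the biharmonic fundamental solution is $-|z|/(8\pi)$, consistent with the paper's statement that $G_2+\chi|z|/(8\pi)$ is smooth. Your remark that third derivatives of $|z|$ produce no delta term makes explicit a point the paper leaves implicit.
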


The kernel estimates and their consequences are proved in
Appendix~\ref{app:elementary}.

\begin{lem}[Products with fast periodic functions]\label{lem:products}
Let $\lambda\in\NN$, $f\in C^1(\TT^3)$, and
$g\in C^\infty(\TT^3)$. For $1\le q<\infty$,
\begin{equation}\label{eq:product-norm}
\|fg(\lambda\cdot)\|_{L^q}
\le\|g\|_{L^q}
\left(\|f\|_{L^q}+\sqrt3\lambda^{-1}\|\nabla f\|_\infty\right).
\end{equation}
If $\fint g=0$, then
\begin{equation}\label{eq:periodic-averaging}
\left|\int fg(\lambda\cdot)\right|
\le\sqrt3\lambda^{-1}\|\nabla f\|_\infty\|g\|_{L^1}.
\end{equation}

Let $K_0(z)=1+\dist(z,0)^{-2}$. If $\fint g=0$ and $\mathcal L$
is one of the operators of Lemma~\ref{lem:operators}, then pointwise
\begin{equation}\label{eq:periodic-commutator}
\begin{aligned}
&\bigl|\mathcal L[fg(\lambda\cdot)]
-f\mathcal L[g(\lambda\cdot)]\bigr|(x)\\
&\qquad\le C\lambda^{-1}\|f\|_{C^1}
\bigl(K_0*|(\Ro g)(\lambda\cdot)|\bigr)(x).
\end{aligned}
\end{equation}
Consequently,
\begin{equation}\label{eq:periodic-antidivergence-L1}
\|\mathcal L[fg(\lambda\cdot)]\|_{L^1}
\le C\lambda^{-1}\|f\|_{C^1}\|g\|_{L^1}.
\end{equation}
All bounds are uniform under translations of $g$ and hold
componentwise for vector or matrix arguments in the domain of
$\mathcal L$.
\end{lem}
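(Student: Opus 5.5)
The plan is to establish the three estimates of Lemma~\ref{lem:products} in order. Each one uses only the $\ZZ^3/\lambda$-periodicity of $g(\lambda\cdot)$ together with the kernel bounds of Lemma~\ref{lem:operators}.

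\textbf{The product bound and the averaging bound.} For \eqref{eq:product-norm}, split $\TT^3$ into the $\lambda^3$ cubes $Q_i$ of side $1/\lambda$ and let $f_i$ be the value of $f$ at the centre of $Q_i$. On $Q_i$ we have $|f-f_i|\le\frac{\sqrt3}{2}\lambda^{-1}\|\nabla f\|_\infty$, which gives
\[
\|fg(\lambda\cdot)\|_{L^q}\le\Bigl(\sum_i|f_i|^q\int_{Q_i}|g(\lambda x)|^q\dd x\Bigr)^{1/q}+\sqrt3\lambda^{-1}\|\nabla f\|_\infty\|g\|_{L^q}.
\]
Every $Q_i$ carries the same mass $\lambda^{-3}\|g\|_{L^q}^q$, so the first term equals $\|g\|_{L^q}\bigl(\lambda^{-3}\sum_i|f_i|^q\bigr)^{1/q}$. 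The averages $\lambda^{-3}\sum_i|f_i|^q$ are Riemann sums of $|f|^q$, and they converge to $\|f\|_{L^q}^q$ as the grid is refined. To avoid passing to a limit, I would instead average the product over translates of the grid: integrate the inequality in the grid offset $h\in[0,1/\lambda)^3$, using that the norm on the left does not depend on $h$. The first term then becomes exactly $\|g\|_{L^q}\|f\|_{L^q}$ up to the same Lipschitz error. The constant $\sqrt3$ is the diameter bound, and it is not sharp. For \eqref{eq:periodic-averaging}, use the same cubes:
\[
\int_{Q_i}f\,g(\lambda\cdot)=\int_{Q_i}(f-f_i)\,g(\lambda\cdot),
\]
because $g(\lambda\cdot)$ has zero mean on each cube. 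Summing over $i$ gives $\sqrt3\lambda^{-1}\|\nabla f\|_\infty\|g\|_{L^1}$.

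\textbf{The commutator bound.} This is the main step. Write $h=(\Ro g)(\lambda\cdot)$. By the scaling identity \eqref{eq:operator-scaling}, $\D h=\lambda\,g(\lambda\cdot)$, which uses $\fint g=0$. Hence
\[
g(\lambda\cdot)=\lambda^{-1}\D h,\qquad \mathcal L[fg(\lambda\cdot)]-f\mathcal L[g(\lambda\cdot)]=\lambda^{-1}\Bigl(\int K(x-y)\bigl(f(y)-f(x)\bigr)\,\partial_{y_j}h_j(y)\dd y\Bigr).
\]
Integrate by parts in $y$. The derivative falls either on $f(y)$, which gives $\nabla f\cdot h$ times $K$, or on $K(x-y)(f(y)-f(x))$. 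In the second case use $|\nabla K(z)|\,|f(y)-f(x)|\lesssim \dist(z,0)^{-3}\|f\|_{C^1}\dist(z,0)$. Both terms are then bounded by $C\|f\|_{C^1}\,(1+\dist(z,0)^{-2})$ acting on $|h|$, which is \eqref{eq:periodic-commutator} since $K_0$ dominates $|K|$. The zero-mean kernel and the $\Pne$ projection contribute constant terms. These are controlled in the same way, using $\int f\,\D h=-\int\nabla f\cdot h$.

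The obstacle I expect is justifying the integration by parts at the singularity $z=0$. The kernel $K$ has a $|z|^{-2}$ singularity, which is locally integrable, and $\nabla K\sim|z|^{-3}$ is not. However, the factor $f(y)-f(x)$ vanishes linearly at $z=0$, so $|\nabla K|\,|f(y)-f(x)|\lesssim|z|^{-2}$ and the product is again integrable. I would therefore excise a ball $B_\epsilon(x)$ and check that the boundary term is $O(\epsilon^{-2}\cdot\epsilon\cdot\epsilon^2)\to0$.

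\textbf{The $L^1$ bound and uniformity.} For \eqref{eq:periodic-antidivergence-L1}, write $\mathcal L[fg(\lambda\cdot)]=f\mathcal L[g(\lambda\cdot)]+(\text{commutator})$. By \eqref{eq:operator-scaling} and $L^1$-boundedness, the first term has $L^1$ norm at most $\|f\|_\infty\lambda^{-1}\|\mathcal Lg\|_{L^1}\le C\lambda^{-1}\|f\|_{C^1}\|g\|_{L^1}$. For the commutator, Young's inequality with $K_0\in L^1$ gives the bound $C\lambda^{-1}\|f\|_{C^1}\|\Ro g\|_{L^1}$, and $\|\Ro g\|_{L^1}\le C\|g\|_{L^1}$. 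Uniformity under translations of $g$ holds because every operator commutes with translations and every norm used is translation invariant. The componentwise statement follows by applying the scalar argument to each entry.
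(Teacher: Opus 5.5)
Your proposal is correct and follows the paper's proof essentially step by step. The cube decomposition with periodicity gives the first two bounds. The commutator comes from integrating by parts against $g(\lambda\cdot)=\lambda^{-1}\D[(\Ro g)(\lambda\cdot)]$, with a ball excised around $x$. The $L^1$ bound then follows from Young's inequality with $K_0\in L^1$.

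The one deviation is in the $L^q$ product bound. The paper avoids your averaging over grid translates by choosing $x_i\in Q_i$ to minimize $|f|$, so that $\lambda^{-3}|f(x_i)|^q\le\int_{Q_i}|f|^q$ holds directly. In your version the averaged first term is not exactly $\|f\|_{L^q}\|g\|_{L^q}$; it is bounded by it via Jensen's inequality for the concave map $t\mapsto t^{1/q}$, which gives the right direction.
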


\begin{proof}
The first two estimates follow by averaging over the periodic cells.
Divide $\TT^3$ into $\lambda^3$ cubes $Q_i$ of side $1/\lambda$, and
choose $x_i\in Q_i$ minimizing $|f|$. On each cube,
\[
|f(x)|\le|f(x_i)|+\sqrt3\lambda^{-1}\|\nabla f\|_\infty,
\]
while periodicity gives
\[
\int_{Q_i}|g(\lambda x)|^q\dd x=\lambda^{-3}\|g\|_{L^q}^q.
\]
Minkowski's inequality in $\ell^q$, together with
$\lambda^{-3}|f(x_i)|^q\le\int_{Q_i}|f|^q$, proves
\eqref{eq:product-norm}. When $g$ has zero mean, its integral against
the constant $f(x_i)$ vanishes on each cube. Subtracting this constant
and using the same oscillation bound proves
\eqref{eq:periodic-averaging}.

For the commutator, let $K$ be the kernel of $\mathcal L$. By
\eqref{eq:inverse-operators} and $\fint g=0$, $\D\Ro g=g$, so
\[
g(\lambda y)=\lambda^{-1}\D_y[(\Ro g)(\lambda y)].
\]
An integration by parts therefore yields
\[
\begin{aligned}
&\mathcal L[fg(\lambda\cdot)](x)
-f(x)\mathcal L[g(\lambda\cdot)](x)\\
&\quad=-\lambda^{-1}\int
\nabla_y\bigl[K(x-y)(f(y)-f(x))\bigr]
\cdot(\Ro g)(\lambda y)\dd y.
\end{aligned}
\]
To justify this identity, first integrate outside $B(x,h)$.
The boundary term is bounded by
\[
Ch^2h^{-2}h\|f\|_{C^1}\|\Ro g\|_\infty,
\]
which tends to zero. By \eqref{eq:operator-kernel},
\[
\bigl|\nabla_y[K(x-y)(f(y)-f(x))]\bigr|
\le C\|f\|_{C^1}K_0(x-y).
\]
This proves \eqref{eq:periodic-commutator}. Its right side has $L^1$
norm at most $C\lambda^{-1}\|f\|_{C^1}\|g\|_{L^1}$, because
$K_0$ is integrable and $\Ro$ is bounded on $L^1$ by
Lemma~\ref{lem:operators}. The remaining
term has the same bound by \eqref{eq:operator-scaling}, proving
\eqref{eq:periodic-antidivergence-L1}.
\end{proof}

\subsection{The far field}

Localization errors are evaluated away from the support of their
source. There the kernel is smooth, and repeated integration by parts
gives arbitrarily large gains in the frequency. The resulting bound
uses an integral norm of the profile and therefore does not lose a
power of its concentration.

\begin{lem}[Far field of oscillatory antidivergences]\label{lem:far}
Let $\mathcal L$ be one of the operators of
Lemma~\ref{lem:operators}, let $g\in C^\infty(\TT^3)$ have zero
mean, and let $f$ be a smooth scalar or vector field in the domain
of $\mathcal L$. Let $\lambda\in\NN$ and $0<d<1/4$. If
$\dist(x,\Supp f)\ge d$, then for every integer $m\ge0$,
\[
|\nabla^m\mathcal L[fg(\lambda\cdot)](x)|
\le C_m d^{-(m+6)}\lambda^{-4}
\|f\|_{C^4}\|\Delta^{-2}g\|_{L^1},
\]
uniformly under translations of $g$. For a unit vector $\bfe$ and
$0\le j\le3$,
$\|\Delta^{-2}(\bfe\cdot\nabla)^jg\|_{L^1}\le C\|g\|_{L^1}$
by \eqref{eq:operator-integrable-derivatives} from
Lemma~\ref{lem:operators}.

If $f=f(t,x)$ is smooth, then for $0\le j\le3$,
\begin{equation}\label{eq:time-derivatives}
\partial_t^j\bigl[f\,g(\lambda x-\lambda\omega t\bfe)\bigr]
=\sum_{i=0}^j\binom ji(\partial_t^{j-i}f)(-\lambda\omega)^i
\bigl((\bfe\cdot\nabla)^ig\bigr)(\lambda x-\lambda\omega t\bfe).
\end{equation}
Each differentiated profile satisfies the preceding integral bound,
uniformly under translations.
\end{lem}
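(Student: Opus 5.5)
The plan is to write $\mathcal L[fg(\lambda\cdot)](x)=\int K(x-y)f(y)g(\lambda y)\dd y$, where $K$ is the kernel of $\mathcal L$ from Lemma~\ref{lem:operators}. Since $\dist(x,\Supp f)\ge d$, the integrand only sees $y$ with $|x-y|\ge d$, where $K$ is smooth with $|\nabla^jK(z)|\le C_jd^{-2-j}$ by \eqref{eq:operator-kernel}. Derivatives in $x$ fall on $K$ alone, so $\nabla^m\mathcal L[fg(\lambda\cdot)](x)=\int(\nabla^mK)(x-y)f(y)g(\lambda y)\dd y$. The only issue is to gain $\lambda^{-4}$ without putting any norm on $g$ other than $\|\Delta^{-2}g\|_{L^1}$.

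To gain the four powers, use that $g$ has zero mean. Set $h=\Delta^{-2}g$, which is smooth with $\Delta^2h=g$. Then
\[
g(\lambda y)=\lambda^{-4}\Delta_y^2[h(\lambda y)].
\]
Substituting and integrating by parts four times in $y$ moves $\Delta_y^2$ onto $\Phi_x(y):=(\nabla^mK)(x-y)f(y)$. There are no boundary terms on $\TT^3$, since $\Phi_x$ is smooth: it vanishes for $y$ in a neighbourhood of $x$ where $f=0$, because the distance to $\Supp f$ is at least $d$. One must be slightly careful that $x-y$ is measured modulo $\ZZ^3$. However, $K$ is smooth on $\TT^3\setminus\{0\}$ and the singularity is excluded, so this is harmless. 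This gives
\[
\nabla^m\mathcal L[fg(\lambda\cdot)](x)=\lambda^{-4}\int\Delta_y^2\Phi_x(y)\,h(\lambda y)\dd y .
\]
By Leibniz, $|\Delta^2_y\Phi_x|\le C\sum_{i\le4}|\nabla^{m+i}K(x-y)||\nabla^{4-i}f(y)|\le C_md^{-(m+6)}\|f\|_{C^4}$ on $\Supp f$, using $d<1/4$ so that the $1$ in \eqref{eq:operator-kernel} is dominated. Hence the right side is at most $C_md^{-(m+6)}\lambda^{-4}\|f\|_{C^4}\int|h(\lambda y)|\dd y$. By periodicity, $\int_{\TT^3}|h(\lambda y)|\dd y=\|h\|_{L^1}$ for integer $\lambda$. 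Translating $g$ translates $h$ and leaves this norm unchanged, which gives the claimed uniformity.

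The remaining assertions are direct. First, $\|\Delta^{-2}(\bfe\cdot\nabla)^jg\|_{L^1}\le C\|g\|_{L^1}$ for $j\le3$ is \eqref{eq:operator-integrable-derivatives} with $\alpha$ of order $j$ along $\bfe$; a unit vector gives a combination of coordinate derivatives with bounded coefficients, and $(\bfe\cdot\nabla)^jg$ has zero mean. Second, \eqref{eq:time-derivatives} is Leibniz's rule combined with the chain rule $\partial_t[g(\lambda x-\lambda\omega t\bfe)]=-\lambda\omega((\bfe\cdot\nabla)g)(\lambda x-\lambda\omega t\bfe)$ iterated. Each differentiated profile $(\bfe\cdot\nabla)^ig$ is mean-zero and, up to translation by $\lambda\omega t\bfe$, fits the first part, with integral bound $C\|g\|_{L^1}$.

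The main obstacle is justifying the integration by parts. One must also check that the kernel bound holds uniformly on the region $|x-y|_{\TT^3}\ge d$, including near the cell boundary. This follows from smoothness of $G,G_2$ away from $0$, which is stated in Lemma~\ref{lem:operators}.
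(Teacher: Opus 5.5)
Your proposal is correct and follows the paper's proof essentially step for step: you use the identity $g(\lambda y)=\lambda^{-4}\Delta_y^2[(\Delta^{-2}g)(\lambda y)]$, integrate by parts four times onto the smooth function $y\mapsto\nabla^mK(x-y)f(y)$, bound the result with \eqref{eq:operator-kernel} and $d<1/4$, use invariance of the $L^1$ norm under integer periodic scaling and translations, and obtain \eqref{eq:time-derivatives} from the chain rule. Your additional remarks simply fill in details the paper leaves implicit, namely smoothness of $\Phi_x$ because $f$ vanishes on $B(x,d)$, and the expansion of $(\bfe\cdot\nabla)^j$ into coordinate derivatives with bounded coefficients.
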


\begin{proof}
Since $g$ has zero mean,
\[
g(\lambda y)=\lambda^{-4}\Delta_y^2[(\Delta^{-2}g)(\lambda y)].
\]
Let $K$ be the kernel of $\mathcal L$. Since $f$ vanishes on
$B(x,d)$, the function $y\mapsto\nabla^mK(x-y)f(y)$ is smooth.
Four integrations by parts give
\[
\begin{aligned}
\nabla^m\mathcal L[fg(\lambda\cdot)](x)
=\lambda^{-4}\int\Delta_y^2\bigl[\nabla^mK(x-y)f(y)\bigr]
(\Delta^{-2}g)(\lambda y)\dd y.
\end{aligned}
\]
By \eqref{eq:operator-kernel}, the bracket is bounded by
$Cd^{-(m+6)}\|f\|_{C^4}$. Integer periodic scaling preserves the
$L^1$ norm of $\Delta^{-2}g$, which proves the estimate.
The time-derivative formula is the chain rule. Each time derivative
costs a factor $\lambda\omega$ or a derivative of $f$, with no loss
in concentration.
\end{proof}

\subsection{Compactly supported antidivergences}\label{sec:compact}

Throughout this subsection $U=B(x_0,r_U)$ with
$0<r_U\le r_B<1/4$; we identify $U$ with a Euclidean ball.
Constants depend only on $r_B$ and a lower bound for $r_U$. In
Lemma~\ref{lem:ext} the radius is $r'$; in all later applications
$r_U\ge r_0\ge r_B/2$, where $r_0$ is the radius fixed after
Lemma~\ref{lem:ext}, so the constants depend only on $r_B$.

The range of a compactly supported divergence is constrained by
integration by parts. For a symmetric matrix field, symmetry also
forces the angular momentum of its divergence to vanish. These are exactly the
compatibility conditions used in the construction below.

\begin{lem}[Moment identities]\label{lem:moment-identities}
Let $F,a\in C^1_c(U;\RR^3)$, $q\in C^1_c(U)$ and
$M\in C^1_c(U;\mathrm{Sym}_3)$. Then
\begin{gather*}
\int F=-\int(x-x_0)\D F,\qquad
\int\nabla q=\int\D M=\int\curl a=0,\\
\int(x-x_0)\times\nabla q
=\int(x-x_0)\times\D M=0,\\
\int(x-x_0)\times\curl a=2\int a.
\end{gather*}
Moreover $\Delta v=\D(\nabla v+\nabla v^T)$ for every divergence-free
$v\in C^2$.
\end{lem}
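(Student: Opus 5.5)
The plan is to prove each identity by integration by parts against a polynomial test function. Since $F,a,q,M$ have compact support in $U$, all boundary terms vanish. Because $U$ is identified with a Euclidean ball and $\Supp\subset U$, we may regard everything as compactly supported in $\RR^3$ and integrate over $\RR^3$. There the functions $x-x_0$ are honest smooth functions, whereas on $\TT^3$ they would not be periodic. This identification is the only subtle point, and it is exactly why the hypothesis $r_B<1/4$ was imposed.

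\textbf{First identity.} For $\int F=-\int(x-x_0)\D F$, write componentwise
\[
\int (x-x_0)_i\,\partial_jF_j=-\int \delta_{ij}F_j=-\int F_i.
\]

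\textbf{Vanishing integrals.} The integrals $\int\partial_jq$, $\int\partial_jM_{ij}$ and $\int\varepsilon_{ijk}\partial_ja_k$ are integrals of derivatives of compactly supported functions, so each is zero.

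\textbf{Angular momentum identities.} Set $y=x-x_0$. For the gradient term,
\[
\int\varepsilon_{ijk}y_j\partial_kq=-\int\varepsilon_{ijk}\delta_{jk}q=0.
\]
For the symmetric matrix,
\[
\int\varepsilon_{ijk}y_j\partial_lM_{kl}=-\int\varepsilon_{ijk}M_{kj}=0,
\]
since $\varepsilon_{ijk}$ is antisymmetric in $j,k$ while $M_{kj}$ is symmetric in $j,k$. For the curl,
\[
\bigl(y\times\curl a\bigr)_i=\varepsilon_{ijk}y_j\varepsilon_{klm}\partial_la_m .
\]
Using $\varepsilon_{ijk}\varepsilon_{klm}=\delta_{il}\delta_{jm}-\delta_{im}\delta_{jl}$ and integrating by parts, this becomes
\[
-\int(\delta_{il}\delta_{jm}-\delta_{im}\delta_{jl})\delta_{jl}a_m=-\int(a_i-3a_i)=2\int a_i .
\]

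\textbf{Final identity.} For $\D(\nabla v+\nabla v^T)$, compute
\[
\partial_j(\partial_jv_i+\partial_iv_j)=\Delta v_i+\partial_i\D v=\Delta v_i .
\]

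No step is a genuine obstacle. The only care needed is the index bookkeeping in the double-epsilon contraction for the curl identity, where a sign error is the likeliest mistake.
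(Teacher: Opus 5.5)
Your proof is correct and follows the paper's argument. Both integrate by parts against the linear weights $x-x_0$, use the antisymmetry of $\varepsilon$ against the symmetry of $M$, and use the contraction $\varepsilon_{ijk}\varepsilon_{klm}=\delta_{il}\delta_{jm}-\delta_{im}\delta_{jl}$ for the curl. The only cosmetic difference is that you compute the gradient case directly, whereas the paper reduces it to the symmetric case via $\nabla q=\D(q\Id)$.
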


\begin{proof}
All identities follow by integration by parts. In particular, symmetry
gives
\[
\begin{aligned}
\int\bigl[(x-x_0)\times\D M\bigr]_l
&=\int\varepsilon_{lab}(x-x_0)_a\partial_jM_{bj}\\
&=-\varepsilon_{lab}\int M_{ba}=0.
\end{aligned}
\]
The gradient case follows from $\nabla q=\D(q\Id)$. For the curl,
\[
\int\bigl[(x-x_0)\times\curl a\bigr]_l
=-\varepsilon_{lab}\varepsilon_{bad}\int a_d
=2\int a_l,
\]
using $\varepsilon_{lab}\varepsilon_{bad}=-2\delta_{ld}$.
\end{proof}

\begin{lem}[Bogovskii operator]\label{lem:bog}
There is a linear operator
\[
\Bog_U:\left\{h\in C^\infty_c(U):\int h=0\right\}
\longrightarrow C^\infty_c(U;\RR^3)
\]
such that
\[
\D\Bog_Uh=h,\qquad \int(x-x_0)\times\Bog_Uh=0,
\]
\[
\|\Bog_Uh\|_{C^k}\le C_k\|h\|_{C^k}\quad(k\ge0).
\]
It preserves smooth dependence on time and commutes with time
derivatives.
\end{lem}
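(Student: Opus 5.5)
We take the classical Bogovskii operator on a ball and then add a compactly supported curl that removes the angular momentum. This curl correction is divergence free, so it does not spoil $\D\Bog_Uh=h$.

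\textbf{Step 1: the classical operator.} Since $U$ is a ball, it is star-shaped with respect to every point of the smaller ball $B(x_0,r_U/2)$. Fix $\varphi\in C^\infty_c(B(x_0,r_U/2))$ with $\int\varphi=1$ and set
\[
\Bog^0_Uh(x)=\int h(y)\,N(x,y)\dd y,\qquad
N(x,y)=\frac{x-y}{|x-y|^3}\int_{|x-y|}^\infty\varphi\Bigl(y+r\frac{x-y}{|x-y|}\Bigr)r^2\dd r.
\]
This is the standard construction of \cite{Bog79,Galdi}. For $h\in C^\infty_c(U)$ with $\int h=0$, it gives $\Bog^0_Uh\in C^\infty_c(U;\RR^3)$ and $\D\Bog^0_Uh=h$. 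To get the $C^k$ bounds, I would use the change of variables $z=x-y$:
\[
\Bog^0_Uh(x)=\int h(x-z)\,\frac{z}{|z|^3}\int_{|z|}^\infty\varphi\Bigl(x-z+r\frac{z}{|z|}\Bigr)r^2\dd r\dd z.
\]
In this form every $x$-derivative falls either on $h$ or on the smooth, compactly supported $\varphi$. The kernel is $O(|z|^{-2})$ and integrable over the bounded set $|z|\le 2r_U$. This gives $\|\Bog^0_Uh\|_{C^k}\le C_k\|h\|_{C^k}$, with constants depending only on $r_U$, $r_B$ and $\varphi$.

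Support in $U$ follows from star-shapedness: if $x\notin U$, the ray from $y$ through $x$ never meets the support of $\varphi$. The operator is linear and acts pointwise in $t$, so it preserves smooth time dependence and commutes with $\partial_t$. Differentiating under the integral is justified by the same integrable kernel bound.

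\textbf{Step 2: removing the angular momentum.} Fix a scalar $\phi_\star\in C^\infty_c(U)$ with $\int\phi_\star=1$, and define
\[
\Bog_Uh=\Bog^0_Uh-\tfrac12\curl\Bigl[\phi_\star(x)\int(y-x_0)\times\Bog^0_Uh(y)\dd y\Bigr].
\]
This is exactly the curl correction used for $D_*$ in Lemma~\ref{lem:ext}. The correction is divergence free, so $\D\Bog_Uh=h$, and it is still supported in $U$. By Lemma~\ref{lem:moment-identities}, $\int(x-x_0)\times\curl a=2\int a$. With $a=\phi_\star c$ for a constant vector $c$, this gives
\[
\int(x-x_0)\times\curl(\phi_\star c)=2c.
\]
Hence the angular momentum of $\Bog_Uh$ equals $(1-\int\phi_\star)\,c=0$, where $c=\int(y-x_0)\times\Bog^0_Uh$.

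For the estimates, the constant vector satisfies $|c|\le C\|\Bog^0_Uh\|_{L^\infty}\le C\|h\|_{C^0}$. Therefore the correction is bounded in every $C^k$ by $C_k\|h\|_{C^0}$. Time regularity of the correction is inherited from $\Bog^0_U$.

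\textbf{Main obstacle.} The only delicate point is the $C^k$ bound for the singular integral in Step 1, in particular at $k=0$ and when exchanging derivatives with the weakly singular kernel. The change of variables $z=x-y$ resolves this: after it, no derivative ever falls on the singular factor. A full $C^{k,\alpha}$ or $W^{k,q}$ theory via Calderón–Zygmund estimates is not needed, because the stated bound allows the same number of derivatives on both sides.
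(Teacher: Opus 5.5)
Your proof is correct. The $C^k$ estimate is the same as the paper's: after substituting $z=x-y$, derivatives fall only on $h$ and on the weight, and the kernel is $O(|z|^{-2})$ on $|z|\le 2r_U$. The difference lies in how you obtain the vanishing angular momentum.

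The paper takes the weight $b_U$ radial about $x_0$. Substituting $z=y+r(x-y)$ in the Bogovskii formula gives $\int(x-x_0)\times\Bog_Uh=\bigl(\int(y-x_0)h\bigr)\times\bigl(\int(z-x_0)b_U\bigr)$, which vanishes by radial symmetry. So the condition comes for free, the operator is literally Bogovskii's, and $\Supp\Bog_Uh$ stays in the union of segments joining $\Supp h$ to $\Supp b_U$.

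You instead allow an arbitrary weight and subtract the finite-rank correction $\frac12\curl[\phi_\star c]$. This is the device of Lemma~\ref{lem:ext}, and it relies only on Lemma~\ref{lem:moment-identities}. Your route avoids the change-of-variables computation and works for any compactly supported right inverse of the divergence with a $C^0$ bound. The price is a nonlocal term: a fixed bump times a global moment of $\Bog^0_Uh$. This is harmless, because Lemmas~\ref{lem:ext}, \ref{lem:sym} and \ref{lem:continuity} use only $\D\Bog_Uh=h$, zero angular momentum, compact support in $U$, and the $C^k$ bounds.

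Two small points remain.
\begin{enumerate}
\item To keep constants depending only on $r_B$ and a lower bound for $r_U$, as Section~\ref{sec:compact} requires, take $\varphi$ and $\phi_\star$ to be rescalings of fixed profiles. The paper does this with $b_U(x)=r_U^{-3}b_1((x-x_0)/r_U)$.
\item Your ray argument shows only that $\Bog^0_Uh$ vanishes outside $U$. Compactness of its support inside $U$ follows from the fact that $\Bog^0_Uh(x)\ne0$ forces $x$ onto a segment joining $\Supp h$ to $\Supp\varphi$; alternatively, it is part of the cited result.
\end{enumerate}
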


\begin{proof}
Let $b_U(x)=r_U^{-3} b_1((x-x_0)/r_U)$, where
$b_1\in C^\infty_c(B(0,1/2))$ is fixed and radial, with $\int b_1=1$. The ball
$U$ is star-shaped with respect to every point of
$B(x_0,r_U/2)\supset\Supp b_U$. Bogovskii's operator
\cite{Bog79} (see also \cite[Section~III.3]{Galdi}) is
\[
\Bog_Uh(x)=\int_U h(y)\,\frac{x-y}{|x-y|^3}
\int_{|x-y|}^\infty b_U\Bigl(y+r\frac{x-y}{|x-y|}\Bigr)r^2\dd r\dd y.
\]
For $h\in C^\infty_c(U)$ with $\int h=0$ it satisfies
$\Bog_Uh\in C^\infty_c(U)$ and $\D\Bog_Uh=h$ \cite{Bog79},
\cite[Section~III.3]{Galdi}. The support property is also seen directly:
$\Bog_Uh(x)\ne0$ only if $x$ lies on a segment from a point of
$\Supp h$ to a point of $\Supp b_U$, and the union of these
segments is a compact subset of $U$.

To compute the angular momentum, write the inner integral using
$r\ge1$ and substitute $z=y+r(x-y)$. Since
$\int_1^\infty r^{-2}\dd r=1$, this gives
\[
\begin{aligned}
\int(x-x_0)\times\Bog_Uh
&=\left(\int(y-x_0)h(y)\dd y\right)
\times\left(\int(z-x_0)b_U(z)\dd z\right)=0.
\end{aligned}
\]
The last equality follows from the radial symmetry of $b_U$.

For the $C^k$ bound substitute $y=x-z$:
\[
\Bog_Uh(x)=\int h(x-z)\,K_U(x,z)\dd z,\qquad
K_U(x,z)=\frac{z}{|z|^3}\int_{|z|}^{2r_U} b_U\Bigl(x-z+r\frac z{|z|}\Bigr)r^2\dd r,
\]
where we used that the integrand vanishes for $r>2r_U$ when
$x-z\in U$. Derivatives in $x$ fall on $h$ or on $b_U$, and
$|\partial_x^\alpha K_U(x,z)|\le C_\alpha\|b_U\|_{C^{|\alpha|}} r_U^3|z|^{-2}$.
Since $|z|^{-2}$ is integrable on $B(0,2r_U)$, differentiation under
the integral gives
$|\partial^\alpha\Bog_Uh(x)|\le C_\alpha\|h\|_{C^{|\alpha|}}$, with
$C_\alpha$ depending only on $b_1$, $\alpha$ and bounds for
$r_U,r_U^{-1}$. The kernel does not depend on $t$, which gives
the last statement.
\end{proof}

\begin{lem}[Compactly supported symmetric antidivergence]\label{lem:sym}
There is a linear operator $\Sym_U$ such that, for every
$f\in C^\infty_c(U;\RR^3)$ satisfying
\[
\int f=0,\qquad \int(x-x_0)\times f=0,
\]
the field $\Sym_U f$ belongs to $C^\infty_c(U;\mathrm{Sym}_3)$ and obeys
\[
\D\Sym_U f=f,\qquad
\|\Sym_U f\|_{C^k}\le C_k\|f\|_{C^{k+1}}
\quad(k\ge0).
\]
The operator commutes with $\partial_t$.
\end{lem}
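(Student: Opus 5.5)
We build $\Sym_U$ in two stages. First we use the Bogovskii operator of Lemma~\ref{lem:bog} row by row to get a compactly supported, possibly non-symmetric, antidivergence with vanishing angular momentum. Then we remove its antisymmetric part by a correction of the form $\curl$ of something, which is divergence free.

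\textbf{Step 1 (nonsymmetric solution).} Write $f=(f_1,f_2,f_3)$. Since $\int f_i=0$, Lemma~\ref{lem:bog} gives $A_{i\cdot}:=\Bog_U f_i\in C^\infty_c(U;\RR^3)$ with $\sum_j\partial_jA_{ij}=f_i$, so $\D A=f$. It also gives $\|A\|_{C^k}\le C_k\|f\|_{C^k}$. The antisymmetric part $\frac12(A-A^T)$ is encoded by the vector $\alpha_l=\frac12\varepsilon_{lij}A_{ij}$. By the computation in Lemma~\ref{lem:moment-identities},
\[
\int(x-x_0)\times f=-\int\varepsilon_{l ab}A_{ba}=2\int\alpha,
\]
so the angular momentum hypothesis gives $\int\alpha=0$.

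\textbf{Step 2 (symmetrization).} We look for $M=A+D$ with $\D D=0$, $D$ compactly supported in $U$, and $D-D^T=-(A-A^T)$. The standard choice is $D_{ij}=\varepsilon_{jkl}\partial_k\Psi_{il}$, the row-wise curl of a matrix field $\Psi$. Then $\D D=0$ automatically, and the antisymmetric part of $D$ is governed by $\varepsilon_{mij}D_{ij}=\partial_i\Psi_{im}-\partial_m\Tr\Psi$.

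Take $\Psi=\psi\Id-\dots$, or more simply $\Psi_{il}=\varepsilon_{ilr}\beta_r$ antisymmetric. A short computation gives $D_{ij}=\partial_i\beta_j-\delta_{ij}\D\beta$ (up to sign), which is symmetric, so this choice is useless. Instead take $\Psi_{il}=\beta_{il}$ with $\Tr\Psi=0$, reducing to the vector equation $\partial_i\Psi_{im}=-2\alpha_m$, i.e.\ $\D\Psi^T=-2\alpha$. We solve it by $\Psi^T_{m\cdot}=-2\Bog_U\alpha_m$, which is allowed because $\int\alpha_m=0$.

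\textbf{Step 3 (trace and estimates).} The trace condition needs care. If $\Tr\Psi\ne0$, we replace $\Psi$ by $\Psi-\frac13(\Tr\Psi)\Id$, which changes $\D\Psi^T$ by $-\frac13\nabla\Tr\Psi$. We then recheck the identity for $\varepsilon D$ to ensure the $\nabla\Tr$ terms cancel. In the formula $\partial_i\Psi_{im}-\partial_m\Tr\Psi$ the trace part contributes $\frac13\partial_m\Tr-\partial_m\Tr\cdot(1-1)$, and we would verify this algebra carefully; this index bookkeeping is the main obstacle. The alternative is to keep $\Tr\Psi$ and solve $\D\Psi^T-\nabla\Tr\Psi=-2\alpha$ by a small fixed-point or by first adjusting with a scalar Bogovskii correction.

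Finally set $\Sym_Uf=A+D$. It is symmetric, has $\D(A+D)=f$, is supported in $U$, and is linear in $f$ and commutes with $\partial_t$, since each ingredient does. For the bounds, $\|D\|_{C^k}\le C\|\Psi\|_{C^{k+1}}\le C\|\alpha\|_{C^{k+1}}\le C\|A\|_{C^{k+1}}\le C\|f\|_{C^{k+1}}$. This accounts for the loss of one derivative in the statement.
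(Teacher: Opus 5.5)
Your Step 1 is correct and matches the paper; your $\alpha$ is the paper's $n$. The gap is in Steps 2--3, which is where the symmetrization actually happens, and you leave it unresolved.

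With $D_{ij}=\varepsilon_{jkl}\partial_k\Psi_{il}$ one has $\varepsilon_{mij}D_{ij}=\partial_m\Tr\Psi-\partial_i\Psi_{im}$, the opposite sign to what you wrote. So $A+D$ is symmetric if and only if
\[
\partial_i\Psi_{im}-\partial_m\Tr\Psi=2\alpha_m .
\]
Solving $\partial_i\Psi^0_{im}=2\alpha_m$ row-wise with $\Bog_U$ is legitimate, since $\int\alpha=0$. But the result is not trace-free, and your proposed repair fails. For $\Psi=\Psi^0-\tfrac13(\Tr\Psi^0)\Id$ one gets $\Tr\Psi=0$ and $\partial_i\Psi_{im}-\partial_m\Tr\Psi=2\alpha_m-\tfrac13\partial_m\Tr\Psi^0$, so an antisymmetric residual survives. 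The fixed-point and scalar-Bogovskii alternatives are not carried out, so your final assertion that $A+D$ is symmetric is unproved.

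Your side remark is also inaccurate. For $\Psi_{il}=\varepsilon_{ilr}\beta_r$, the field $D_{ij}=\partial_i\beta_j-\delta_{ij}\D\beta$ is not symmetric. That ansatz fails for a different reason: $\varepsilon_{mij}D_{ij}=(\curl\beta)_m$, which cannot equal $-2\alpha_m$ unless $\D\alpha=0$.

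The missing observation is that replacing $\Psi$ by $\Psi+\phi\Id$ changes $\partial_i\Psi_{im}-\partial_m\Tr\Psi$ by $-2\partial_m\phi$. The correction is therefore pointwise and uses the factor $\tfrac12$, not $\tfrac13$. With $\Psi^0_{im}=2(\Bog_U\alpha_m)_i$ and $\Psi=\Psi^0-\tfrac12(\Tr\Psi^0)\Id$, the displayed equation holds exactly. With this choice the rest of your argument goes through and gives a valid variant: zero divergence of $D$, support in $U$, linearity, commutation with $\partial_t$, and the one-derivative loss.

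The paper avoids the trace issue altogether. It keeps only the symmetric part $M^{\mathrm s}$ of the Bogovskii matrix, which changes the divergence by $-\curl n$. It then adds the manifestly symmetric field $\varepsilon_{ikl}\partial_kY_{lj}+\varepsilon_{jkl}\partial_kY_{li}$, where $\partial_jY_{lj}=n_l$. Its divergence is exactly $\curl n$, because the second term is divergence free, so no trace correction is ever needed.
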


\begin{proof}
Since $\int f_i=0$, put $M_{ij}(x)=(\Bog_U f_i)_j(x)$, so that
$\partial_jM_{ij}= f_i$. Split $M=M^{\mathrm s}+A$ into symmetric and
antisymmetric parts, and let $n_k=\frac12\varepsilon_{kij}A_{ij}$. Then
$A_{ij}=\varepsilon_{ijk}n_k$ and $\D A=\curl n$. By
Lemma~\ref{lem:moment-identities},
\[
0=\int(x-x_0)\times f
=\int(x-x_0)\times\curl n=2\int n.
\]
Let $Y$ be the matrix with rows $Y_{l\cdot}=\Bog_Un_l$, so that
$\partial_jY_{lj}=n_l$, and define
\[
(\Sym_U f)_{ij}=M^{\mathrm s}_{ij}
+\varepsilon_{ikl}\partial_kY_{lj}+\varepsilon_{jkl}\partial_kY_{li}.
\]
This field is symmetric and satisfies
$\D\Sym_U f=f-\curl n+\curl n=f$. Lemma~\ref{lem:bog} gives
\[
\|\Sym_U f\|_{C^k}\le C\bigl(\|M\|_{C^k}+\|Y\|_{C^{k+1}}\bigr)
\le C\|f\|_{C^{k+1}}.
\]
\end{proof}

\subsection{Directions and lines}

To cancel the symmetric defect with nonnegative weights, we need
a positive decomposition into rank-one matrices. The following
explicit formula uses the two diagonals in each coordinate plane.

\begin{lem}[Geometric lemma]\label{lem:geometric}
For $1\le i<j\le3$, set
$\xi^\pm_{ij}=(\bfe_i\pm\bfe_j)/\sqrt2$. Every
$S\in\mathrm{Sym}_3$ with $|S-\Id|<1/5$ satisfies
\begin{equation}\label{eq:geometric-decomposition}
S=\sum_{1\le i<j\le3}\ \sum_\pm\Gamma^\pm_{ij}(S)\,
\xi^\pm_{ij}\otimes\xi^\pm_{ij},
\end{equation}
where the affine weights are
\[
\Gamma^\pm_{ij}(S)=S_{ii}+S_{jj}-\tfrac12\Tr S\pm S_{ij}
\ge\tfrac12-\tfrac52|S-\Id|>0.
\]
\end{lem}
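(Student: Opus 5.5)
The identity \eqref{eq:geometric-decomposition} is a linear statement, so I will check it by expanding the right-hand side entry by entry. The positivity bound will then follow from a crude estimate of how far each affine weight can move away from its value at $\Id$.

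\textbf{The expansion.} For $i<j$ we have
\[
\xi^\pm_{ij}\otimes\xi^\pm_{ij}=\tfrac12\bigl(\bfe_i\otimes\bfe_i+\bfe_j\otimes\bfe_j\pm(\bfe_i\otimes\bfe_j+\bfe_j\otimes\bfe_i)\bigr).
\]
Summing over the two signs with weights $\Gamma^\pm_{ij}=A_{ij}\pm S_{ij}$, where $A_{ij}=S_{ii}+S_{jj}-\tfrac12\Tr S$, gives
\[
A_{ij}(\bfe_i\otimes\bfe_i+\bfe_j\otimes\bfe_j)+S_{ij}(\bfe_i\otimes\bfe_j+\bfe_j\otimes\bfe_i).
\]
The off-diagonal entries are therefore reproduced exactly, one pair $(i,j)$ at a time. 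The diagonal entry $(k,k)$ collects $A_{ij}$ over the two pairs containing $k$. For instance, with $k=1$ it equals
\[
A_{12}+A_{13}=2S_{11}+S_{22}+S_{33}-\Tr S=S_{11},
\]
and the same holds by symmetry for $k=2,3$. This proves the decomposition for every symmetric $S$; the smallness hypothesis on $|S-\Id|$ is not needed for the identity.

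\textbf{The lower bound.} Write $S=\Id+E$ with $|E|=|S-\Id|$. At $S=\Id$ each weight equals $1+1-\tfrac32=\tfrac12$. Since $\Gamma^\pm_{ij}$ is affine in $S$,
\[
\Gamma^\pm_{ij}(S)=\tfrac12+E_{ii}+E_{jj}-\tfrac12\Tr E\pm E_{ij}.
\]
Every entry of $E$ is bounded by $|E|$, the Euclidean (Frobenius) norm. The deviation is therefore at most
\[
|E|+|E|+\tfrac12\cdot 3|E|+|E|=\tfrac92|E|.
\]
This is cruder than the claimed $\tfrac52|E|$, so the bound needs sharpening.

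\textbf{Sharpening the estimate.} This is the only real obstacle. The deviation can be rewritten as
\[
\tfrac12(E_{ii}+E_{jj}-E_{kk})\pm E_{ij},\qquad \{i,j,k\}=\{1,2,3\}.
\]
Bounding each entry by $|E|$ now gives
\[
\tfrac12\cdot3|E|+|E|=\tfrac52|E|,
\]
which is exactly the constant in the statement. Hence
\[
\Gamma^\pm_{ij}(S)\ge\tfrac12-\tfrac52|S-\Id|,
\]
and this is strictly positive once $|S-\Id|<1/5$.
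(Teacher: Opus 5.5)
Your proposal is correct and follows the paper's proof: the same entrywise check of the decomposition (off-diagonal entries from $\tfrac12(\Gamma^+_{ij}-\Gamma^-_{ij})$, diagonal entries from $2S_{11}+S_{22}+S_{33}-\Tr S=S_{11}$), followed by the observation that each weight equals $\tfrac12$ at $\Id$. The paper's remark about writing out $\Tr S$ amounts to your rewriting $S_{ii}+S_{jj}-\tfrac12\Tr S=\tfrac12(S_{ii}+S_{jj}-S_{kk})$, which gives the deviation bound $\tfrac32|S-\Id|+|S-\Id|=\tfrac52|S-\Id|$.
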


\begin{proof}
The $ij$ entry on the right side of
\eqref{eq:geometric-decomposition} is
$\frac12(\Gamma^+_{ij}-\Gamma^-_{ij})=S_{ij}$. Its first diagonal
entry is
\[
\tfrac12(\Gamma^+_{12}+\Gamma^-_{12}
+\Gamma^+_{13}+\Gamma^-_{13})
=2S_{11}+S_{22}+S_{33}-\Tr S=S_{11},
\]
and the other diagonal entries follow in the same way. At $S=\Id$
each weight is $1/2$. Writing $\Tr S=S_{11}+S_{22}+S_{33}$, its
change is bounded by $\frac52|S-\Id|$, which proves positivity.
\end{proof}

This is an explicit instance of the geometric lemma of
\cite[Lemma~2.4]{DS17}. We use the finite sets of unit directions
\[
\begin{aligned}
\Lambda_R&=\{\pm\bfe_1,\pm\bfe_2,\pm\bfe_3\},\\
\Lambda_T&=\{\pm\xi^+_{ij},\pm\xi^-_{ij}:1\le i<j\le3\},
\qquad \Lambda=\Lambda_R\cup\Lambda_T.
\end{aligned}
\]
For the opposite directions, set
$\Gamma_{\pm\xi^+_{ij}}=\Gamma^+_{ij}$ and
$\Gamma_{\pm\xi^-_{ij}}=\Gamma^-_{ij}$. Every $k\in\Lambda$ is parallel
to a nonzero lattice vector, so $y+\RR k$ is a closed line in $\TT^3$.

\begin{lem}[Disjoint lines]\label{lem:lines}
There are points $y_k\in\TT^3$, $k\in\Lambda$, such that the closed
lines $L_k=y_k+\RR k$ are pairwise disjoint. Let $s_*>0$ be their
minimal distance.
\end{lem}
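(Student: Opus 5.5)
Each $k\in\Lambda$ is parallel to a lattice vector, so every line closes up in $\TT^3$. The plan is to pick the base points one at a time, choosing each $y_k$ off a null set determined by the lines already placed. For two closed lines $L=y+\RR k$ and $L'=y'+\RR k'$ in $\TT^3$, I first record when they meet. Lift to $\RR^3$: they intersect iff $y'-y\in\RR k+\RR k'+\ZZ^3$.

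\emph{Non-parallel directions.} If $k\nparallel k'$, then $\RR k+\RR k'$ is a rational plane, because $k,k'$ are multiples of lattice vectors. Hence $\RR k+\RR k'+\ZZ^3$ is a countable union of parallel translates of this plane. Since $\RR k+\RR k'$ contains a rank-two sublattice, the translates form a locally finite family, and their projection to $\TT^3$ is a closed two-dimensional subtorus $P_{kk'}$ (a finite union of translated planes). So $L\cap L'=\emptyset$ iff $y'-y\notin P_{kk'}$, a closed set of measure zero.

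\emph{Parallel directions.} If $k'=\pm k$, the condition becomes $y'-y\notin\RR k+\ZZ^3$, whose image is the closed line through $0$. Again this is a closed null set.

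With these two facts, order $\Lambda$ and choose each $y_k$ outside the finite union $\bigcup_{k'\text{ earlier}}\bigl(y_{k'}+P_{kk'}\bigr)$, where $P_{kk'}$ is replaced by the line $\RR k$ in the parallel case. This union is a finite union of closed null sets, so its complement is nonempty and open. The lines obtained this way are pairwise disjoint. Since $\Lambda$ is finite and each $L_k$ is compact, the distances $\dist(L_k,L_{k'})$ are positive, and $s_*=\min_{k\ne k'}\dist(L_k,L_{k'})>0$.

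Note that $\Lambda$ contains opposite directions $\pm k$, which give the same line set $y+\RR k$. These are handled as parallel pairs with distinct base points, so they yield distinct parallel lines. I expect the main obstacle to be the closedness and null-measure claim for $\RR k+\RR k'+\ZZ^3$ in $\TT^3$, which depends on the rationality of the plane. For the concrete directions here ($\bfe_i$ and $(\bfe_i\pm\bfe_j)/\sqrt2$), the plane $\RR k+\RR k'$ can also be checked directly to be spanned by integer vectors. Alternatively, one can write explicit base points, for instance with small irrational-free offsets such as $y_k$ having coordinates in $\frac1{100}\ZZ$, and verify disjointness by hand. I would still present the measure argument.
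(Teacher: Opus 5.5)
Your proposal is correct and follows the paper's proof: choose the base points successively, and for each new direction avoid, for every line already placed, the null set of positions $y$ for which $y+\RR k$ meets that line. The paper obtains nullity by noting that $L_i+\RR k$ is the image of a smooth map from a two-torus, whereas you identify this set explicitly as a rational subtorus, or as a closed line when the directions are parallel; this extra closedness is not needed, and you also supply the short compactness argument for $s_*>0$.
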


\begin{proof}
Choose the points successively. Given finitely many closed lines
$L_1,\dots,L_m$, a line $y+\RR k$ meets $L_i$ if and only if
$y\in L_i+\RR k$, which is the image of a smooth map from a
two-dimensional torus, hence a null set. Choose $y$ outside these null
sets.
\end{proof}

\subsection{The stationary profile}

We construct a stationary weight which is large along the lines
$L_k$ and has bounded $L^p$ norm. Its positive contribution will cover the density loss when the
new defects are corrected at the following stage.

For $\mu\ge1$ we define a smooth, positive, time-independent
function $\Psi_\mu$ on $\TT^3$. For each $k\in\Lambda$ fix linearly independent
$\mathbf n_{k,1},\mathbf n_{k,2}\in\ZZ^3$ orthogonal to $ k$, and put
\[
\begin{aligned}
F_k(x)&=\sum_{j=1}^2\sin^2\bigl(\pi\mathbf n_{k,j}\cdot(x-y_k)\bigr),\\
\Psi_\mu(x)&=1+\sum_{k\in\Lambda}\bigl(\mu^{-2}+F_k(x)\bigr)^{-1+\gamma/(2N)}.
\end{aligned}
\]
The zero set $Z_k$ of $F_k$ is a finite union of closed lines parallel
to $ k$, containing $L_k$. The linear map
$z\mapsto(\mathbf n_{k,1}\cdot z,\mathbf n_{k,2}\cdot z)$ is an
isomorphism from $ k^\perp$ onto $\RR^2$. Together with
$4\dist(s,\ZZ)^2\le\sin^2(\pi s)\le\pi^2\dist(s,\ZZ)^2$, this gives
\begin{equation}\label{eq:Fk}
c\dist(x,Z_k)^2\le F_k(x)\le C\dist(x,Z_k)^2,\qquad
|\nabla F_k|\le CF_k^{1/2},
\end{equation}
the last bound because $F_k$ is a sum of squares of smooth functions.
Let $s_0>0$ be smaller than half the minimal distance between two
distinct lines in $\bigcup_kZ_k$ that are parallel to the same
direction.

For the moving profiles, fix scalar functions
$\zeta\in C^\infty_c(B(0,1/16);[0,\infty))$ and
$\chi_*\in C^\infty_c(B(0,1/8);[0,1])$ on $\RR^3$, with
$\int_{\RR^3}\zeta=1$ and $\chi_*=1$ on $B(0,1/16)$. Define
\begin{equation}\label{eq:concentrated-bump}
\zeta_\mu(x)=\sum_{j\in\ZZ^3}\mu^3\zeta(\mu(x-j)).
\end{equation}
Thus $\zeta_\mu$ is smooth, nonnegative, and periodic, with unit
spatial mean.

\begin{lem}[Stationary profile]\label{lem:profile}
The profile $\Psi_\mu$ is at least one and satisfies, for every integer
$j\ge0$,
\begin{equation}\label{eq:profile-integrability}
\|\Psi_\mu\|_{L^p}+\|\Psi_\mu\|_{L^1}\le C,
\qquad
\|\nabla^j\Psi_\mu\|_\infty\le C_j\mu^{2+j}.
\end{equation}
Its ball averages are controlled by its value at the center:
\begin{equation}\label{eq:profile-average}
\fint_{B(x,h)}\Psi_\mu(\lambda y)\dd y
\le C\Psi_\mu(\lambda x).
\end{equation}
Consequently, for $K_0(z)=1+\dist(z,0)^{-2}$,
\begin{equation}\label{eq:profile-convolution}
K_0*\Psi_\mu(\lambda\cdot)\le C\Psi_\mu(\lambda\cdot).
\end{equation}
For every $k\in\Lambda$, the profile also satisfies
\begin{equation}\label{eq:profile-tube}
\Psi_\mu(x)\ge c\mu^{2-\gamma/N}
\quad\text{if}\quad \dist(x,L_k)\le\frac1{8\mu}.
\end{equation}

Let $f$ be a smooth scalar function on $\TT^3$, or a smooth vector
field when $\mathcal L=\Rs$. If $z\in L_k$ and
$g=\zeta_\mu(\cdot-z)-1$, then, for $\mathcal L\in\{\Ro,\Rs\}$,
\begin{equation}\label{eq:profile-antidivergence}
\bigl|\mathcal L[fg(\lambda\cdot)](x)\bigr|
\le C\lambda^{-1}\|f\|_{C^1}\mu^{\gamma/N}\Psi_\mu(\lambda x).
\end{equation}
If instead $g=\Psi_\mu-\fint\Psi_\mu$, then
\begin{equation}\label{eq:profile-self-antidivergence}
\bigl|\Ro[fg(\lambda\cdot)](x)\bigr|
\le C\lambda^{-1}\|f\|_{C^1}\Psi_\mu(\lambda x).
\end{equation}
All constants are uniform in $\mu\ge1$, integers $\lambda\ge1$,
$x\in\RR^3$ and $h>0$, using the periodic extension. They may depend
on the fixed exponents $\gamma,N,p$.
\end{lem}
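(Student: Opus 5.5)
The plan is to work throughout with the exponent $\beta=1-\gamma/(2N)\in(0,1)$, so that each summand of $\Psi_\mu$ is $\psi_k=(\mu^{-2}+F_k)^{-\beta}$, and to translate everything into distance to the line sets $Z_k$ through \eqref{eq:Fk}. The pointwise facts come first. Clearly $\Psi_\mu\ge1$. For the tube bound \eqref{eq:profile-tube}: since $L_k\subset Z_k$, a point with $\dist(x,L_k)\le 1/(8\mu)$ has $F_k(x)\le C\mu^{-2}$. Hence $\psi_k(x)\ge c\mu^{2\beta}=c\mu^{2-\gamma/N}$.

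For integrability I will use that near $Z_k$ (a finite union of lines) $\psi_k\le C\dist(x,Z_k)^{-2\beta}$, and that $2\beta p<2$ because $p<p_*(s)$ is extremely close to one. This gives $\|\psi_k\|_{L^p}\le C$ uniformly in $\mu$, and the $L^1$ bound is the case $p=1$. For the derivative bounds, observe that $|\nabla F_k|\le CF_k^{1/2}$ and, more generally, $|\nabla^iF_k|\le C$. By Faà di Bruno, each derivative of $(\mu^{-2}+F_k)^{-\beta}$ costs at most a factor $(\mu^{-2}+F_k)^{-1/2}\le\mu$ relative to the previous one. This gives the bound $C_j\mu^{2\beta+j}\le C_j\mu^{2+j}$.

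Next comes the averaging estimate \eqref{eq:profile-average}, which I expect to be the main obstacle. The idea is that each $\psi_k$ is comparable to the function $\phi(x)=(\mu^{-2}+\dist(x,Z_k)^2)^{-\beta}$, which is (near one line) a decreasing radial function of the distance to that line, with exponent $2\beta<2$ subcritical in the two transverse dimensions. For a radial decreasing weight $r\mapsto(\mu^{-2}+r^2)^{-\beta}$ on $\RR^2$ with $2\beta<2$, the ball average over any disc is at most $C$ times its value at the center. I will verify this in two cases.
\begin{itemize}
\item If $h\le$ the distance $d$ of the center to the line, the weight is comparable on the disc.
\item Otherwise the average is at most $Ch^{-2}\int_{|z|\le 2h}(\mu^{-2}+|z|^2)^{-\beta}\,dz\le C(\mu^{-2}+h^2)^{-\beta}\le C\phi(\text{center})$.
\end{itemize}
Averaging in the extra longitudinal direction of a three-dimensional ball is harmless. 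Several parallel lines are separated by at least $2s_0$, which handles the finitely many components. Large $h\gtrsim1$ is bounded using the $L^1$ bound together with $\Psi_\mu\ge1$. The rescaling $y\mapsto\lambda y$ maps $B(x,h)$ to $B(\lambda x,\lambda h)$, and periodicity keeps everything uniform in $\lambda$ and $x$. The convolution bound \eqref{eq:profile-convolution} then follows by a dyadic decomposition: $K_0\le 1+\sum_\ell 2^{2\ell}\mathbf 1_{|z|\le2^{-\ell}}$. Each shell contributes $2^{2\ell}\cdot 2^{-3\ell}\cdot C\Psi_\mu(\lambda x)$, which sums, and the constant part is controlled by the $L^1$ norm, which is at most $\Psi_\mu$.

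Finally, the antidivergence bounds. For \eqref{eq:profile-self-antidivergence}, write $\Ro[fg(\lambda\cdot)]=f\Ro[g(\lambda\cdot)]+(\text{commutator})$. The commutator is bounded via \eqref{eq:periodic-commutator} by $C\lambda^{-1}\|f\|_{C^1}K_0*|(\Ro g)(\lambda\cdot)|$. The main term equals $\lambda^{-1}f(\Ro g)(\lambda\cdot)$ by \eqref{eq:operator-scaling}. Everything then reduces to the pointwise bound $|\Ro g|\le K_0*\Psi_\mu+C\le C\Psi_\mu$, which follows from the kernel estimate \eqref{eq:operator-kernel} (giving $|\Ro g|\le CK_0*(\Psi_\mu+\fint\Psi_\mu)$) together with \eqref{eq:profile-convolution}, the latter applied twice. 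For \eqref{eq:profile-antidivergence} the argument is the same, with $g=\zeta_\mu(\cdot-z)-1$, and the needed estimate is $|\Ro g|(y)\le C(1+\min(\mu,\dist(y,z+\ZZ^3)^{-1})^2)$. Since $z\in L_k$, this point bound is dominated by $\mu^{\gamma/N}\psi_k$. Indeed $\psi_k\ge c(\mu^{-2}+\dist(y,L_k)^2)^{-\beta}\ge c\,\mu^{-\gamma/N}\min(\mu,\dist(y,z)^{-1})^2$, using $\dist(y,L_k)\le\dist(y,z)$. One more application of \eqref{eq:profile-convolution} for the commutator term completes the proof.
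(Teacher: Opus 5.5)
Your proposal is correct and follows the paper's proof essentially step by step: you compare each summand with $(\mu^{-2}+\dist(x,Z_k)^2)^{-\beta}$, integrate in cylindrical coordinates using $2\beta p<2$, and use the chain rule with $|\nabla F_k|\le CF_k^{1/2}$; you prove the ball-average estimate in regimes and sum it over dyadic shells of $K_0$; and you split the antidivergence into $\lambda^{-1}f(\mathcal L g)(\lambda\cdot)$ plus the commutator \eqref{eq:periodic-commutator}, with the kernel integrated over the bump. One small fix: the weight is only comparable on $B(x,h)$ when $h\le D/2$, not $h\le D$, since for $h$ close to $D$ the ball comes arbitrarily close to $Z_k$; the second regime should then be $h>D/2$ with transverse radius $D+h<3h$, exactly as in the paper.
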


\begin{proof}
The estimates follow from the two-dimensional transverse geometry of
the lines. Put
\[
\psi_k(x)=\bigl(\mu^{-2}+\dist(x,Z_k)^2\bigr)^{-1+\gamma/(2N)}.
\]
By \eqref{eq:Fk}, $\Psi_\mu$ is comparable to $1+\sum_k\psi_k$.
We first estimate the integral and local averages of each $\psi_k$,
then apply these estimates to the antidivergence kernels.

\proofstep{Step 1: Integral bounds and ball averages}
Since $p-1<\gamma/N^{10}$, we have
$(2-\gamma/N)q<2-\gamma/(2N)$ for $q=1,p$. Near each line of
$Z_k$, cylindrical coordinates therefore give
\[
\int\psi_k^q
\le C\int_0^{s_0}(\mu^{-2}+t^2)^{-(1-\gamma/(2N))q}t\dd t+C
\le \frac{CN}{\gamma}.
\]
Away from these lines the profile is uniformly bounded. For its
derivatives, the chain rule and $|\nabla F_k|\le CF_k^{1/2}$ give
\[
\left|\nabla^j(\mu^{-2}+F_k)^{-1+\gamma/(2N)}\right|
\le C_j(\mu^{-2}+F_k)^{-1+\gamma/(2N)-j/2}
\le C_j\mu^{2+j}.
\]
Indeed, first derivatives of $F_k$ contribute powers of $F_k^{1/2}$,
while its higher derivatives are bounded. This proves
\eqref{eq:profile-integrability}.

To prove \eqref{eq:profile-average}, a change of variables reduces the
claim to $\lambda=1$. Write $D=\dist(x,Z_k)$. If $h\le D/2$, the
distance to $Z_k$ is at least $D/2$ on $B(x,h)$, so
$\psi_k(y)\le C\psi_k(x)$ there. If $D/2<h\le s_0/6$, the ball is
contained in a ball of radius $3h$ centered on a single line of $Z_k$.
Therefore
\[
\begin{aligned}
\fint_{B(x,h)}\psi_k
&\le Ch^{-2}\int_0^{3h}(\mu^{-2}+t^2)^{-1+\gamma/(2N)}t\dd t\\
&\le C(\mu^{-2}+h^2)^{-1+\gamma/(2N)}
\le C\psi_k(x).
\end{aligned}
\]
The second inequality follows by considering $h\le\mu^{-1}$ and
$h\ge\mu^{-1}$ separately, using $2-\gamma/N<2$. For $h>s_0/6$, cover
$B(x,h)$ by at most $C(1+h)^3$ unit cubes. The integral bound controls
the average, while periodicity gives a fixed positive lower bound for
$\psi_k(x)$. Summing over $k$ proves the required ball-average estimate.

\proofstep{Step 2: Convolution and the lower bound on the tubes}
On the fundamental domain $x+[-1/2,1/2)^3$, the kernel $K_0$ is
bounded by $C|z|^{-2}$. On the annulus
$2^{-i-1}<|z|\le2^{-i}$, \eqref{eq:profile-average} bounds the
convolution by
\[
C2^{2i}2^{-3i}\Psi_\mu(\lambda x).
\]
Summing over $i\ge0$ gives \eqref{eq:profile-convolution}. If
$\dist(x,L_k)\le1/(8\mu)$, then \eqref{eq:Fk} gives
$F_k(x)\le C\mu^{-2}$. This proves \eqref{eq:profile-tube}.

\proofstep{Step 3: Pointwise antidivergence estimates}
Let $g=\zeta_\mu(\cdot-z)-1$ with $z\in L_k$. The kernels of
$\mathcal L$ and $\Ro$ have zero mean and are bounded by $CK_0$
by Lemma~\ref{lem:operators} and \eqref{eq:operator-kernel}.
The bump $\zeta_\mu(\cdot-z)$ has unit mass and is supported in
$B(z,1/(16\mu))$. Integrating the kernel over this bump gives a bound
$C\mu^2$ when $\dist(x,z)\le1/(8\mu)$, and
$C(1+\dist(x,z)^{-2})$ otherwise. Hence
\[
\begin{aligned}
|\mathcal Lg(x)|+|\Ro g(x)|
&\le C\bigl(\mu^{-2}+\dist(x,z)^2\bigr)^{-1}\\
&\le C\mu^{\gamma/N}\Psi_\mu(x),
\end{aligned}
\]
where the last inequality uses $z\in L_k\subset Z_k$.
Applying \eqref{eq:operator-scaling} gives
\[
|f\mathcal L[g(\lambda\cdot)](x)|
\le C\lambda^{-1}\|f\|_\infty\mu^{\gamma/N}\Psi_\mu(\lambda x).
\]
By \eqref{eq:periodic-commutator}, the commutator is bounded by
\[
C\lambda^{-1}\|f\|_{C^1}\mu^{\gamma/N}
K_0*\Psi_\mu(\lambda\cdot).
\]
Together with \eqref{eq:profile-convolution}, this proves
\eqref{eq:profile-antidivergence}.

For $g=\Psi_\mu-\fint\Psi_\mu$, the zero mean of the kernel gives
\[
|\Ro g|\le CK_0*\Psi_\mu\le C\Psi_\mu.
\]
Now \eqref{eq:operator-scaling}, \eqref{eq:periodic-commutator}, and
\eqref{eq:profile-convolution} give
\eqref{eq:profile-self-antidivergence} in the same way.
\end{proof}

\section{One step of the iteration: the construction}\label{sec:construction}

We construct $(\rho^+,u^+)=(\rho+\theta,u+w)$ from a smooth
approximate solution. The principal perturbations cancel the old
defects. The stationary correction $\theta_d$ adds density where
the new defects, including the viscous error from $\nabla w$, will
cause a subtraction at the following stage. We then arrange the
continuity and momentum equations successively.

\subsection{Setting and statement}

Fix $n\ge0$ and use the sequences $\lambda_n,\delta_n$ chosen above.
Set
\begin{equation}\label{eq:perturbation-scales}
\lambda=\lambda_{n+1},\qquad\mu=\lambda^N,\qquad\omega=\delta_n^{-4}.
\end{equation}
The integers $\lambda,\mu$ are the frequency and concentration, and
$\omega>0$ is the translation speed.
We are given a compact interval $I'$ and an open interval $J'$ with
$I'\Subset J'\subset J$, and put $\tau=\dist(I',\partial J')$. We are
given radii $r_0\le r_-< r_+\le r_B$, and put
$d=(r_+- r_-)/(64N+40)$. Finally, we are given a smooth solution
$Z=(\rho,u,\Pi,R,T)$ of the system of Definition~\ref{def:system} such that $(R,T)$ is supported
in $I'\times\overline B(x_0,r_-)$. At the initial stage $n=0$,
we take $Z=Z_*$.

\begin{prop}[One step]\label{prop:step}
Assume that
\begin{equation}\label{eq:input-bounds}
\begin{gathered}
\|(\rho,u,R,T)\|_{C^{20}_{t,x}}\le\lambda_n^{N^4},\qquad
\|u\|_\infty\le2\delta_n^{-1},\qquad \tau^{-1}+d^{-1}\le\lambda_n,\\
\|(R,T)\|_{C_tL^p}\le2\delta_n^8\quad\text{if }n\ge1.
\end{gathered}
\end{equation}
There is $a_0$, depending only on the fixed parameters, $r_0$, $r_B$
and $Z_*$, with the following property. If $a\ge a_0$ and $Z$ is as
above, there is a smooth solution
$Z^+=(\rho^+,u^+,\Pi^+,R^+,T^+)$ of \eqref{eq:defect} from Definition~\ref{def:system}. Both
$Z^+-Z$ and $(R^+,T^+)$ are supported in
\[
K'\times B\bigl(x_0,\tfrac12(r_-+ r_+)\bigr)
\]
for a compact interval $K'\subset J'$.

The new defects satisfy
\begin{equation}\label{eq:step-defects}
\|(R^+,T^+)\|_{C_tL^p}\le\delta_{n+1}^8,
\end{equation}
and the total density perturbation $\theta=\rho^+-\rho$ satisfies
\begin{equation}\label{eq:step-positivity}
\begin{aligned}
\theta\ge{}&-\delta_n^2|R|-\delta_n^4|T|-\delta_n^2\\
&+\delta_{n+1}^2|R^+|+\delta_{n+1}^4|T^+|.
\end{aligned}
\end{equation}
The density and velocity increments satisfy
\begin{equation}\label{eq:step-increments}
\begin{aligned}
\|\rho^+-\rho\|_{C_tL^p}&\le\delta_n^2,\\
\|u^+-u\|_{C_tH^s}&\le\delta_{n+1}^{16},\qquad
\|u^+\|_\infty\le\delta_{n+1}^{-1}.
\end{aligned}
\end{equation}
If $n\ge1$, the nonlinear quantities and pressure also satisfy
\begin{equation}\label{eq:step-products}
\begin{aligned}
&\|\rho^+u^+-\rho u\|_{C_tL^1}
+\|\rho^+u^+\otimes u^+-\rho u\otimes u\|_{C_tL^1}\\
&\qquad+\|\Pi^+-\Pi\|_{L^1_{t,x}}\le\delta_n^2.
\end{aligned}
\end{equation}
Finally,
\begin{equation}\label{eq:step-derivatives}
\|(\rho^+,u^+,R^+,T^+)\|_{C^3_{t,x}}
+\|\Pi^+-\Pi\|_{C^3_{t,x}}\le\tfrac12\lambda^{N^2}.
\end{equation}
\end{prop}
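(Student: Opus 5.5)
The construction runs in four stages, following the outline of the introduction. First comes a mollification stage, then the principal perturbation built from the moving bumps, then the density corrections $\theta_c,\theta_d$, and finally the new defects.

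\textbf{Mollification and cutoffs.} Mollify $Z$ at a scale $\ell=\lambda_n^{-N^5}$, say. The system \eqref{eq:linear-defect} is linear in $(\rho,m,\Phi,\Pi,u)$, so mollifying the differences $Z-Z_*$ keeps an exact solution and enlarges the support only slightly. This yields $C^j$ bounds of the form $\ell^{-j}\lambda_n^{N^4}$, at a cost in $C_tL^p$ that is negligible next to $\delta_{n+1}^8$.

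Next take the time cutoffs $\eta,\eta_u,\eta_d$ of Lemma~\ref{lem:cutoffs} with respect to $I'\Subset J'$. Add spatial cutoffs on successive shells between $r_-$ and $r_+$ of width $d$; the $64N+40$ shells available are what supply the ``successive spatial cutoffs''.

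\textbf{Principal perturbation.} Define the coefficients by \eqref{eq:intro-cancellation} with $V_k=\omega k$, $\omega=\delta_n^{-4}$.
\begin{itemize}
\item For $k\in\Lambda_R$, take $a_{\pm\bfe_i}=\eta\,\omega^{-1}(\text{const}+(\mp R_i)^+)$, corrected by the $u$-dependent terms.
\item For $k\in\Lambda_T$, apply Lemma~\ref{lem:geometric} to $S=\Id+\omega^{-2}(\dots)$. Here $\omega^{-2}|T|\le\delta_n^8\cdot\lambda_n^{N^4}$ pointwise is not automatically small, so the input bound on $\|u\|_\infty$ and $T$ must be used. I would instead normalize by $\rho$-weighted smooth functions $\sqrt{\delta_n^{-4}+|T|^2}$ so that $S$ stays close to $\Id$.
\end{itemize}
These choices give $\sum_k a_k\le\delta_n^2|R|+\delta_n^4|T|+C\delta_n^{12}$.

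Set $\Theta_k(t,x)=\zeta_\mu(\lambda(x-\omega tk)-z_k)$ with $z_k\in L_k$. Then build a divergence-free $w=\eta_u\curl(\dots)$, of order $\omega$, equal to $V_k-u$ on the supports of $a_k\Theta_k$. I would obtain it from a concentrated stream function along the bump tubes: the bumps are moving points, and the velocity is supported in a $\mu^{-1}/\lambda$ neighbourhood. This gives $\|w\|_{H^s}\lesssim\omega\lambda^s\mu^{s-3/2}\le\delta_{n+1}^{16}$ by the choice of $N$.

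\textbf{Density corrections.} The increment is $\theta_p=\sum_k a_k(\Theta_k-1)$.
\begin{itemize}
\item $\theta_c$ is a small combination of fixed compactly supported functions restoring $\int\theta=0$ and $\int x\theta=0$. These moments of $\theta_p$ are $O(\lambda^{-1})$ by \eqref{eq:periodic-averaging}.
\item $\theta_d=\sum_i c_i\eta_d\chi_i\Psi_\mu(\lambda\cdot)$, with nested spatial cutoffs $\chi_i$ and decreasing coefficients $c_i$, and with $\Psi_\mu$ from Lemma~\ref{lem:profile}.
\end{itemize}
The bound $\|\theta\|_{L^p}\le\delta_n^2$ follows from \eqref{eq:product-norm}, $\|\zeta_\mu\|_{L^p}\le\mu^{3-3/p}$ with $p<p_*$, and \eqref{eq:profile-integrability}.

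\textbf{New defects.} Define $R^+$ by solving $\partial_t\theta+\D(\dots)=\D(R^+-R)$ using the localized operators. The localization is organized as follows.
\begin{itemize}
\item Near the sources, use the periodic operators $\Ro,\Rs$ times a cutoff.
\item Far from the sources, the errors are tiny by Lemma~\ref{lem:far}.
\item Fix the remainder with $\Bog_U$ and $\Sym_U$ (Lemmas~\ref{lem:bog}, \ref{lem:sym}), after arranging the zero-moment conditions through $\theta_c$ and a curl correction, as in the introduction.
\item Transport errors are $(\Theta_k-1)(\partial_t+V_k\cdot\nabla)a_k$, of size $\lambda^{-1}$ after $\Ro$.
\end{itemize}
$T^+$ collects the following terms:
\begin{itemize}
\item the oscillation errors $\sum_k a_k(\Theta_k-1)V_k\otimes V_k$;
\item the linear terms $\theta u\otimes w$ and similar;
\item $-(\nabla w+\nabla w^T)$;
\item $\partial_t$ of the new $R$-part;
\item the mollification errors.
\end{itemize}
Each is $\lesssim\lambda^{-1}\mu^{\gamma/N}\cdot\text{poly}(\lambda_n)$ or concentration-small in $L^p$, which gives \eqref{eq:step-defects}. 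The product bounds \eqref{eq:step-products} and the derivative bounds \eqref{eq:step-derivatives} are then routine, using $\mu=\lambda^N$ and $\|\nabla^j\Psi_\mu\|\le\mu^{2+j}$.

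\textbf{Main obstacle: positivity \eqref{eq:step-positivity}.} Since $\Theta_k\ge0$, we have $\theta_p\ge-\sum_k a_k$, and $|\theta_c|\le\delta_n^2/2$. It therefore suffices to show $\theta_d\ge\delta_{n+1}^2|R^+|+\delta_{n+1}^4|T^+|$ pointwise, and I expect this step to be the hardest.

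The antidivergence pieces are dominated pointwise by $\lambda^{-1}\mu^{\gamma/N}\Psi_\mu(\lambda x)$ via \eqref{eq:profile-antidivergence}, so they are covered by $\theta_d$. The dangerous term is $\nabla w$. It lives on the velocity tubes, which lie within $1/(8\mu\lambda)$ of $L_k$ after the translation. There \eqref{eq:profile-tube} gives $\Psi_\mu\ge c\mu^{2-\gamma/N}$, which beats $\delta_{n+1}^4|\nabla w|\lesssim\delta_{n+1}^4\omega\lambda\mu$.

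For this I would use a profile that is stationary, whereas the bumps move along $L_k$. Taking the bumps on the lines $L_k$ themselves keeps the tubes swept out by the motion inside the tubes around $\lambda^{-1}L_k$. The self-generated error $\Ro\partial_t\theta_d=\Ro(\eta_d'\chi_i\Psi_\mu)$ is bounded by $\lambda^{-1}\Psi_\mu$ by \eqref{eq:profile-self-antidivergence}. Where this localized error leaks outside $\Supp\chi_i$, it is absorbed by the next cutoff $\chi_{i+1}$ with a smaller coefficient. The time leakage is controlled by $|\eta_d'|\le C\tau^{-1}\eta_d^{1-1/(16N)}$. Checking that this cascade closes within the $64N+40$ shells is the crux.
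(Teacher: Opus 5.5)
Your architecture is the paper's: bumps moving along the lines $L_k$ with $u^+=V_k$ on their support, the stationary profile $\Psi_\mu$, nested cutoffs in $\theta_d$, a moment correction, and $\Bog_U$, $\Sym_U$ with a curl for the angular momentum. However, the coefficient step fails as you describe it.

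\textbf{The coefficients.} The positive parts $(\mp R_i)^+$ are not smooth, while the step needs $f_k$ in $C^9$. More seriously, your coordinate weights have $a_{\bfe_i}+a_{-\bfe_i}$ depending on $i$, so they create an anisotropic second moment of size $\omega|R|$. A $\Lambda_T$ normalization that only sees $T$ cannot absorb this while keeping $|S-\Id|<1/5$. For instance, at $n=0$ one has $\omega|R_*|\gg\omega^{1/2}+|T_*|$ wherever $R_*\ne0$.

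Moreover, normalizing by $\sqrt{\delta_n^{-4}+|T|^2}$ puts a floor $\omega^{1/2}=\delta_n^{-2}$ into the second moment. This floor passes into $\pi$ and into the flux increment on the whole support of the weights, which violates \eqref{eq:step-products}. It also contradicts your claimed floor $C\delta_n^{12}$ in $\sum_ka_k$.

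The paper's construction has three parts:
\begin{itemize}
\item One smooth amplitude $\alpha=(4\delta_n^{16}\chi_R^4+|R|^2+\omega^{-2}|T|^2)^{1/2}$, which dominates both $|R|$ and $|T|/\omega$.
\item Coordinate weights $a_{\pm\bfe_i}=\eta^2\alpha\omega^{-1}[1\pm(12u/\omega-R/(2\alpha))\cdot\bfe_i]$. Their sum is independent of $i$, so $\Lambda_R$ contributes only $2\eta^2\omega\alpha\Id$, which goes into the pressure.
\item $T$ normalized by $6\omega\alpha$ inside $S$.
\end{itemize}
Then $\sum_ka_k=24\eta^2\alpha/\omega$ exactly. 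This makes the $u$-dependent first-moment correction explicit and gives $|\pi|\lesssim\omega\delta_n^8+\omega|R|+|T|$.

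\textbf{The cascade.} The cascade you call the crux needs two quantitative mechanisms that you do not supply.
\begin{itemize}
\item \emph{In space.} One has $|\Ro[\chi_m(\Psi_\mu-\fint\Psi_\mu)(\lambda\cdot)]|\le C\lambda^{-1}d^{-1}\Psi_\mu(\lambda x)$ near $\Supp\chi_m$, where $\chi_{m+1}=1$. At distance $d$ it is at most $\lambda^{-3}$ by Lemma~\ref{lem:far}. So the coefficients must drop by a ratio between $\lambda^{-1}d^{-1}$ and one; the paper uses $\lambda^{-1/2}$. The chain may stop only when the last coefficient times $\lambda^{-1}\|\Psi_\mu\|_\infty\sim\lambda^{2N-1}$ fits into the uniform slack of \eqref{eq:step-positivity}. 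This is why there are $8N+1$ levels, ending with coefficient $\lambda^{-4N}$.
\item \emph{In time.} Near the edge of $\Supp\eta_d$, $\eta_d E$ is weaker than $|\eta_d'|$. The bound $|\eta_d'|\le C\tau^{-1}\eta_d^{1-1/(16N)}$ enters through Young's inequality $\eta_d^{1-1/(16N)}\lambda^{-1/4}\le\eta_d/8+C\lambda^{-4N}$. The remainder is again paid by $\|E\|_\infty\lesssim\mu^2=\lambda^{2N}$.
\end{itemize}

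\textbf{Smaller corrections.}
\begin{itemize}
\item Estimate \eqref{eq:profile-self-antidivergence} applies to $\Psi_\mu-\fint\Psi_\mu$, not to $\Psi_\mu$. With your uncentered $\theta_d$, the correction $\theta_c$ must also remove a mass of order $c_0$, not only the $O(\lambda^{-1})$ moments of $\theta_p$. The paper instead centers each summand by $c_m$, with $|c_m-\fint\Psi_\mu|\lesssim\lambda^{-1}d^{-1}$.
\item The term $\sum_ka_k(\Theta_k-1)V_k\otimes V_k$ cannot be part of $T^+$, since it is of order $\delta_n^4$ in $L^p$. Combined with $\partial_t$ of the momentum increment it yields $\sum_k(\Theta_k-1)f_kV_k$, and only $\chi\Rs[(\Theta_k-1)f_kV_k]$ enters $T^+$.
\item The mollification is unnecessary, because the hypotheses already give $C^{20}$ bounds. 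Its cost $\lambda_n^{N^4-N^5}$ is far above $\delta_{n+1}^8=\lambda_n^{-8\gamma N^8}$. It is harmless only because the mollified defects are exactly what the perturbation cancels, so no mollification error may be left in $T^+$.
\end{itemize}
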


The positive terms in \eqref{eq:step-positivity} allow for the
density removed when correcting $R^+,T^+$ at the following stage.
The rest of this section defines $Z^+$ and verifies the equations;
Section~\ref{sec:estimates} proves the estimates. In both sections, $C$
denotes constants depending only on the fixed parameters, $r_0$, $r_B$
and, where indicated by $C_*$, on $Z_*$; they never depend on $n$ or
$a$. The lower bound for $a$ is uniform in $n$.

\subsection{Cutoffs for the principal perturbations}\label{sec:balls}

We first localize the coefficients and the velocity perturbation.
For $j\ge0$ write
\[
B_j=B(x_0,r_-+2jd).
\]
All spatial cutoffs below take values in $[0,1]$ and satisfy
$\|\cdot\|_{C^l}\le C_ld^{-l}$ for every integer $l\ge0$.
Choose
\begin{equation}\label{eq:principal-cutoffs}
\begin{aligned}
&\chi_R\in C^\infty_c(B_1),
&&\chi_R=1\quad\text{on }B(x_0,r_-+d),\\
&\chi_u\in C^\infty_c(B_3),
&&\chi_u=1\quad\text{on }B_2.
\end{aligned}
\end{equation}
Let $\eta,\eta_u$ be the time cutoffs of Lemma~\ref{lem:cutoffs}
for $I'\Subset J'$. Thus $\chi_u=1$ near $\Supp\chi_R$ and
$\eta_u=1$ near $\Supp\eta$. These choices ensure that the
velocity cutoffs do not alter the cancellation on the density
profiles.

\subsection{Cancellation of the defects}

We seek nonnegative scalar weights $a_k$ on $[0,1]\times\TT^3$
whose first and second velocity moments cancel $R$ and $T$. Set the constant vectors
$V_k=\omega k$ for the directions $k\in\Lambda$ fixed after
Lemma~\ref{lem:geometric}. Proposition~\ref{prop:coefficients}
below states the cancellation identities.
The coordinate directions will produce the first moment. Opposite
pairs from $\Lambda_T$ have zero first moment and allow us to adjust
the symmetric second moment independently.

To keep the weights nonnegative and smooth at zeros of the defects,
define the nonnegative scalar function
\begin{equation}\label{eq:coefficients}
\alpha(t,x)=\bigl(4\delta_n^{16}\chi_R(x)^4+|R(t,x)|^2+\omega^{-2}|T(t,x)|^2\bigr)^{1/2}.
\end{equation}
Define the symmetric matrix field of trace three
\begin{equation}\label{eq:normalized-stress}
S(t,x)=\Id+\frac4{\omega^2}
\Bigl(u\otimes u-\frac{|u|^2}{3}\Id\Bigr)
-\frac{T-\frac13\Tr(T)\Id}{6\omega\alpha}.
\end{equation}
The quotients $R/\alpha$ and $T/\alpha$ are set to zero where
$R=T=0$. Using the affine functions of
Lemma~\ref{lem:geometric}, define
\begin{equation}\label{eq:density-coefficients}
\begin{aligned}
a_k(t,x)&=\frac{\eta(t)^2\alpha}{\omega}
\left[1+\left(\frac{12u}{\omega}-\frac{R}{2\alpha}\right)\cdot k\right],
&&k\in\Lambda_R,\\
a_k(t,x)&=\frac{3\eta(t)^2\alpha}{\omega}\Gamma_k(S),
&&k\in\Lambda_T.
\end{aligned}
\end{equation}
The scalar part of the second moment is absorbed into the pressure
through the scalar function
\begin{equation}\label{eq:pressure-coefficient}
\pi(t,x)=\eta(t)^2\left[8\omega\alpha
\left(1-\frac{|u|^2}{\omega^2}\right)+\frac13\Tr T\right].
\end{equation}
Here and below all direction indices range over $\Lambda$.

Define the smooth scalar fields
\begin{equation}\label{eq:transport-coefficient}
f_k(t,x)=(\partial_t+V_k\cdot\nabla)a_k(t,x).
\end{equation}
They are supported in $\Supp\eta\times B_1$.

\begin{prop}[Coefficients]\label{prop:coefficients}
For sufficiently large $a>0$, the functions $a_k$ and $\pi$ are smooth and supported
in $\Supp\eta\times B_1$, and $a_k\ge0$. They satisfy
\begin{equation}\label{eq:coefficient-moments}
\begin{aligned}
\sum_k a_k(V_k-u)&=-R,\\
\sum_k a_k(V_k\otimes V_k-u\otimes u)&=\pi\Id-T.
\end{aligned}
\end{equation}
and
\begin{equation}\label{eq:coefficient-sum}
\sum_k a_k=\frac{24\eta^2\alpha}{\omega}.
\end{equation}
Their pointwise sizes obey
\begin{equation}\label{eq:coefficient-bounds}
\begin{aligned}
\sum_k a_k
&\le\frac{24}{\omega}
\left(2\delta_n^8+|R|+\frac{|T|}{\omega}\right),\\
|\pi|&\le C\bigl(\omega\delta_n^8+\omega|R|+|T|\bigr).
\end{aligned}
\end{equation}
For $0\le j\le10$, they also satisfy
\begin{equation}\label{eq:coefficient-derivatives}
\|a_k\|_{C^j_{t,x}}\le\delta_{n+1}^{-1}\omega^{-1},
\qquad
\|\pi\|_{C^j_{t,x}}\le\delta_{n+1}^{-1}\omega.
\end{equation}
The material derivatives $f_k$ satisfy
\begin{equation}\label{eq:transport-coefficient-bounds}
\|f_k\|_{C^j_{t,x}}
\le C\omega\|a_k\|_{C^{j+1}_{t,x}}
\le C\delta_{n+1}^{-1},\qquad 0\le j\le9.
\end{equation}
\end{prop}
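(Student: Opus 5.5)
The plan is to verify each claim of Proposition~\ref{prop:coefficients} directly from the definitions \eqref{eq:coefficients}--\eqref{eq:pressure-coefficient}, in four steps: smoothness and support, the moment identities, positivity, and the quantitative bounds. The structural input is that $(R,T)$ is supported in $I'\times\overline B(x_0,r_-)$, while $\chi_R=1$ on $B(x_0,r_-+d)$ and $\eta=1$ near $I'$.

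\textbf{Smoothness and support.} On $B(x_0,r_-+d)$ we have $\alpha\ge2\delta_n^8>0$, so $\alpha$ is smooth there as the square root of a smooth positive function. Outside $\overline B(x_0,r_-)$ the fields $R,T$ vanish identically on an open set, so there $\alpha=2\delta_n^8\chi_R^2$, which is again smooth. Hence $\alpha$ is smooth and supported in $B_1$. The quotients $R/\alpha$ and $T/\alpha$ are smooth because they vanish identically outside $\overline B(x_0,r_-)$, while $\alpha$ is bounded below on a neighbourhood of that set. The factor $\eta^2$ then places $a_k$ and $\pi$ in $\Supp\eta\times B_1$, and the same holds for $f_k$.

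\textbf{Moment identities.} These are pure algebra.
\begin{itemize}
\item For $k\in\Lambda_R$, the pairs $\pm\bfe_i$ give $\sum_{\pm}(\pm\bfe_i)\,(v\cdot(\pm\bfe_i))=2v_i\bfe_i$. Hence
\[
\sum_{\Lambda_R}a_kV_k=\eta^2\alpha\Bigl(\tfrac{24u}{\omega}-\tfrac{R}{\alpha}\Bigr),\qquad \sum_{\Lambda_R}a_k=\tfrac{6\eta^2\alpha}{\omega}.
\]
\item For $k\in\Lambda_T$, opposite directions carry equal weights, so their first moments cancel. Since $\Tr(\xi\otimes\xi)=1$, Lemma~\ref{lem:geometric} gives $\sum_{ij,\pm}\Gamma^\pm_{ij}(S)=\Tr S=3$. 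Hence $\sum_{\Lambda_T}a_k=18\eta^2\alpha/\omega$, which proves \eqref{eq:coefficient-sum}.
\item The first identity becomes $\sum_ka_k(V_k-u)=-\eta^2R=-R$, using $\eta=1$ on the support of $R$.
\item For the second moment, the $\Lambda_R$ directions contribute $2\eta^2\alpha\omega\Id$, since the linear terms cancel in opposite pairs. By \eqref{eq:geometric-decomposition} the $\Lambda_T$ directions contribute $6\eta^2\alpha\omega S$. Subtracting $\frac{24\eta^2\alpha}{\omega}u\otimes u$ and inserting \eqref{eq:normalized-stress}, the $u\otimes u$ terms cancel. What remains is $\eta^2\bigl[8\alpha\omega(1-|u|^2/\omega^2)+\frac13\Tr T\bigr]\Id-\eta^2T=\pi\Id-T$.
\end{itemize}

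\textbf{Positivity.} Since $|R|\le\alpha$ and $|T|\le\omega\alpha$, we get $|R|/(2\alpha)\le\frac12$ and $|T-\frac13\Tr T\,\Id|/(6\omega\alpha)\le\frac16$. With $|u|\le2\delta_n^{-1}$ and $\omega=\delta_n^{-4}$, the $u$-terms are $O(\delta_n^3)$. Hence $|S-\Id|\le\frac16+C\delta_n^6<\frac15$ and $|(12u/\omega-R/2\alpha)\cdot k|<1$ once $a$ is large. Lemma~\ref{lem:geometric} then gives $a_k\ge0$. The pointwise bounds \eqref{eq:coefficient-bounds} follow from $\alpha\le2\delta_n^8+|R|+|T|/\omega$, and for $\pi$ additionally from $|u|^2/\omega^2\le1$.

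\textbf{Derivative bounds (main obstacle).} The difficulty is controlling derivatives of the quotients by $\alpha$. Write $\alpha=\delta_n^8\tilde\alpha$ with
\[
\tilde\alpha=\bigl(4\chi_R^4+\delta_n^{-16}(|R|^2+\omega^{-2}|T|^2)\bigr)^{1/2}.
\]
Then argue separately on two regions.
\begin{itemize}
\item On a neighbourhood of $\overline B(x_0,r_-)$ we have $\tilde\alpha\ge2$. Apply Lemma~\ref{lem:calculus} with $\Gamma=\sqrt{\cdot}$ or $\Gamma=1/\sqrt{\cdot}$, which are smooth with bounded derivatives on $[4,\infty)$. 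Each derivative then costs at most $\delta_n^{-C}\lambda_n^{N^4}d^{-1}$ by \eqref{eq:input-bounds}.
\item Away from $\overline B(x_0,r_-)$ we have $\alpha=2\delta_n^8\chi_R^2$, and $R/\alpha$, $T/\alpha$ vanish, so the derivatives are bounded by $Cd^{-j}$.
\end{itemize}
Combining, $\|a_k\|_{C^j}\le C\omega^{-1}\lambda_n^{C_jN^4}$ and $\|\pi\|_{C^j}\le C\omega\lambda_n^{C_jN^4}$ for $j\le10$. Both are below $\delta_{n+1}^{-1}\omega^{\mp1}$ because $\delta_{n+1}^{-1}=\lambda_n^{\gamma N^8}\ge\lambda_n^{N^7}$, using $\gamma N\ge1$. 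Finally, $|V_k|=\omega$ gives $\|f_k\|_{C^j}\le C\omega\|a_k\|_{C^{j+1}}\le C\delta_{n+1}^{-1}$ for $j\le9$.
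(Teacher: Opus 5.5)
Your proposal is correct and follows essentially the same route as the paper. The ingredients match step by step: the two-region cover for smoothness and support ($\chi_R=1$ near $\overline B(x_0,r_-)$, and $R=T=0$ outside it); the opposite-pair and trace computations with Lemma~\ref{lem:geometric} for the moment identities and \eqref{eq:coefficient-sum}; and the bounds $|R|\le\alpha$, $|T|\le\omega\alpha$ for positivity. The derivative estimate is likewise the paper's: a composition bound on the region where $\alpha\ge2\delta_n^8$, with the polynomial loss in $\lambda_n$ absorbed into $\delta_{n+1}^{-1}=\lambda_n^{\gamma N^8}$. Your normalization $\alpha=\delta_n^8\tilde\alpha$ with $\sqrt{\cdot}$ and $1/\sqrt{\cdot}$ on $[4,\infty)$ is only a repackaging of the paper's use of $z\mapsto z/(1+|z|^2)^{1/2}$ and $(1+|z|^2)^{1/2}$ with $z=(R,T/\omega)/(2\delta_n^8)$.
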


\begin{proof}
We verify the algebraic identities and positivity first, then estimate
the derivatives of the coefficients.

\proofstep{Step 1: Positivity and smoothness of the weights}
The open sets
\[
O_1=B(x_0,r_-+d),\qquad
O_2=\TT^3\setminus\overline B(x_0,r_-)
\]
cover the torus. By \eqref{eq:principal-cutoffs} and
\eqref{eq:coefficients}, on $O_1$ the cutoff $\chi_R$ is one and
$\alpha\ge2\delta_n^8$. On $O_2$, the defects vanish and
$\alpha=2\delta_n^8\chi_R^2$. It follows that $\alpha$, $R/\alpha$
and $T/(\omega\alpha)$ are smooth. In addition,
\[
|R|\le\alpha,\qquad |T|\le\omega\alpha.
\]
By \eqref{eq:input-bounds} and $\omega=\delta_n^{-4}$, for sufficiently large $a>0$,
\[
\frac{12|u|}{\omega}\le24\delta_n^3\le\frac14,
\qquad
\frac{4|u|^2}{\omega^2}\le16\delta_n^6\le10^{-3}.
\]
Consequently,
\[
\left|\frac{12u}{\omega}-\frac{R}{2\alpha}\right|\le\frac34,
\qquad
|S-\Id|\le10^{-3}+\frac16<0.17.
\]
The coordinate weights are therefore nonnegative, as are the geometric
weights by Lemma~\ref{lem:geometric}. Their spatial support is contained
in $B_1$, since both the defects and $\chi_R$ vanish outside this
ball. The factor $\eta^2$ gives the temporal support.

\proofstep{Step 2: The continuity defect}
For $i=1,2,3$, \eqref{eq:density-coefficients} gives
\[
a_{\bfe_i}+a_{-\bfe_i}=\frac{2\eta^2\alpha}{\omega}
\]
and
\[
\omega(a_{\bfe_i}-a_{-\bfe_i})
=\eta^2\left(\frac{24\alpha}{\omega}u-R\right)\cdot\bfe_i.
\]
For $k\in\Lambda_T$, the weights are equal on opposite directions, so
their first moments cancel. Taking the trace in
Lemma~\ref{lem:geometric} and using $\Tr S=3$ gives
\[
\sum_{k\in\Lambda_T}a_k
=\frac{6\eta^2\alpha}{\omega}\Tr S
=\frac{18\eta^2\alpha}{\omega}.
\]
Adding the two groups yields
\[
\sum_k a_k=\frac{24\eta^2\alpha}{\omega},
\qquad
\sum_k a_kV_k=\left(\sum_k a_k\right)u-\eta^2R.
\]
As $\eta=1$ wherever $R\ne0$, this is the first identity in
\eqref{eq:coefficient-moments}.

\proofstep{Step 3: The stress}
By \eqref{eq:density-coefficients} and
Lemma~\ref{lem:geometric}, the two groups contribute
\[
\sum_{k\in\Lambda_R}a_kV_k\otimes V_k
=2\eta^2\omega\alpha\Id,
\qquad
\sum_{k\in\Lambda_T}a_kV_k\otimes V_k
=6\eta^2\omega\alpha S.
\]
Insert \eqref{eq:normalized-stress} and use $\eta^2T=T$ to obtain
\[
\begin{aligned}
\sum_k a_kV_k\otimes V_k
={}&8\eta^2\omega\alpha\Id
+\frac{24\eta^2\alpha}{\omega}
\left(u\otimes u-\frac{|u|^2}{3}\Id\right)\\
&-\left(T-\frac{\Tr T}{3}\Id\right)\\
={}&\left(\sum_k a_k\right)u\otimes u+\pi\Id-T.
\end{aligned}
\]
The last equality uses \eqref{eq:coefficient-sum} and
\eqref{eq:pressure-coefficient}, and proves the second identity in
\eqref{eq:coefficient-moments}. Finally, \eqref{eq:coefficients} gives
\[
\alpha\le2\delta_n^8\chi_R^2+|R|+\frac{|T|}{\omega},
\]
which gives \eqref{eq:coefficient-bounds}, using $|u|\le\omega$ for
the pressure bound.
\proofstep{Step 4: Derivative estimates}
All norms in this step are $C^{10}_{t,x}$ norms. By
\eqref{eq:input-bounds}, the input fields, $\delta_n^{-8}$, and the
inverse support margins are bounded by $\lambda_n^{N^4}$.
Lemma~\ref{lem:cutoffs} and the derivative bounds for the spatial
cutoffs give
\[
\|\eta^2\|+\|\chi_R^i\|\le C\lambda_n^{10N^4}
\qquad(i=2,4).
\]
On $O_1$, the norm of $(R,T/\omega)/(2\delta_n^8)$ is at most
$C\lambda_n^{2N^4}$. The maps
$z\mapsto z/(1+|z|^2)^{1/2}$ and
$z\mapsto(1+|z|^2)^{1/2}$ have bounded positive-order derivatives.
Lemma~\ref{lem:calculus} therefore gives the coarse bound
\[
\|\alpha\|+\|R/\alpha\|+\|T/(\omega\alpha)\|
\le C\lambda_n^{30N^4}.
\]
On $O_2$, the same bound follows from
$\alpha=2\delta_n^8\chi_R^2$ and $R=T=0$. The affine weights of
Lemma~\ref{lem:geometric} and \eqref{eq:normalized-stress} then give
$\|S\|+\|\Gamma_k(S)\|\le C\lambda_n^{30N^4}$.
Applying the product estimate of Lemma~\ref{lem:calculus} to
\eqref{eq:density-coefficients} and \eqref{eq:pressure-coefficient},
we obtain
\[
\omega\|a_k\|+\omega^{-1}\|\pi\|
\le C\lambda_n^{100N^4}
\le\lambda^{\gamma/2}\le\delta_{n+1}^{-1}.
\]
Here $\lambda=\lambda_n^b$ and $100N^4<\gamma b/2$, so the
penultimate inequality holds
for sufficiently large $a>0$. This proves \eqref{eq:coefficient-derivatives}.
Since $V_k$ is constant, the product rule gives
\eqref{eq:transport-coefficient-bounds}.
\end{proof}

\subsection{Moving profiles and the velocity perturbation}

The velocity perturbation must equal $V_k-u$ wherever the $k$ th
density profile is present. We obtain this identity by taking a local
vector potential of $V_k-u$ and applying a cutoff inside a curl. The
curl preserves incompressibility, and the cutoff is constant on the
smaller region occupied by the profile.

Let $y_k$, $L_k$ and $s_*$ be as in Lemma~\ref{lem:lines}. We use
the bump $\zeta$, cutoff $\chi_*$, and periodic profile
$\zeta_\mu$ fixed before Lemma~\ref{lem:profile}. Define the
smooth scalar profiles
\begin{equation}\label{eq:traveling-profiles}
\Theta_k(t,x)=\zeta_\mu\bigl(\lambda x-\lambda\omega t k-y_k\bigr),\qquad
V_k=\omega k .
\end{equation}
Choose representatives $y_k\in\RR^3$. For $j\in\ZZ^3$ let $X_{k,j}(t)=\lambda^{-1}(j+y_k)+\omega t k$.
Note that $\lambda X_{k,j}(t)\in L_k$ for all $t$, since $L_k$ is
invariant under translations along $ k$. Using the periodic lift of
$u$, define
\begin{equation}\label{eq:velocity-blocks}
W_k(t,x)=\eta_u(t)\sum_{j\in\ZZ^3}\curl_x\Bigl[\chi_u(x)\,
\chi_*\bigl(\lambda\mu(x-X_{k,j})\bigr)A_{k,j}(t,x)\Bigr],
\end{equation}
\[
A_{k,j}(t,x)=\int_0^1s\,v_k\bigl(t,X_{k,j}+s(x-X_{k,j})\bigr)\times(x-X_{k,j})\dd s,
\qquad v_k(t,x)=\omega k-u(t,x).
\]
The vector field $v_k$ is smooth and divergence-free, and $A_{k,j}$
is its local vector potential, as verified below. The sum in
\eqref{eq:velocity-blocks} is locally finite and defines a smooth
periodic vector field. Put $w(t,x)=\sum_kW_k(t,x)$ and $u^+=u+w$.

\begin{lem}[Moving profiles and velocity perturbation]\label{lem:blocks}
The profiles $\Theta_k$ satisfy
\begin{equation}\label{eq:moving-profile-bounds}
\Theta_k\ge0,\qquad \fint\Theta_k=1,\qquad
\|\Theta_k\|_{L^q}\le C\mu^{3-3/q}\quad(1\le q\le\infty).
\end{equation}
\begin{equation}\label{eq:profile-transport}
\partial_t\Theta_k+V_k\cdot\nabla\Theta_k=0.
\end{equation}
For sufficiently large $a>0$, each $W_k$ is divergence-free and has
zero spatial mean.
Its support is contained in
\[
B_3\cap\bigcup_j B\bigl(X_{k,j}(t),(8\lambda\mu)^{-1}\bigr)
\subset\mathcal C_k,
\]
where the time-independent tubes are
\[
\mathcal C_k=\{x\in\TT^3:\dist(\lambda x,L_k)\le1/(8\mu)\}.
\]
The tubes $\mathcal C_k$ are pairwise disjoint, and
\begin{equation}\label{eq:velocity-support}
|\Supp W_k(t)|\le C\mu^{-3}.
\end{equation}
On the density profiles the total velocity is exact:
\begin{equation}\label{eq:velocity-on-support}
a_k\Theta_k(u^+-V_k)=0.
\end{equation}
For $0\le j\le10$,
\begin{equation}\label{eq:velocity-derivatives}
\begin{aligned}
|\nabla^jW_k|
&\le C\omega(\lambda\mu)^j\mathbf1_{\Supp W_k},\\
\|W_k\|_{C^j_{t,x}}
&\le C\omega(\lambda\mu\omega)^j.
\end{aligned}
\end{equation}
Finally, the full perturbation satisfies
\begin{equation}\label{eq:velocity-norms}
|w|\le C\omega,\qquad
\|w\|_{C_tH^s}\le\delta_{n+1}^{16}.
\end{equation}
\end{lem}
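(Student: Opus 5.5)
The profile bounds and the transport identity are direct consequences of the definitions, so I will dispose of them first. By \eqref{eq:concentrated-bump}, $\zeta_\mu$ is a nonnegative periodic function with unit mean. Composing with the map $x\mapsto\lambda x-\lambda\omega tk-y_k$, for integer $\lambda$, preserves nonnegativity, the mean and every $L^q$ norm. Scaling gives $\|\zeta_\mu\|_{L^q}=\mu^{3-3/q}\|\zeta\|_{L^q}$, which is \eqref{eq:moving-profile-bounds}. Since $\Theta_k$ depends on $(t,x)$ only through $x-\omega tk$, the chain rule gives \eqref{eq:profile-transport}.

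For $W_k$, each summand is a curl, so $W_k$ is divergence-free. It also has zero mean, since it is the curl of a smooth periodic field; the sum over $j$ is locally finite by the factor $\chi_*(\lambda\mu\cdot)$.

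\textbf{Support and disjointness.} The factor $\chi_*$ restricts the support to $B(X_{k,j},(8\lambda\mu)^{-1})$, and $\chi_u$ restricts it to $B_3$. Because $\lambda X_{k,j}\in L_k$, the set $\lambda B(X_{k,j},(8\lambda\mu)^{-1})$ lies within distance $1/(8\mu)$ of $L_k$, which places the support in $\mathcal C_k$. The tubes $\mathcal C_k$ are disjoint once $1/(4\mu)<s_*$, using Lemma~\ref{lem:lines}. For the measure bound \eqref{eq:velocity-support}, the points $\lambda X_{k,j}(t)$ modulo $\ZZ^3$ form a single point, so the support in $\TT^3$ consists of $\lambda^3$ balls of radius $(8\lambda\mu)^{-1}$. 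Their total volume is $C\mu^{-3}$.

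\textbf{Exactness on profiles.} This is the key algebraic point. Take $\Theta_k(t,x)\ne0$. Then $\lambda x-\lambda\omega tk-y_k\in B(j,1/(16\mu))$ for some $j$, that is, $|x-X_{k,j}(t)|<(16\lambda\mu)^{-1}$, and there $\chi_*(\lambda\mu(x-X_{k,j}))=1$. The other summands vanish near $x$, because the balls are disjoint when $\mu\ge1$. If moreover $a_k\ne0$, then $x\in B_1$ and $t\in\Supp\eta$, so $\chi_u=\eta_u=1$ near $(t,x)$ by \eqref{eq:principal-cutoffs} and Lemma~\ref{lem:cutoffs}. Hence $W_k=\curl A_{k,j}$ near $(t,x)$. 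The classical Poincaré homotopy formula for a divergence-free field $v_k$ gives $\curl A_{k,j}=v_k=V_k-u$; I will verify this by the standard computation $\curl(\int_0^1 s\,v(sy)\times y\,ds)=\int_0^1(s^2\,\tfrac{d}{ds}[\,\cdot\,]+2s\,\cdot)\ldots=v$, which uses $\D v=0$. The remaining $W_{k'}$ with $k'\ne k$ vanish there by disjointness of the tubes. So $u+w=V_k$ and \eqref{eq:velocity-on-support} holds.

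\textbf{Estimates.} I expect the quantitative bounds to be the main obstacle, specifically making the $H^s$ smallness consistent with the parameters. On a ball of radius $r=(8\lambda\mu)^{-1}$, I have $|A_{k,j}|\le Cr\|v_k\|_\infty\le Cr\omega$, using $\|u\|_\infty\le2\delta_n^{-1}\le\omega$. Each $x$-derivative of $A_{k,j}$ costs either $\nabla v_k$, bounded by $\lambda_n^{N^4}\ll\lambda\mu$, or a factor $r^{-1}$ when it falls on $x-X_{k,j}$ or $\chi_*$. Derivatives of $\chi_u$ cost $d^{-1}\le\lambda_n$. This gives $|\nabla^jW_k|\le C\omega(\lambda\mu)^j$ on the support. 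Time derivatives add a factor $\lambda\mu\omega$ through $\dot X_{k,j}=\omega k$, plus the smaller factors $\tau^{-1}$ and $\partial_tu$, which yields \eqref{eq:velocity-derivatives} and $|w|\le C\omega$.

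For $H^s$ I interpolate: $\|W_k\|_{L^2}\le C\omega\mu^{-3/2}$ by \eqref{eq:velocity-support}, and $\|W_k\|_{H^1}\le C\omega\lambda\mu\,\mu^{-3/2}$. Together these give $\|w\|_{H^s}\le C\omega\lambda^s\mu^{s-3/2}=C\delta_n^{-4}\lambda^{s-N(3/2-s)}$. Since $N(3/2-s)>s+2$, this is at most $C\delta_n^{-4}\lambda^{-2}$. Because $\delta_n^{-4}=\lambda_n^{4\gamma}\le\lambda$ and $\lambda^{-1}\le\lambda^{-16\gamma}/C$ for large $a$, the result is at most $\delta_{n+1}^{16}$.
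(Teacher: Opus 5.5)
Your proposal follows the paper's proof closely: the same scaling and chain-rule facts for $\Theta_k$, the same support, tube and disjointness argument (with $1/(4\mu)<s_*$), and the same exactness argument. That argument uses the potential $A_{k,j}$, the identity $\curl_xA_{k,j}=v_k$ from $\D v_k=0$, and the cutoffs $\chi_u\chi_*\eta_u\equiv1$ near $\Supp(a_k\Theta_k)$. The derivative counting is also the same.

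There is one concrete slip, in the Sobolev step. You interpolate between $L^2$ and $H^1$, but that only controls $H^s$ for $s\le1$. The lemma has to hold for every $0<s<3/2$, and the range $1<s<3/2$ is exactly the one the paper emphasizes.

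The fix is what the paper does: use the $\dot H^2$ endpoint instead. The $j=2$ case of your pointwise bound, $|\nabla^2W_k|\le C\omega(\lambda\mu)^2\mathbf 1_{\Supp W_k}$, together with $|\Supp W_k(t)|\le C\mu^{-3}$, gives $\|\nabla^2W_k\|_{L^2}\le C\omega(\lambda\mu)^2\mu^{-3/2}$. Since $W_k$ has zero mean,
\[
\|W_k\|_{\dot H^s}\le C\|W_k\|_{L^2}^{1-s/2}\|W_k\|_{\dot H^2}^{s/2}\le C\omega\mu^{-3/2}(\lambda\mu)^s\qquad(0\le s\le2).
\]
With this endpoint, your arithmetic goes through unchanged:
\[
C\omega\lambda^{s-N(3/2-s)}\le C\delta_n^{-4}\lambda^{-2}\le C\lambda^{-1}\le\delta_{n+1}^{16}.
\]
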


\begin{proof}
The main point is the identity \eqref{eq:velocity-on-support}.
The curl construction gives zero divergence automatically; we show
that it agrees with $V_k-u$ wherever $a_k\Theta_k$ is nonzero. The
Sobolev estimate then follows from the size and support of the blocks.

\proofstep{Step 1: Support and incompressibility}
The properties of $\Theta_k$ follow from its definition and integer
periodic scaling. The support statement follows from \eqref{eq:velocity-blocks}, because
$\Supp\chi_*\subset B_{1/8}$ and $\lambda X_{k,j}\in L_k$. The tubes are
disjoint once $1/(4\mu)<s_*$. The balls around $X_{k,j}(t)$,
$j\in\{0,\dots,\lambda-1\}^3$, are $\lambda^3$ balls of radius
$(8\lambda\mu)^{-1}$ in $\TT^3$, which proves
\eqref{eq:velocity-support}. Each $W_k$ is the curl of a smooth
periodic field, which gives zero divergence and zero mean.

\proofstep{Step 2: Exact transport on each profile}
For a divergence-free field $v$ and a point $X$, the identity
$\curl(a\times b)=a\D b-b\D a+(b\cdot\nabla)a-(a\cdot\nabla)b$ gives
\[
\begin{aligned}
&\curl_x\bigl[s\,v(X+s(x-X))\times(x-X)\bigr]\\
&\quad=2s\,v(X+s(x-X))
+s^2\bigl((x-X)\cdot\nabla\bigr)v(X+s(x-X))\\
&\quad=\frac{\dd}{\dd s}\bigl[s^2v(X+s(x-X))\bigr].
\end{aligned}
\]
Integrating in $s$ gives $\curl_xA_{k,j}=v_k$. On $\Supp\Theta_k$ the point $x$ lies in some
$B(X_{k,j},(16\lambda\mu)^{-1})$, where $\chi_*(\lambda\mu(\cdot-X_{k,j}))=1$
on a neighbourhood. By Proposition~\ref{prop:coefficients},
$\Supp a_k\subset\Supp\eta\times B_1$. Thus
\eqref{eq:principal-cutoffs} and Lemma~\ref{lem:cutoffs} give
$\chi_u=\eta_u=1$ on a neighbourhood of $\Supp a_k$. Thus $W_k=v_k=\omega k-u$ on $\Supp(a_k\Theta_k)$, while
$W_{k'}=0$ there for $k'\ne k$ by disjointness of the tubes. Hence $u^+=V_k$ there.

\proofstep{Step 3: Derivative estimates}
Fix $X=X_{k,j}(t)$ and $|x-X|\le(8\lambda\mu)^{-1}$. By
\eqref{eq:input-bounds},
$\|u\|_{C^h_{t,x}}\le\lambda_n^{N^4}\le\omega(\lambda\mu)^h$
for $1\le h\le20$, and $|v_k|\le2\omega$; hence
$\|\nabla^hv_k\|_\infty\le2\omega(\lambda\mu)^h$. Differentiating
$A_{k,j}$ under the integral gives
\[
\begin{aligned}
|\nabla^hA_{k,j}|
&\le C\bigl(|x-X|\|\nabla^hv_k\|_\infty
+\|\nabla^{h-1}v_k\|_\infty\bigr)\\
&\le C\omega(\lambda\mu)^{h-1}.
\end{aligned}
\]
For $h=0$ the second term is absent. Writing
$\chi_{*,j}=\chi_*(\lambda\mu(x-X))$, the product rule gives
\[
\begin{aligned}
\curl_x[\chi_u\chi_{*,j}A_{k,j}]
={}&\chi_u\chi_{*,j}v_k
+\chi_{*,j}\nabla\chi_u\times A_{k,j}\\
&+\chi_u\lambda\mu(\nabla\chi_*)(\lambda\mu(x-X))\times A_{k,j},
\end{aligned}
\]
and $\|\chi_u\|_{C^i}\le Cd^{-i}\le C(\lambda\mu)^i$. Leibniz' rule
gives the spatial bound in \eqref{eq:velocity-derivatives}, since the balls for different $j$ are
disjoint. For time derivatives note $X'_{k,j}=\omega k$: each time
derivative falling on $\chi_{*,j}$ or on $x-X$ costs at most $C\lambda\mu\omega$,
on $v_k$ (through $u$ and $X$) at most $C\omega\lambda\mu$ relative to the
bounds above, and on $\eta_u$ at most $C\tau^{-1}\le C\lambda_n$. This gives
the second bound.

\proofstep{Step 4: The Sobolev estimate}
The spatial bound in \eqref{eq:velocity-derivatives} gives
$|W_k|\le C\omega$. Combining it with
\eqref{eq:velocity-support}, we obtain
\[
\|W_k\|_{L^2}\le C\omega\mu^{-3/2},\qquad
\|\nabla^2W_k\|_{L^2}\le C\omega(\lambda\mu)^2\mu^{-3/2}.
\] Interpolation
gives
\[
\|w\|_{H^s}\le C\omega\mu^{-3/2}(\lambda\mu)^s
=C\lambda^{4\gamma/b+s+N(s-3/2)}\le C\lambda^{-1}
\le\delta_{n+1}^{16}
\]
for sufficiently large $a>0$, since $N(3/2-s)>s+2$ and
$16\gamma<1$.
\end{proof}

\subsection{The density perturbation}

The principal density increment is
\begin{equation}\label{eq:principal-density}
\theta_p(t,x)=\sum_k a_k(t,x)(\Theta_k(t,x)-1).
\end{equation}
Since $a_k,\Theta_k\ge0$, its negative part is bounded by
$\sum_k a_k$, which is small by Proposition~\ref{prop:coefficients}.
We add density so that the next perturbation preserves positivity,
then restore the mass and first spatial moment.

\subsubsection*{The stationary density correction}

Applying Proposition~\ref{prop:coefficients} at the following
stage shows that the density removed by its principal perturbation
is bounded by
\[
C\bigl(\delta_{n+1}^4|R^+|+\delta_{n+1}^8|T^+|
+\delta_{n+1}^{12}\bigr).
\]
For sufficiently large $a>0$, the defect terms are bounded by
$\delta_{n+1}^2|R^+|+\delta_{n+1}^4|T^+|$, as required by the
density estimate in Proposition~\ref{prop:step}. The new stress contains
$-(\nabla w+\nabla w^T)$, whose supremum can grow like
$\lambda\mu\omega$. Its large cutoff gradients can occur outside
the density bumps, so those bumps do not control the density loss
at the next stage.

The stationary profile of Lemma~\ref{lem:profile} covers the whole
periodic line traversed by each moving bump. Its lower bound on the
tubes and Lemma~\ref{lem:blocks} give
\[
|\nabla w|\le C\lambda\mu\omega\,
\mu^{-2+\gamma/N}\Psi_\mu(\lambda x)
=C\lambda^{1-N+\gamma}\omega\Psi_\mu(\lambda x).
\]
Thus the profile can supply enough density on the cutoff regions
at small $L^p$ cost. It also controls the oscillatory
antidivergences by Lemma~\ref{lem:profile}.

To see how adding density helps, first disregard spatial localization
and work on a time interval where the cutoffs equal one. Consider
\[
\theta_d=\delta_{n+1}^{16}
\left(\Psi_\mu(\lambda x)-\fint\Psi_\mu\right).
\]
This function has zero mass, $\partial_t\theta_d=0$, and
$\|\theta_d\|_{L^p}\le C\delta_{n+1}^{16}$. At fixed $u^+$,
adding $\theta_d$ to the density changes the continuity defect by
$\theta_du^+$ and the stress by $\theta_du^+\otimes u^+$.
Since $|u^+|\le C\omega$, their contribution to the density
removed at the following stage is small:
\[
\begin{aligned}
&\delta_{n+1}^2|\theta_du^+|
+\delta_{n+1}^4|\theta_du^+\otimes u^+|\\
&\quad\le C\delta_{n+1}^{16}
\bigl(\delta_{n+1}^2\omega+\delta_{n+1}^4\omega^2\bigr)
\Psi_\mu(\lambda x)
\le\tfrac14\delta_{n+1}^{16}\Psi_\mu(\lambda x).
\end{aligned}
\]
Here $\delta_{n+1}^2\omega=\lambda^{-2\gamma+4\gamma/b}\to0$.
The positive density added by $\theta_d$ therefore covers the
subtraction caused by its own errors and leaves room for the other
defects. Stationarity avoids the fast time derivative of a moving
bump; only the slow time cutoff will be differentiated.

We now localize this correction. Choose
\begin{equation}\label{eq:nested-spatial-cutoffs}
\chi_m\in C^\infty_c(B_{5+2m}),\qquad
\chi_m=1\quad\text{on }B_{4+2m},\qquad 0\le m\le8N.
\end{equation}
Thus $\chi_{m+1}=1$ within distance $d$ of $\Supp\chi_m$.
Let $\eta_d$ be the outer time cutoff of Lemma~\ref{lem:cutoffs};
it is one near $\Supp\eta_u$. Define the positive constants
\begin{equation}\label{eq:profile-means}
c_m=\frac{\int\chi_m(x)\Psi_\mu(\lambda x)\dd x}
{\int\chi_m(x)\dd x},
\end{equation}
and set
\begin{equation}\label{eq:stationary-density}
\theta_d(t,x)=\delta_{n+1}^{16}\eta_d(t)
\sum_{m=0}^{8N}\lambda^{-m/2}\chi_m(x)
\bigl(\Psi_\mu(\lambda x)-c_m\bigr).
\end{equation}
Each summand has zero spatial integral. The positive contribution is
$\delta_{n+1}^{16}\eta_d E$, where the smooth, nonnegative,
time-independent weight is
\begin{equation}\label{eq:stationary-weight}
E(x)=\sum_{m=0}^{8N}\lambda^{-m/2}\chi_m(x)\Psi_\mu(\lambda x).
\end{equation}
It is supported in $B_{16N+5}$ and satisfies
$E\ge\Psi_\mu(\lambda\cdot)$ on $B_4$.

The successive cutoffs are needed because $\eta_d'$ produces an
inverse divergence. Already for the first summand,
$\Ro[\chi_0(\Psi_\mu(\lambda\cdot)-c_0)]$ can be nonzero outside
$\Supp\chi_0$. Lemmas~\ref{lem:profile} and \ref{lem:far},
together with the mean estimate in Lemma~\ref{lem:density-basic}
below, give
\[
\begin{aligned}
\bigl|\Ro[\chi_0(\Psi_\mu(\lambda\cdot)-c_0)](x)\bigr|
&\le C\lambda^{-1}d^{-1}\chi_1(x)\Psi_\mu(\lambda x)
+C\lambda^{-1}d^{-1}.
\end{aligned}
\]
The next positive density term has coefficient $\lambda^{-1/2}$,
larger than the $\lambda^{-1}$ in this error. Repeating this
comparison accounts for the sum in $\theta_d$. The final coefficient
is $\lambda^{-4N}$, and
$\lambda^{-4N}\|\Psi_\mu\|_\infty\le C\lambda^{-2N}$.
Lemma~\ref{lem:stationary-corrector} includes the time cutoff in
this estimate.

\subsubsection*{The spatial moment correction}

The stationary correction has zero mass. We now correct the mass of
$\theta_p$ and the first moment of $\theta_p+\theta_d$.
Choose a radial $\phi_0\in C^\infty_c(B(x_0,r_0/2))$ with
$\int\phi_0=1$, and put $\phi_i=-\partial_i\phi_0$ for $i=1,2,3$.
These smooth scalar functions satisfy
\begin{equation}\label{eq:moment-basis}
\begin{aligned}
\int\phi_0&=1,&\int(x-x_0)\phi_0&=0,\\
\int\phi_i&=0,&\int(x-x_0)_j\phi_i&=\delta_{ij}.
\end{aligned}
\end{equation}
Set
\begin{equation}\label{eq:moment-density}
\begin{aligned}
\theta_c(t,x)={}&-\phi_0(x)\int\theta_p(t,y)\dd y\\
&-\sum_{i=1}^3\phi_i(x)\int(y-x_0)_i(\theta_p+\theta_d)(t,y)\dd y.
\end{aligned}
\end{equation}
We finally define
\begin{equation}\label{eq:density}
\theta(t,x)=(\theta_p+\theta_c+\theta_d)(t,x),\qquad
\rho^+=\rho+\theta.
\end{equation}
Products of $x-x_0$ with functions compactly supported in $B$ are
extended periodically by zero when applying the averaging estimates.

\begin{lem}[Density and spatial moments]\label{lem:density-basic}
The functions $\theta_p,\theta_c,\theta_d$ and $\theta$ are smooth
and supported in $\Supp\eta_d\times B_{16N+5}$. They satisfy
\begin{equation}\label{eq:density-moments}
\int\theta_d=0,\qquad \int\theta=0,\qquad
\int(x-x_0)\theta=0.
\end{equation}
Moreover,
\begin{equation}\label{eq:stationary-integral-bounds}
c_m\le C,\qquad \|E\|_{L^p}\le C,\qquad
E\ge\Psi_\mu(\lambda\cdot)\quad\text{on }B_4,
\end{equation}
and
\begin{equation}\label{eq:density-lower-bounds}
\theta_p\ge-\sum_k a_k,
\qquad
\theta_d\ge\delta_{n+1}^{16}\eta_d E-C\delta_{n+1}^{16}.
\end{equation}
With $g=\Psi_\mu-\fint\Psi_\mu$, one also has
\begin{equation}\label{eq:mean-correction-bound}
\left|c_m-\fint\Psi_\mu\right|
=\frac{|\int\chi_mg(\lambda\cdot)|}{\int\chi_m}
\le C\lambda^{-1}d^{-1}.
\end{equation}
\begin{equation}\label{eq:stationary-centered}
\begin{aligned}
\theta_d=\delta_{n+1}^{16}\eta_d\sum_m\lambda^{-m/2}
\Bigl[\chi_mg(\lambda x)
-\left(c_m-\fint\Psi_\mu\right)\chi_m\Bigr].
\end{aligned}
\end{equation}
The stationary corrector also obeys
\begin{equation}\label{eq:stationary-absolute-bound}
|\theta_d|\le C\delta_{n+1}^{16}\eta_d E.
\end{equation}
\end{lem}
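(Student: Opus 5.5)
The lemma collects several elementary facts, and I will verify them in the order they are stated.

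\emph{Supports and smoothness.} By Proposition~\ref{prop:coefficients}, $a_k$ is supported in $\Supp\eta\times B_1$, so the same holds for $\theta_p$. Each summand of $\theta_d$ carries the factor $\eta_d\chi_m$ with $m\le8N$, hence lives in $\Supp\eta_d\times B_{16N+5}$. The correction $\theta_c$ is $\phi_0,\phi_i$ times time-dependent integrals of $\theta_p,\theta_d$. So it is supported in $\Supp\eta_d\times B(x_0,r_0/2)\subset\Supp\eta_d\times B_0$. Smoothness is clear because all the integrals depend smoothly on $t$.

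\emph{Moment identities.} Each summand of $\theta_d$ integrates to zero by the definition \eqref{eq:profile-means} of $c_m$, so $\int\theta_d=0$. Using \eqref{eq:moment-basis}, we get
\[
\int\theta_c=-\int\theta_p,\qquad \int(x-x_0)_j\theta_c=-\int(x-x_0)_j(\theta_p+\theta_d).
\]
The mass of $\phi_0$ contributes no first moment, and the $\phi_i$ contribute no mass. Adding the three pieces gives \eqref{eq:density-moments}.

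\emph{Integral bounds and pointwise lower bounds.} For $c_m$ and $\|E\|_{L^p}$, I would use \eqref{eq:product-norm} with $f=\chi_m$ and $g=\Psi_\mu$ (for $q=1$ and $q=p$), together with \eqref{eq:profile-integrability}. This gives
\[
\int\chi_m\Psi_\mu(\lambda\cdot)\le C\bigl(1+\lambda^{-1}d^{-1}\bigr)\le C,
\]
since $d^{-1}\le\lambda_n\le\lambda$. The denominator $\int\chi_m\ge|B_{4}|\ge c(r_0)$, which bounds $c_m$. The triangle inequality over the $8N+1$ terms bounds $\|E\|_{L^p}$. Since $\chi_0=1$ on $B_4$ and all other terms are nonnegative, $E\ge\Psi_\mu(\lambda\cdot)$ there. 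The bound $\theta_p\ge-\sum a_k$ follows from $a_k,\Theta_k\ge0$. For $\theta_d$ we simply drop the constants: $\theta_d\ge\delta_{n+1}^{16}\eta_dE-\delta_{n+1}^{16}\sum_m\lambda^{-m/2}c_m\ge\delta_{n+1}^{16}\eta_dE-C\delta_{n+1}^{16}$.

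\emph{Mean correction and centered form.} Identity \eqref{eq:mean-correction-bound} is immediate from the definition, because
\[
\int\chi_m\Psi_\mu(\lambda\cdot)-\fint\Psi_\mu\int\chi_m=\int\chi_m g(\lambda\cdot).
\]
The inequality then follows from \eqref{eq:periodic-averaging} with $\|\nabla\chi_m\|_\infty\le Cd^{-1}$, $\|g\|_{L^1}\le2\|\Psi_\mu\|_{L^1}\le C$, and the lower bound on $\int\chi_m$. The centered form \eqref{eq:stationary-centered} is the algebraic rewriting $\Psi_\mu(\lambda x)-c_m=g(\lambda x)-(c_m-\fint\Psi_\mu)$.

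\emph{Absolute bound.} Finally, $|\Psi_\mu(\lambda x)-c_m|\le\Psi_\mu(\lambda x)+c_m\le(1+C)\Psi_\mu(\lambda x)$, using $c_m\le C$ and $\Psi_\mu\ge1$. Multiplying by $\lambda^{-m/2}\chi_m$ and summing gives $|\theta_d|\le C\delta_{n+1}^{16}\eta_dE$.

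The only point requiring care is the uniform lower bound on $\int\chi_m$ together with $d^{-1}\le\lambda$. Both come from \eqref{eq:input-bounds} and $r_-\ge r_0\ge r_B/2$, so I expect no real obstacle; the proof is bookkeeping.
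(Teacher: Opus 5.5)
Your proposal is correct and follows essentially the same route as the paper's proof: the supports come from the cutoffs, the moment identities from \eqref{eq:moment-basis}, the mean correction from \eqref{eq:periodic-averaging}, and the absolute bound from $c_m\le C$ together with $\Psi_\mu\ge1$. The only small difference is that you bound $c_m$ and $\|E\|_{L^p}$ with the product estimate \eqref{eq:product-norm}, while the paper simply uses $0\le\chi_m\le1$ and $\|\Psi_\mu(\lambda\cdot)\|_{L^q}=\|\Psi_\mu\|_{L^q}$ for integer $\lambda$, so the extra $\lambda^{-1}d^{-1}$ term, and the appeal to $d^{-1}\le\lambda$ at that point, are not needed.
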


\begin{proof}
The support and smoothness follow from
\eqref{eq:principal-density}--\eqref{eq:density},
Proposition~\ref{prop:coefficients}, and the chosen cutoffs.
Equation \eqref{eq:profile-means} gives $\int\theta_d=0$.
Equations \eqref{eq:moment-density} and \eqref{eq:moment-basis} give the remaining identities in
\eqref{eq:density-moments}.

As
$\chi_m=1$ on $B(x_0,r_0)$, Lemma~\ref{lem:profile} gives
$c_m\le\|\Psi_\mu\|_{L^1}/|B(x_0,r_0)|\le C$; since $\Psi_\mu\ge1$, this
gives $|\theta_d|\le C\delta_{n+1}^{16}\eta_d E$. Also
$\|E\|_{L^p}\le\sum_m\lambda^{-m/2}\|\Psi_\mu\|_{L^p}\le C$
by \eqref{eq:stationary-weight} and Lemma~\ref{lem:profile}.
The pointwise comparison on $B_4$ follows from $\chi_0=1$ there.
Finally, $a_k,\Theta_k\ge0$ by Proposition~\ref{prop:coefficients}
and \eqref{eq:moving-profile-bounds}, so
\eqref{eq:principal-density} gives the first lower bound in
\eqref{eq:density-lower-bounds}. The second follows from
\eqref{eq:stationary-density} and $c_m\le C$.

For the mean estimate, apply Lemma~\ref{lem:products} to
$\chi_mg(\lambda\cdot)$ and use
$\int\chi_m\ge|B(x_0,r_0)|$. This proves
\eqref{eq:mean-correction-bound}. Subtracting $\fint\Psi_\mu$ in
each summand of \eqref{eq:stationary-density} gives
\eqref{eq:stationary-centered}.
\end{proof}

\subsection{The continuity defect}\label{sec:new-defects}

We first compute the error left by the transported profiles. By
Lemma~\ref{lem:blocks} and the definition of $f_k$ in
Proposition~\ref{prop:coefficients},
\begin{equation}\label{eq:transport-identity}
\partial_t[a_k(\Theta_k-1)]
+\D[a_k(\Theta_k-1)V_k]=(\Theta_k-1)f_k.
\end{equation}
The sum of these errors, together with the two density corrections,
determines a single scalar potential $P(t,x)$:
\begin{equation}\label{eq:continuity-potential}
\Delta P=\sum_k(\Theta_k-1)f_k
+\partial_t\theta_c+\partial_t\theta_d,
\qquad \fint P=0.
\end{equation}
The right-hand side has integral $\partial_t\int\theta=0$ by
Lemma~\ref{lem:density-basic}. Thus $P$ is well defined and smooth.
Without spatial localization, subtracting $\nabla P$ from the
transported momentum would complete the continuity equation.

For compact support, put $U=B_{16N+8}$ and choose a cutoff radial
about $x_0$,
\begin{equation}\label{eq:outer-cutoff}
\chi\in C^\infty_c(B_{16N+7}),\qquad
\chi=1\quad\text{on }B_{16N+6}.
\end{equation}
Its gradient is separated from all density increments by $2d$, and
$(32N+16)d<(r_+-r_-)/2$ gives
\[
U\subset B\bigl(x_0,\tfrac12(r_-+r_+)\bigr).
\]
Multiplying $\nabla P$ by $\chi$ introduces the divergence error
$\nabla\chi\cdot\nabla P$. We remove it by the Bogovskii operator,
and use a curl to arrange zero angular momentum. Both corrections
are collected in the smooth vector field
\begin{equation}\label{eq:continuity-correction}
\begin{aligned}
R_c(t,x)={}&-\Bog_U[\nabla\chi\cdot\nabla P](t,x)\\
&+\tfrac12\curl_x\left[\phi_0(x)\sum_k
\int a_k(t,y)(\Theta_k(t,y)-1)(y-x_0)\times V_k\dd y\right].
\end{aligned}
\end{equation}
Radial symmetry of $\chi$ and Lemma~\ref{lem:bog} ensure that the
two localization terms
$\chi\nabla P$ and $\Bog_U[\nabla\chi\cdot\nabla P]$ already
have zero angular momentum.

\begin{lem}[The continuity equation]\label{lem:continuity}
For sufficiently large $a>0$, the vector field
\begin{equation}\label{eq:new-R}
R^+=\left(\rho-\sum_k a_k\right)w
+(\theta_c+\theta_d)u^++\chi\nabla P+R_c
\end{equation}
is smooth and supported in $\Supp\eta_d\times U$. It satisfies
\begin{equation}\label{eq:momentum-increment}
\rho^+u^+-R^+-(\rho u-R)
=\sum_k a_k(\Theta_k-1)V_k-\chi\nabla P-R_c,
\end{equation}
and
\[
\partial_t\rho^++\D(\rho^+u^+-R^+)=0.
\]
The modified momentum increment has zero spatial integral and zero
angular momentum:
\begin{equation}\label{eq:momentum-compatibility}
\begin{aligned}
\int\bigl[\rho^+u^+-R^+-(\rho u-R)\bigr]&=0,\\
\int(x-x_0)\times\bigl[\rho^+u^+-R^+-(\rho u-R)\bigr]&=0.
\end{aligned}
\end{equation}
\end{lem}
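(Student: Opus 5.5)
The plan is to verify the four claims of Lemma~\ref{lem:continuity} in the order smoothness/support, the momentum identity \eqref{eq:momentum-increment}, the continuity equation, and finally the moment conditions \eqref{eq:momentum-compatibility}.

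\textbf{Smoothness and support.} Each term of \eqref{eq:new-R} is smooth. The field $P$ is smooth by \eqref{eq:continuity-potential}, and $\Bog_U$ preserves smoothness by Lemma~\ref{lem:bog}. The term $(\rho-\sum_ka_k)w$ is supported where $w\ne0$, that is, in $\Supp\eta_u\times B_3$. The fields $\theta_c,\theta_d$ are supported in $\Supp\eta_d\times B_{16N+5}$ by Lemma~\ref{lem:density-basic}. The term $\chi\nabla P$ lies in $B_{16N+7}$, and $R_c$ lies in $U$ because $\phi_0$ is supported in $B(x_0,r_0/2)$. For the time support, note that the right side of \eqref{eq:continuity-potential} vanishes for $t\notin\Supp\eta_d$. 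Hence $P=0$ there, and the claim follows.

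\textbf{The momentum identity.} I expand $\rho^+u^+=(\rho+\theta_p+\theta_c+\theta_d)(u+w)$ and subtract $R^+$. By \eqref{eq:velocity-on-support}, each term satisfies $a_k\Theta_k u^+=a_k\Theta_kV_k$. Therefore
\[
\theta_pu^+=\sum_ka_k\Theta_kV_k-\sum_ka_ku^+=\sum_ka_k(\Theta_k-1)V_k+\sum_ka_k(V_k-u)-\sum_ka_kw .
\]
The first identity of \eqref{eq:coefficient-moments} gives $\sum_ka_k(V_k-u)=-R$. This $-R$ cancels the $+R$ in $\rho u-R$. The terms $(\rho-\sum a_k)w$ and $(\theta_c+\theta_d)u^+$ cancel exactly against the corresponding parts of $R^+$, and \eqref{eq:momentum-increment} follows.

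\textbf{The continuity equation.} Subtracting the old equation $\partial_t\rho+\D(\rho u-R)=0$, it remains to show
\[
\partial_t\theta+\D\Bigl[\sum_ka_k(\Theta_k-1)V_k-\chi\nabla P-R_c\Bigr]=0 .
\]
By \eqref{eq:transport-identity}, $\partial_t\theta_p+\D\sum_ka_k(\Theta_k-1)V_k=\sum_k(\Theta_k-1)f_k$. Adding $\partial_t\theta_c+\partial_t\theta_d$ gives $\Delta P$. Next,
\[
\D(\chi\nabla P)=\chi\Delta P+\nabla\chi\cdot\nabla P .
\]
Here $\Delta P$ is supported in $B_{16N+5}$, where $\chi=1$, so $\chi\Delta P=\Delta P$. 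The remaining term $\nabla\chi\cdot\nabla P$ is cancelled by $\D R_c=-\nabla\chi\cdot\nabla P$, since the curl part of $R_c$ is divergence free.

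To apply $\Bog_U$, its argument must have zero integral. Indeed,
\[
\int\nabla\chi\cdot\nabla P=\int\D(\chi\nabla P)-\int\chi\Delta P=0-\int\Delta P=0 .
\]
Here $\int\D(\chi\nabla P)=0$ because $\chi\nabla P$ is compactly supported in $U$, and $\int\Delta P=0$ on the torus.

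\textbf{The moment conditions.} This is the step I expect to be the main obstacle. Call the increment $D$. By the continuity equations, $\D D=-\partial_t\theta$, and $D$ is supported in $U$. Lemma~\ref{lem:moment-identities} together with \eqref{eq:density-moments} then gives
\[
\int D=-\int(x-x_0)\D D=\partial_t\int(x-x_0)\theta=0 .
\]
For the angular momentum I treat each piece of $D$ separately.

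First consider $\chi\nabla P$. Write $\chi=h(|x-x_0|)$ with $h$ radial, and let $H$ be a primitive of $h'(r)/r$ chosen with $H=0$ near $r=0$. Then $\nabla\chi=(x-x_0)\,h'/|x-x_0|=\nabla H$. Integrating by parts,
\[
\int(x-x_0)\times\chi\nabla P=\int(x-x_0)\times\bigl(\nabla(\chi P)-P\nabla\chi\bigr)=-\int P\,(x-x_0)\times\nabla\chi .
\]
The first term vanishes by Lemma~\ref{lem:moment-identities}. The last integrand is zero because $\nabla\chi$ is parallel to $x-x_0$. I will check that the boundary terms vanish using the compact support of $\chi$.

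Next, the Bogovskii term has zero angular momentum by Lemma~\ref{lem:bog}.

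Finally, for the curl term with the constant vector $b(t)=\sum_k\int a_k(\Theta_k-1)(y-x_0)\times V_k$, Lemma~\ref{lem:moment-identities} and $\int\phi_0=1$ give
\[
\int(x-x_0)\times\tfrac12\curl(\phi_0b)=\int\phi_0b=b .
\]
Since this term enters $D$ with a minus sign, it contributes $-b$, which cancels exactly the angular momentum of $\sum_ka_k(\Theta_k-1)V_k$. This proves \eqref{eq:momentum-compatibility}.
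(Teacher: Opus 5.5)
Your proposal is correct and follows the paper's proof essentially step by step: you expand $\rho^+u^+$ using $u^+=V_k$ on $\Supp(a_k\Theta_k)$ and the first identity of \eqref{eq:coefficient-moments}, check the compatibility $\int\nabla\chi\cdot\nabla P=0$ needed for $\Bog_U$, and handle the angular momentum via $\chi\nabla P=\nabla(\chi P)-P\nabla\chi$, the radial symmetry of $\chi$, Lemma~\ref{lem:bog}, and the curl correction. The only slip is the auxiliary primitive $H$: with $H'=h'/r$ one gets $\nabla H=h'(r)(x-x_0)/|x-x_0|^2\neq\nabla\chi$. This is harmless, since you only use that $\nabla\chi$ is parallel to $x-x_0$.
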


\begin{proof}
Since $\chi=1$ near the support of $\Delta P$,
\[
\int\nabla\chi\cdot\nabla P=-\int\chi\Delta P=0.
\]
Thus Lemma~\ref{lem:bog} defines the first term of $R_c$, and
\[
\D(\chi\nabla P+R_c)=\Delta P.
\]
By Lemma~\ref{lem:blocks}, $u^+=V_k$ on $\Supp(a_k\Theta_k)$.
Expanding $\rho^+u^+$ and using Proposition~\ref{prop:coefficients}
therefore gives
\[
\rho^+u^+-\rho u
=\sum_k a_k(\Theta_k-1)V_k-R
+\left(\rho-\sum_k a_k\right)w+(\theta_c+\theta_d)u^+.
\]
This proves \eqref{eq:momentum-increment}. Its divergence, the
transport identity \eqref{eq:transport-identity}, and the defining
equation for $P$ give the new continuity equation.

The first spatial moment of $\theta$ vanishes by
Lemma~\ref{lem:density-basic}. Integrating the continuity equation
against $x-x_0$ therefore gives the first identity in
\eqref{eq:momentum-compatibility}. For the second, write
\[
\chi\nabla P=\nabla(\chi P)-P\nabla\chi.
\]
The gradient has zero angular momentum by
Lemma~\ref{lem:moment-identities}, and
$(x-x_0)\times\nabla\chi=0$ by radial symmetry. The Bogovskii
term also has zero angular momentum by Lemma~\ref{lem:bog}.
Finally, Lemma~\ref{lem:moment-identities} and $\int\phi_0=1$
show that the curl in $R_c$ removes exactly the angular momentum
of the transported sum. All fields have the claimed smoothness and
support by construction.
\end{proof}

\subsection{The momentum defect and pressure}

The same transport identity, multiplied by $V_k$, determines the
oscillatory part of the stress. The scalar term $\pi\Id$ in
Proposition~\ref{prop:coefficients} is canceled by $-\pi$ in the
pressure. The time derivative of $\nabla P$ is a gradient, so we set
\begin{equation}\label{eq:new-pressure}
\Pi^+=\Pi-\pi+\chi\partial_tP.
\end{equation}
Only derivatives of the outer cutoff and the compact correction
$R_c$ remain after these cancellations. We collect their symmetric
inverse divergence in $T_c$.

\begin{lem}[The momentum equation]\label{lem:exact}
For sufficiently large $a>0$, the compactly supported symmetric
correction
\begin{equation}\label{eq:stress-correction}
\begin{aligned}
T_c=\Sym_U\Bigl[&\chi\sum_k\fint(\Theta_k-1)f_kV_k
-\Bigl(\sum_k\Rs[(\Theta_k-1)f_kV_k]\Bigr)\nabla\chi\\
&+(\partial_tP)\nabla\chi-\partial_tR_c\Bigr]
\end{aligned}
\end{equation}
is well defined. The symmetric stress
\begin{equation}\label{eq:new-T}
\begin{aligned}
T^+={}&\chi\sum_k\Rs[(\Theta_k-1)f_kV_k]
-(\nabla w+\nabla w^T)\\
&+\left(\rho-\sum_k a_k\right)
(u\otimes w+w\otimes u+w\otimes w)\\
&+(\theta_c+\theta_d)u^+\otimes u^++T_c
\end{aligned}
\end{equation}
completes $Z^+=(\rho^+,u^+,\Pi^+,R^+,T^+)$ to a smooth solution
of \eqref{eq:defect} from Definition~\ref{def:system}. Both $Z^+-Z$
and $(R^+,T^+)$ are supported in $\Supp\eta_d\times U$.
The momentum flux satisfies
\begin{equation}\label{eq:full-flux-increment}
\begin{aligned}
\rho^+u^+\otimes u^+-\rho u\otimes u
={}&\sum_k a_k(\Theta_k-1)V_k\otimes V_k+\pi\Id-T\\
&+\left(\rho-\sum_k a_k\right)
(u\otimes w+w\otimes u+w\otimes w)\\
&+(\theta_c+\theta_d)u^+\otimes u^+.
\end{aligned}
\end{equation}
\end{lem}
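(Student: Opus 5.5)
Three things need proof: the flux identity \eqref{eq:full-flux-increment}, the well-definedness of $T_c$ (its argument must satisfy the hypotheses of Lemma~\ref{lem:sym}), and the momentum equation for $Z^+$. I take them in that order, and treat the momentum equation as a linear identity for the increments, using the formulation \eqref{eq:linear-defect}. Write $m=\rho u-R$, $\Phi=\rho u\otimes u-T$, and define $m^+,\Phi^+$ in the same way.

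\textbf{The flux identity.} Expand $\rho^+u^+\otimes u^+$ with $\rho^+=\rho+\theta_p+\theta_c+\theta_d$ and $u^+=u+w$. On $\Supp(a_k\Theta_k)$ we have $u^+=V_k$ by \eqref{eq:velocity-on-support}. Hence
\[
\sum_k a_k\Theta_k u^+\otimes u^+=\sum_k a_k\Theta_kV_k\otimes V_k .
\]
The remaining part of $\theta_p$ is $-\sum_k a_k$, and it multiplies $u^+\otimes u^+$. Grouping terms gives
\[
\rho^+u^+\otimes u^+-\rho u\otimes u=\sum_k a_k\Theta_kV_k\otimes V_k-\Bigl(\sum_k a_k\Bigr)u\otimes u+\Bigl(\rho-\sum_k a_k\Bigr)(u\otimes w+w\otimes u+w\otimes w)+(\theta_c+\theta_d)u^+\otimes u^+ .
\]
The second identity in \eqref{eq:coefficient-moments} lets me rewrite $-\bigl(\sum_k a_k\bigr)u\otimes u$ as $-\sum_k a_kV_k\otimes V_k+\pi\Id-T$, which is \eqref{eq:full-flux-increment}. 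Next I subtract $T^+-T$ from \eqref{eq:new-T}. All product terms cancel, and
\[
\Phi^+-\Phi=\sum_k a_k(\Theta_k-1)V_k\otimes V_k+\pi\Id-\chi\sum_k\Rs[(\Theta_k-1)f_kV_k]+\nabla w+\nabla w^T-T_c .
\]

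\textbf{The momentum equation.} I compute $\partial_t(m^+-m)+\D(\Phi^+-\Phi)+\nabla(\Pi^+-\Pi)-\Delta w$, with $m^+-m$ given by \eqref{eq:momentum-increment}.
\begin{itemize}
\item Multiplying \eqref{eq:transport-identity} by the constant $V_k$ gives
\[
\partial_t\bigl[a_k(\Theta_k-1)V_k\bigr]+\D\bigl[a_k(\Theta_k-1)V_k\otimes V_k\bigr]=(\Theta_k-1)f_kV_k .
\]
\item By Lemma~\ref{lem:operators}, $\D\Rs F=F-\fint F$. Therefore
\[
-\D\bigl(\chi\Rs[F]\bigr)=-\chi F+\chi\fint F-\Rs[F]\nabla\chi ,
\]
where $F=\sum_k(\Theta_k-1)f_kV_k$. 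Since $\chi=1$ on $\Supp F$, the term $-\chi F$ cancels the right-hand side of the previous item.
\item The terms $\nabla\pi$ and $-\nabla\pi$ cancel. Lemma~\ref{lem:moment-identities} gives $\D(\nabla w+\nabla w^T)=\Delta w$, which cancels $-\Delta w$.
\item From $-\partial_t(\chi\nabla P)+\nabla(\chi\partial_tP)$ only $(\partial_tP)\nabla\chi$ survives, and $-\partial_tR_c$ also remains.
\end{itemize}
What is left is exactly the argument $g$ of $\Sym_U$ in \eqref{eq:stress-correction}, and it is removed by $-\D T_c=-g$. The continuity equation was already proved in Lemma~\ref{lem:continuity}.

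\textbf{Well-definedness of $T_c$ (the main obstacle).} I must check that $g$ is smooth, compactly supported in $U$, and has zero integral and zero angular momentum. For support: $\nabla\chi$ and $\chi$ live in $B_{16N+7}$, and $R_c$ lies in $U$ by Lemma~\ref{lem:bog}, so $g$ is supported in $U$. The moment conditions need care, because $g$ contains the nonlocal $\Rs$ terms. I would not compute them directly. Instead I use the identity just derived:
\[
g=\partial_t(m^+-m)+\D\bigl(\text{compactly supported symmetric }\widetilde\Phi\bigr)+\nabla q-\Delta w ,
\]
where $\widetilde\Phi=\Phi^+-\Phi+T_c$ and $q=\Pi^+-\Pi$. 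To use this, I will verify that $\widetilde\Phi$, $q$ and $w$ are all supported in $U$:
\begin{itemize}
\item $\chi\Rs[\cdot]$ is supported in $\Supp\chi$;
\item $\nabla w$ is supported in $B_3$;
\item $q=-\pi+\chi\partial_tP$ is supported in $\Supp\chi$.
\end{itemize}
Then Lemma~\ref{lem:moment-identities} kills the linear and angular momenta of the divergence, gradient and Laplacian terms; for $\Delta w=\D(\nabla w+\nabla w^T)$ this works because the matrix is symmetric. The time derivative term has zero moments by \eqref{eq:momentum-compatibility}. Hence $\int g=0$ and $\int(x-x_0)\times g=0$, and Lemma~\ref{lem:sym} applies. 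Finally, all fields are supported in $\Supp\eta_d\times U$ by the construction, which gives the support claim for $Z^+-Z$ and $(R^+,T^+)$.
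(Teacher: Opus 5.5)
Your proposal is correct and follows the paper's proof essentially step for step. You derive the flux identity from $u^+=V_k$ on $\Supp(a_k\Theta_k)$ and the second moment identity. You cancel $\sum_k(\Theta_k-1)f_kV_k$ against $\D(\chi\Rs F)$, leaving only the cutoff and $R_c$ terms. You get the moment conditions for $\Sym_U$ indirectly, by writing the residual as $\partial_t$ of the modified momentum increment plus divergences of compactly supported symmetric matrices and a gradient, exactly as in the paper's Step~2.

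Two small points of presentation. Your $\widetilde\Phi=\Phi^+-\Phi+T_c$ is in fact independent of $T_c$, so there is no circularity. You should also record $\D u^+=0$ from Lemma~\ref{lem:blocks}.
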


\begin{proof}
We verify the cancellation, then the two compatibility conditions
for the compactly supported symmetric inverse divergence.

\proofstep{Step 1: The transported profiles and pressure}
The density decomposition and Lemma~\ref{lem:blocks} give
\[
\rho^+u^+\otimes u^+
=\sum_k a_k\Theta_kV_k\otimes V_k
+\left(\rho-\sum_k a_k+\theta_c+\theta_d\right)u^+\otimes u^+.
\]
Subtracting $\rho u\otimes u$ and using
Proposition~\ref{prop:coefficients} gives
\eqref{eq:full-flux-increment}. Moreover,
\[
\partial_t\sum_k a_k(\Theta_k-1)V_k
+\D\sum_k a_k(\Theta_k-1)V_k\otimes V_k
=\sum_k(\Theta_k-1)f_kV_k.
\]
Insert this identity, the modified momentum increment of
Lemma~\ref{lem:continuity}, and \eqref{eq:new-pressure} into the
momentum equation. Since
$\D(\nabla w+\nabla w^T)=\Delta w$ by
Lemma~\ref{lem:moment-identities}, the terms left to correct are
\[
\begin{aligned}
&\sum_k(\Theta_k-1)f_kV_k
-\D\left(\chi\sum_k\Rs[(\Theta_k-1)f_kV_k]\right)\\
&\qquad-\partial_t(\chi\nabla P+R_c)
+\nabla(\chi\partial_tP).
\end{aligned}
\]
By Lemma~\ref{lem:operators} and the product rule, this is exactly
the vector field in brackets in \eqref{eq:stress-correction}.

\proofstep{Step 2: The compact correction}
The residual just computed is compactly supported in $U$. Before
cancellation it is the time derivative of the modified momentum
increment, plus divergences of compactly supported symmetric matrices
and a compactly supported gradient. Lemmas~\ref{lem:continuity}
and \ref{lem:moment-identities} therefore give zero spatial
integral and zero angular momentum. Lemma~\ref{lem:sym} applies,
so the divergence of $T_c$ is this residual. Adding $T_c$ to the
stress proves the momentum equation. The continuity equation is
Lemma~\ref{lem:continuity}, and $\D u^+=0$ follows from
Lemma~\ref{lem:blocks}. Smoothness and support follow from the
same lemmas and the chosen cutoffs.
\end{proof}

The terms $\theta_du^+$ and $\theta_du^+\otimes u^+$ are the
additional errors created by the density correction. Their effect
on the following density perturbation is smaller than the positive
part of $\theta_d$, as checked above. The potential $\nabla P$
also contains $\Ro\partial_t\theta_d$; its estimate requires the
successive spatial cutoffs.

\section{One step of the iteration: the estimates}\label{sec:estimates}

We prove a lower bound for the total density perturbation $\theta$
that preserves positivity when the construction is repeated. The
new defects determine the possible density subtraction at the next
stage; we compare them with the added density
$\delta_{n+1}^{16}\eta_d E$. We
then prove the integral and derivative estimates needed for the
iteration.

We keep the hypotheses of Proposition~\ref{prop:step} and the
fields of Lemmas~\ref{lem:continuity} and \ref{lem:exact}. The
formulas in these lemmas express the defects in terms of $w$,
the scalar potential $P$, and the compact corrections $R_c,T_c$.
We estimate the corrections first, then the terms entering the
lower bound for $\theta$. Recall
$\delta_{n+1}=\lambda^{-\gamma}$, $\omega=\lambda^{4\gamma/b}$,
$\mu=\lambda^N$, and $\lambda_n=\lambda^{1/b}$.
Since $1000(N^4+20)<\gamma b$, the input bounds in
Proposition~\ref{prop:step} give, for sufficiently large $a>0$,
\begin{equation}\label{eq:old-scale-bounds}
\|(\rho,u,R,T)\|_{C^{20}_{t,x}}+\tau^{-20}+d^{-20}
\le\lambda^{\gamma/1000}.
\end{equation}
Together with Lemma~\ref{lem:cutoffs} and the defining spatial
cutoff bounds, this also controls derivatives of the cutoffs
up to order $20$ by $C\lambda^{\gamma/1000}$. We use this
separation between the old and new scales throughout.

\subsection{Localization and moment errors}

Since $24\gamma<1/4$, errors of size $C\lambda^{-1/2}$ are at most
$\delta_{n+1}^{24}/100$ for sufficiently large $a>0$.
The moment correction $\theta_c$ contains averages against mean-zero
profiles. The corrections $R_c,T_c$ contain such averages or periodic
potentials evaluated on $\Supp\nabla\chi$. This support is separated
from all density increments by $2d$. Spatial averaging and the
far-field estimate therefore make these corrections uniformly small.

\begin{lem}[Localization and spatial moment corrections]\label{lem:small}
For sufficiently large $a>0$, the spatial moment correction satisfies
\begin{equation}\label{eq:small-moments}
\|\theta_c\|_{C^6_{t,x}}\le\lambda^{-3/4}.
\end{equation}
The compact corrections of Lemmas~\ref{lem:continuity} and
\ref{lem:exact} obey
\begin{equation}\label{eq:small-corrections}
\|R_c\|_{C^2_{t,x}}+\|T_c\|_{C^0}\le\lambda^{-1/2}.
\end{equation}
Outside $B_4$, the oscillatory antidivergences satisfy
\begin{equation}\label{eq:small-tail}
\left|\chi\sum_k\Ro[(\Theta_k-1)f_k]\right|
+\left|\chi\sum_k\Rs[(\Theta_k-1)f_kV_k]\right|
\le\lambda^{-3/4}.
\end{equation}
\end{lem}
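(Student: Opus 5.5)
The plan is to treat the three estimates in order, since each builds on the previous: first $\theta_c$, then $R_c$, then the tail bound \eqref{eq:small-tail}, and finally $T_c$, whose bracket contains all the earlier terms. Two mechanisms do all the work. One is the spatial averaging \eqref{eq:periodic-averaging} of Lemma~\ref{lem:products}, which gains $\lambda^{-1}$ from integrals of slow coefficients against mean-zero fast profiles. The other is the far-field bound of Lemma~\ref{lem:far}, which gains $\lambda^{-4}d^{-(m+6)}$ wherever we evaluate at distance $\ge d$ from the sources. Every slow factor is bounded via \eqref{eq:old-scale-bounds}, \eqref{eq:coefficient-derivatives} and \eqref{eq:transport-coefficient-bounds}, so it costs at most $\lambda^{C\gamma}$. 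Each fast time derivative costs $\lambda\omega$ by \eqref{eq:time-derivatives}.

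\textbf{Step 1: the moment correction.} By \eqref{eq:moment-density}, $\theta_c$ is a fixed smooth combination of $\phi_0,\phi_i$ with time-dependent coefficients.
\begin{itemize}
\item The coefficients from $\theta_p$ are $\int a_k(\Theta_k-1)$ and $\int(y-x_0)_ia_k(\Theta_k-1)$. Here $\Theta_k(t,\cdot)$ is $\zeta_\mu-1$ translated and rescaled by the integer $\lambda$.
\item By \eqref{eq:periodic-averaging} with $\|\zeta_\mu-1\|_{L^1}\le2$, each such integral is at most $C\lambda^{-1}\|a_k\|_{C^1}$.
\item Up to six time derivatives, via \eqref{eq:time-derivatives}, cost $(\lambda\omega)^6$. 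That is far too much, so instead I integrate by parts in $y$. Since $\partial_t\Theta_k=-V_k\cdot\nabla\Theta_k$, each time derivative moves onto $a_k$ (or onto $(y-x_0)$) at cost $\omega$. The result is a bound $C\lambda^{-1}\omega^6\lambda^{C\gamma}\le\lambda^{-3/4}$.
\item The $\theta_d$ part has a stationary profile. Its time derivatives hit only $\eta_d$. By \eqref{eq:stationary-centered}, its first moment is a sum of $\int(y-x_0)_i\chi_mg(\lambda y)$, bounded via \eqref{eq:periodic-averaging} by $C\lambda^{-1}d^{-1}$, plus $(c_m-\fint\Psi_\mu)\int(y-x_0)_i\chi_m$, bounded by \eqref{eq:mean-correction-bound}.
\end{itemize}

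\textbf{Step 2: $R_c$ and the tail.} The curl term in \eqref{eq:continuity-correction} is handled exactly as in Step 1, with $C^2$ norms losing at most $\omega^3$.

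For the Bogovskii term, Lemma~\ref{lem:bog} reduces matters to $\|\nabla\chi\cdot\nabla P\|_{C^2}$. On $\Supp\nabla\chi$ we have $\nabla P=\Ro\Delta P$ modulo a constant (none, since this is a gradient of a mean-zero potential), with $\Delta P$ given by \eqref{eq:continuity-potential}. The three source terms are treated as follows.
\begin{itemize}
\item The terms $\Ro[(\Theta_k-1)f_k]$ are evaluated at distance $\ge2d$ from $\Supp f_k\subset B_1$. Lemma~\ref{lem:far} gives $C\lambda^{-4}d^{-8}\|f_k\|_{C^4}$. Time derivatives again go onto $f_k$ after writing $\partial_t\Theta_k=-V_k\cdot\nabla\Theta_k$ and commuting $\nabla$ through $\Ro$; alternatively, \eqref{eq:time-derivatives} with $j\le3$ and the $\Delta^{-2}$ bound cost only $(\lambda\omega)^3$ against $\lambda^{-4}$.
\item The term $\Ro\partial_t\theta_c$ is bounded by Step 1.
\item The term $\Ro\partial_t\theta_d=\delta_{n+1}^{16}\eta_d'\sum_m\lambda^{-m/2}\Ro[\chi_mg(\lambda\cdot)-(\dots)\chi_m]$ is again far field from $\Supp\chi_m\subset B_{16N+5}$, which lies at distance $d$ from $\Supp\nabla\chi$. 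Lemma~\ref{lem:far} and \eqref{eq:mean-correction-bound} apply.
\end{itemize}
The same far-field bound with $m=0$ gives \eqref{eq:small-tail} outside $B_4$, which is at distance $\ge d$ from $B_1\supset\Supp f_k$.

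\textbf{Step 3: $T_c$, the main obstacle.} Lemma~\ref{lem:sym} loses one derivative, so I need $C^1$ bounds on the bracket in \eqref{eq:stress-correction}.
\begin{itemize}
\item The terms $\Rs[\dots]\nabla\chi$ and $(\partial_tP)\nabla\chi$ live on $\Supp\nabla\chi$, where the far-field estimate again applies. For $\partial_tP$ this requires one extra time derivative, which stays within the $j\le3$ allowance.
\item The term $\partial_tR_c$ is in $C^1$ by the $C^2$ bound of Step 2.
\item The delicate term is $\chi\fint(\Theta_k-1)f_kV_k$. It is only small by averaging, giving $C\lambda^{-1}\omega\|f_k\|_{C^1}$, and its $C^1$ norm in space is harmless because $\chi$ is slow. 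Its fast time dependence is handled by the same transfer of $\partial_t$ onto $f_k$.
\end{itemize}
The real danger throughout is any place where a fast time derivative of a moving profile cannot be integrated by parts. I expect the bookkeeping for $\partial_t^2$ of $P$ on $\Supp\nabla\chi$ to be the tightest point. I will use the $(\lambda\omega)^j\lambda^{-4}$ bound with $j\le3$, which still leaves $\lambda^{-1+C\gamma}\le\lambda^{-1/2}$.
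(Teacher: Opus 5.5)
Your proposal is correct and follows essentially the same route as the paper. You move time derivatives of the averages onto the coefficients through the transport identity before applying \eqref{eq:periodic-averaging}. You control the potentials on $\Supp\nabla\chi$ and outside $B_4$ with Lemma~\ref{lem:far}; the paper simply pays $(\lambda\omega)^2$ from \eqref{eq:time-derivatives} against $\lambda^{-4}$, which is your second alternative. Finally you transfer these bounds to $R_c$ and $T_c$ through Lemmas~\ref{lem:bog} and \ref{lem:sym}, as the paper does.
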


\begin{proof}
We first estimate the spatial averages and the potentials away from
their sources, then apply the compact inverse divergence operators.

\proofstep{Step 1: The spatial moment correction}
By Lemma~\ref{lem:blocks}, time derivatives of an average
against $\Theta_k-1$ can be moved onto the coefficient. For every
smooth scalar $h$ on $[0,1]\times\TT^3$, integration by parts gives
\begin{equation}\label{eq:averaging-time-derivatives}
\partial_t^j\int h(\Theta_k-1)
=\int[(\partial_t+V_k\cdot\nabla)^jh](\Theta_k-1).
\end{equation}
Apply Lemma~\ref{lem:products} to
$g=\zeta_\mu(\cdot-y_k)-1$, which has $L^1$ norm at most two.
For $h=a_k$ or $h=(x-x_0)_i a_k$ and $0\le j\le6$, combine this with
\eqref{eq:averaging-time-derivatives} and
Proposition~\ref{prop:coefficients}:
\[
\begin{aligned}
\left|\partial_t^j\int h(\Theta_k-1)\right|
&\le C\lambda^{-1}\omega^j\|a_k\|_{C^{j+1}_{t,x}}\\
&\le C\lambda^{-1}\omega^6\delta_{n+1}^{-1}.
\end{aligned}
\]

For the stationary corrector, put $g=\Psi_\mu-\fint\Psi_\mu$.
This profile has zero mean and bounded $L^1$ norm by
Lemma~\ref{lem:profile}. We use the mean estimate and the centered
representation of $\theta_d$ in Lemma~\ref{lem:density-basic}.
Apply Lemma~\ref{lem:products} to $(x-x_0)_i\chi_mg(\lambda\cdot)$
and Lemma~\ref{lem:cutoffs} to $\eta_d$. Since only $\eta_d$
depends on time, for $0\le j\le6$ this gives
\[
\left|\partial_t^j\int(x-x_0)_i\theta_d\right|
\le C\delta_{n+1}^{16}\tau^{-j}\lambda^{-1}d^{-1}.
\]
It follows that the mass and three first-moment coefficients of
$\theta_c$ in Lemma~\ref{lem:density-basic}, together with their
first six time derivatives, are bounded by
\[
C\lambda^{-1}
\left(\omega^6\delta_{n+1}^{-1}+\tau^{-6}d^{-1}\right)
\le C\lambda^{-1+2\gamma},
\]
where we used \eqref{eq:old-scale-bounds} and
$\omega=\lambda^{4\gamma/b}$. The functions $\phi_i$ are fixed,
so
\begin{equation}\label{eq:moment-averaging-bound}
\|\theta_c\|_{C^6_{t,x}}\le C\lambda^{-1+2\gamma}.
\end{equation} For sufficiently large $a>0$, this proves \eqref{eq:small-moments}.

\proofstep{Step 2: The potentials away from their sources}
Fix $0\le i,j\le2$ and
$\mathcal L\in\{\Ro,\Delta^{-1}\Pne\}$. Write
\[
\Theta_k-1=g_k(\lambda x-\lambda\omega t k),
\qquad g_k=\zeta_\mu(\cdot-y_k)-1.
\]
The profile $g_k$ has zero mean and $L^1$ norm at most two. Expand
the time derivatives and apply Lemma~\ref{lem:far} with two inverse
Laplacians to each term. Since
$\Supp f_k\subset\Supp\eta\times B_1$, at distance at least $d$
from $B_1$ we obtain
\begin{equation}\label{eq:oscillatory-farfield}
\begin{aligned}
|\partial_t^j\nabla^i\mathcal L[(\Theta_k-1)f_k]|
&\le Cd^{-8}\lambda^{-4}(\lambda\omega)^2
\|f_k\|_{C^6_{t,x}}\\
&\le Cd^{-8}\lambda^{-2}\omega^3
\|a_k\|_{C^7_{t,x}}\\
&\le C\lambda^{-2+2\gamma}.
\end{aligned}
\end{equation}
The last bound uses Proposition~\ref{prop:coefficients} and
\eqref{eq:old-scale-bounds}. The same argument applies to
$\Rs[(\Theta_k-1)f_kV_k]$, with one additional factor $\omega$.

For the other two terms defining $P$ in Lemma~\ref{lem:continuity},
we use the density corrections. Lemma~\ref{lem:operators} and
\eqref{eq:moment-averaging-bound} give
\[
|\partial_t^j\nabla^i\mathcal L\partial_t\theta_c|
\le C\lambda^{-1+2\gamma}.
\]
For $\theta_d$, put $g=\Psi_\mu-\fint\Psi_\mu$ and use the
centered representation in Lemma~\ref{lem:density-basic}. When
$\dist(x,B_{16N+5})\ge d$, Lemma~\ref{lem:far} gives
\[
|\nabla^i\mathcal L[\chi_mg(\lambda\cdot)](x)|
\le Cd^{-12}\lambda^{-4}.
\]
The mean correction is bounded by $C\lambda^{-1}d^{-3}$ by
Lemmas~\ref{lem:density-basic} and \ref{lem:operators}.
Time derivatives fall only on $\eta_d$; Lemma~\ref{lem:cutoffs}
and \eqref{eq:old-scale-bounds} bound the total contribution by
$C\lambda^{-1+2\gamma}$. Consequently, at distance at least $d$
from $B_{16N+5}$,
\begin{equation}\label{eq:small-farfield}
\begin{aligned}
&|\partial_t^j\nabla^i P|+|\partial_t^j\nabla^i\nabla P|\\
&\qquad+\left|\partial_t^j\nabla^i
\sum_k\Rs[(\Theta_k-1)f_kV_k]\right|\le\lambda^{-3/4}
\qquad(0\le i,j\le2).
\end{aligned}
\end{equation}

\proofstep{Step 3: Averages of the transported profiles}
By Lemma~\ref{lem:products},
\eqref{eq:averaging-time-derivatives}, and
Proposition~\ref{prop:coefficients},
\[
\begin{aligned}
\left|\partial_t^j\sum_k\fint(\Theta_k-1)f_kV_k\right|
&\le C\lambda^{-1}\omega^{j+1}\sum_k\|f_k\|_{C^{j+1}}\\
&\le C\lambda^{-1}\omega^3\delta_{n+1}^{-1}
\qquad(j=0,1).
\end{aligned}
\]
Applying the same averaging identity to
$(x-x_0)\times a_kV_k$ gives
\begin{equation}\label{eq:small-averages}
\left|\partial_t^j\int(x-x_0)\times
\sum_k a_k(\Theta_k-1)V_k\dd x\right|
\le C\lambda^{-1}\omega^3\delta_{n+1}^{-1}
\le\lambda^{-3/4},\qquad 0\le j\le2.
\end{equation}
Thus the coefficients of the curl in $R_c$, together with their
first two time derivatives, are uniformly small.

\proofstep{Step 4: The compact corrections}
The formula for $R_c$ in Lemma~\ref{lem:continuity} contains only
the Bogovskii correction on $\Supp\nabla\chi$ and the curl just
estimated. Lemma~\ref{lem:bog},
\eqref{eq:small-farfield}, and \eqref{eq:small-averages} give
\[
\|R_c\|_{C^2_{t,x}}\le Cd^{-3}\lambda^{-3/4}.
\]
The formula for $T_c$ in Lemma~\ref{lem:exact} contains the
averages from Step 3, the potentials on $\Supp\nabla\chi$,
and $\partial_tR_c$. Lemma~\ref{lem:sym} therefore gives
\[
\|R_c\|_{C^2_{t,x}}+\|T_c\|_{C^0}
\le Cd^{-3}\lambda^{-3/4}\le\lambda^{-1/2},
\]
where $d^{-3}\le\lambda^{3/b}$ by Proposition~\ref{prop:step}.
This proves \eqref{eq:small-corrections}.
Outside $B_4$, the distance to $B_1\supset\Supp f_k$ is at least
$6d$. Estimate \eqref{eq:oscillatory-farfield} and its symmetric
antidivergence counterpart prove \eqref{eq:small-tail}.
\end{proof}

\subsection{The density lower bound}

We prove the lower bound for the full density perturbation required
in Proposition~\ref{prop:step}.

\begin{lem}[Lower bound for the density perturbation]\label{lem:pointwise}
For sufficiently large $a>0$, the total density perturbation
$\theta=\theta_p+\theta_c+\theta_d$ satisfies
\begin{equation}\label{eq:total-density-lower-bound}
\theta\ge-\sum_k a_k
+\delta_{n+1}^2|R^+|+\delta_{n+1}^4|T^+|
-C\delta_{n+1}^{16}.
\end{equation}
In particular, $\theta$ satisfies \eqref{eq:step-positivity} from
Proposition~\ref{prop:step}.
\end{lem}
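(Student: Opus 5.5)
The plan is to use the lower bounds \eqref{eq:density-lower-bounds} and \eqref{eq:small-moments}, which give
\[
\theta\ge-\sum_k a_k+\delta_{n+1}^{16}\eta_dE-C\delta_{n+1}^{16}-\lambda^{-3/4}.
\]
It then suffices to prove the pointwise bound
\[
\delta_{n+1}^2|R^+|+\delta_{n+1}^4|T^+|\le\delta_{n+1}^{16}\eta_dE+C\delta_{n+1}^{16}.
\]
I would estimate each term of \eqref{eq:new-R} and \eqref{eq:new-T} pointwise by a small multiple of $\delta_{n+1}^{16}\eta_dE$ plus $C\delta_{n+1}^{16}$. The consequence \eqref{eq:step-positivity} then follows from \eqref{eq:coefficient-bounds}, with $\sum_ka_k\le 24\omega^{-1}(2\delta_n^8+|R|+|T|/\omega)\le\delta_n^2|R|+\delta_n^4|T|+\delta_n^2$ since $\omega=\delta_n^{-4}$. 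The term $C\delta_{n+1}^{16}$ is absorbed into $\delta_n^2$.

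\textbf{Velocity terms.} The terms containing $w$ are supported in $\eta_u\ne0$, where $\eta_d=1$, and in the tubes $\mathcal C_k\cap B_3$, where $E\ge\Psi_\mu(\lambda\cdot)\ge c\mu^{2-\gamma/N}$ by \eqref{eq:profile-tube}. Lemma~\ref{lem:blocks} gives $|\nabla w|\le C\omega\lambda\mu$, so
\[
\delta_{n+1}^4|\nabla w|\le C\lambda^{-4\gamma}\omega\lambda^{1-N+\gamma}\Psi_\mu(\lambda x)\ll\delta_{n+1}^{16}\Psi_\mu(\lambda x),
\]
because $N\ge\gamma^{-1}$ is large. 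The terms $(\rho-\sum a_k)w$ and $(\rho-\sum a_k)(u\otimes w+w\otimes u+w\otimes w)$ are bounded by $\lambda^{\gamma/1000}\omega^2$ on the tubes. This is far below $\mu^{2-\gamma/N}\delta_{n+1}^{16}$.

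\textbf{Density-correction terms.} For $\theta_du^+$ and $\theta_du^+\otimes u^+$ I would use \eqref{eq:stationary-absolute-bound} and $|u^+|\le C\omega$, exactly as in the heuristic computation before \eqref{eq:nested-spatial-cutoffs}. This gives at most $\frac14\delta_{n+1}^{16}\eta_dE$. The terms with $\theta_c$ are $O(\lambda^{-3/4}\omega^2)$, and $R_c,T_c$ are $O(\lambda^{-1/2})$ by Lemma~\ref{lem:small}.

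\textbf{Oscillatory antidivergence terms.} Outside $B_4$, the term $\chi\sum_k\Rs[(\Theta_k-1)f_kV_k]$ is $\le\lambda^{-3/4}$ by \eqref{eq:small-tail}. Inside $B_4$ it is bounded by \eqref{eq:profile-antidivergence}, applied after translating in time; here $z=\lambda\omega tk+y_k\in L_k$. This gives
\[
C\lambda^{-1}\|f_k\|_{C^1}\omega\,\mu^{\gamma/N}\Psi_\mu\le C\lambda^{-1+3\gamma}\Psi_\mu,
\]
which is tiny relative to $\delta_{n+1}^{12}$, and $E\ge\Psi_\mu$ on $B_4$. The time support lies in $\Supp\eta$, where $\eta_d=1$.

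\textbf{Main obstacle: the term $\chi\nabla P$.} Write $\nabla P=\sum_k\Ro[(\Theta_k-1)f_k]+\Ro\partial_t\theta_c+\Ro\partial_t\theta_d$. The first two pieces are handled as above. The difficulty is $\Ro\partial_t\theta_d=\delta_{n+1}^{16}\eta_d'\sum_m\lambda^{-m/2}\Ro[\chi_m(\Psi_\mu(\lambda\cdot)-c_m)]$. The factor $\eta_d'$ is not covered by $\eta_d$, so I would use $|\eta_d'|\le C\tau^{-1}\eta_d^{1-1/(16N)}$. For each $m$ I would split into the region within $d$ of $\Supp\chi_m$, where $\chi_{m+1}=1$, and the complement. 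On the near region, \eqref{eq:profile-self-antidivergence} and \eqref{eq:mean-correction-bound} give the bound $C\lambda^{-1}d^{-1}(\Psi_\mu+1)$. This is dominated by $\lambda^{-1/2}$ times the $(m+1)$-st summand of $E$, with a $\lambda^{-1/2}$ gain to spare. On the far region Lemma~\ref{lem:far} gives $O(\lambda^{-4}d^{-6})$. For the last index $m=8N$ I would instead use $\|\Psi_\mu\|_\infty\le C\mu^2$, which gives $\lambda^{-4N}\mu^2\lambda^{-1}\le\lambda^{-2N}$.

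The remaining issue is the mismatch between $\eta_d^{1-1/(16N)}$ and $\eta_d$. I would handle it by Young's inequality: if $\eta_d\le\lambda^{-8N}$, then $\eta_d^{1-1/(16N)}\le\lambda^{1/2}\eta_d$, and this power loss is absorbed by the spare $\lambda^{-1/2}$ gains. Otherwise the excess is bounded by a constant times $\delta_{n+1}^{16}$. This bookkeeping is the step most likely to need adjustment.
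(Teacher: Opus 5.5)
Your proposal follows the paper's proof. You reprove inline the content of Lemma~\ref{lem:principal}: the velocity, viscous, oscillatory and compact-correction terms are compared with $\Psi_\mu(\lambda x)\le E$ on $B_4$, where $\eta_d=1$. You also reprove Lemma~\ref{lem:stationary-corrector}, the nested-cutoff treatment of $\chi\Ro\partial_t\theta_d$. You then combine these with \eqref{eq:density-lower-bounds} and \eqref{eq:small-moments} exactly as the paper does.

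There is one slip, in the step you flagged: the case split is reversed.

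\textbf{Case $\eta_d\ge\lambda^{-8N}$.} Here $\eta_d^{1-1/(16N)}=\eta_d\,\eta_d^{-1/(16N)}\le\lambda^{1/2}\eta_d$, not when $\eta_d\le\lambda^{-8N}$. This loss cancels the $\lambda^{-1/2}$ gain exactly, so nothing is to spare. The remaining term $C\delta_{n+1}^{18}\tau^{-1}d^{-1}\eta_dE$ is a small multiple of $\delta_{n+1}^{16}\eta_dE$ only because $C\delta_{n+1}^2\tau^{-1}d^{-1}\le C\lambda^{-2\gamma+\gamma/10000}$, by \eqref{eq:old-scale-bounds}.

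\textbf{Case $\eta_d<\lambda^{-8N}$.} Use $\eta_d^{1-1/(16N)}\le\lambda^{-8N+1/2}$ together with $E\le C\mu^2=C\lambda^{2N}$. This gives an error of order $\lambda^{-6N}$, which is absorbed into $C\delta_{n+1}^{16}$.

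The paper handles both cases at once with the single Young inequality $\eta_d^{1-1/(16N)}\lambda^{-1/4}\le\eta_d/8+C_N\lambda^{-4N}$. It spends only $\lambda^{1/4}$ of the gain, keeping a $\lambda^{-1/4}$ margin.
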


We first compare the defects from $w$ with the positive density
$\delta_{n+1}^{16}\eta_d E$, then estimate the potential
$\chi\Ro\partial_t\theta_d$. The two products $\theta_du^+$ and
$\theta_du^+\otimes u^+$ are handled in the proof of the lemma.

\begin{lem}[Principal defect terms]\label{lem:principal}
For sufficiently large $a>0$,
\begin{equation}\label{eq:remaining-density-bound}
\left|\rho-\sum_k a_k\right|\le\delta_{n+1}^{-1}.
\end{equation}
The new defects of Lemmas~\ref{lem:continuity} and \ref{lem:exact}
satisfy
\begin{equation}\label{eq:principal-profile-bound}
\begin{aligned}
&\delta_{n+1}^2
\bigl|R^+-\theta_du^+-\chi\Ro\partial_t\theta_d\bigr|
+\delta_{n+1}^4
\bigl|T^+-\theta_du^+\otimes u^+\bigr|\\
&\qquad\le\tfrac14\delta_{n+1}^{16}\eta_d E+\lambda^{-1/2}.
\end{aligned}
\end{equation}
\end{lem}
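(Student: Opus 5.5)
The plan is to expand $R^+$ and $T^+$ from \eqref{eq:new-R} and \eqref{eq:new-T}, subtract the indicated terms, and bound the remaining pieces one by one. Each piece is either pointwise dominated by a small multiple of $\delta_{n+1}^{16}\eta_d E$ on the tubes $\mathcal C_k$ and on $B_4$, or is smaller than $\lambda^{-1/2}$ everywhere.

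First, \eqref{eq:remaining-density-bound} is immediate. We have $|\rho|\le\lambda^{\gamma/1000}$ by \eqref{eq:old-scale-bounds}, and $\sum_ka_k\le\delta_{n+1}^{-1}\omega^{-1}|\Lambda|$ by \eqref{eq:coefficient-derivatives}. Hence $|\rho-\sum_k a_k|\le\delta_{n+1}^{-1}=\lambda^{\gamma}$ for $a$ large.

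Next, write $R^+-\theta_du^+-\chi\Ro\partial_t\theta_d$ as the sum of three groups.
\begin{itemize}
\item The term $(\rho-\sum_ka_k)w$.
\item The terms $\theta_cu^++R_c$ together with $\chi\nabla P-\chi\Ro\partial_t\theta_d=\chi\sum_k\Ro[(\Theta_k-1)f_k]+\chi\Ro\partial_t\theta_c$, using $\nabla P=\Ro\Delta P$.
\item The part of $T^+$ other than $\theta_du^+\otimes u^+$, namely $\chi\sum_k\Rs[(\Theta_k-1)f_kV_k]$, $-(\nabla w+\nabla w^T)$, the quadratic terms $(\rho-\sum a_k)(u\otimes w+w\otimes u+w\otimes w)$, $\theta_cu^+\otimes u^+$ and $T_c$.
\end{itemize}

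The terms built from $\theta_c,R_c,T_c$ are bounded by Lemma~\ref{lem:small}. We use $|u^+|\le C\omega$ and $\|\Ro\partial_t\theta_c\|_\infty\le C\lambda^{-3/4}$. Since $\omega^2\lambda^{-3/4}\ll\lambda^{-1/2}$, their total, after the weights $\delta_{n+1}^2,\delta_{n+1}^4$, lies below $\lambda^{-1/2}/2$.

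For the $w$-terms, Lemma~\ref{lem:blocks} gives pointwise bounds $|w|\le C\omega\mathbf 1_{\cup\mathcal C_k}$ and $|\nabla w|\le C\omega\lambda\mu\mathbf 1_{\cup\mathcal C_k}$. The supports lie in $B_3\subset B_4$ and in the time support of $\eta_u$, where $\eta_d=1$. On $\mathcal C_k\cap B_4$, \eqref{eq:profile-tube} and $E\ge\Psi_\mu(\lambda\cdot)$ give $E\ge c\mu^{2-\gamma/N}$. The worst term is therefore bounded by
\[
\delta_{n+1}^4|\nabla w|\le C\lambda^{-4\gamma}\omega\lambda^{1+N}\mu^{-2+\gamma/N}E=C\lambda^{1-N+\gamma-4\gamma+4\gamma/b}E,
\]
which is at most $\frac1{100}\delta_{n+1}^{16}E$ because $N\ge\gamma^{-1}$ is large. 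The quadratic terms are controlled the same way, using $\delta_{n+1}^{-1}\omega^2\ll\lambda\mu$.

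The main obstacle is the oscillatory antidivergences $\chi\Ro[(\Theta_k-1)f_k]$ and $\chi\Rs[(\Theta_k-1)f_kV_k]$. Outside $B_4$ they are at most $\lambda^{-3/4}$ by \eqref{eq:small-tail}. Inside $B_4$ we apply \eqref{eq:profile-antidivergence} at each fixed $t$. The moving profile is $\zeta_\mu(\cdot-z)$ with $z=y_k+\lambda\omega tk\in L_k$, and by \eqref{eq:operator-scaling} and translation invariance the estimate holds uniformly in $t$. This gives a bound
\[
C\lambda^{-1}\|f_k\|_{C^1}\omega\mu^{\gamma/N}\Psi_\mu(\lambda x)\le C\lambda^{-1+2\gamma}\Psi_\mu(\lambda x).
\]
On $B_4$ this is at most $C\lambda^{-1+2\gamma}E$. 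We also need $\eta_d=1$ there, which holds because $f_k$ is supported where $\eta\ne0$ and $\eta_d=1$ near that set. After the weight $\delta_{n+1}^2$, the term becomes $\ll\delta_{n+1}^{16}E$, since $-1+2\gamma<-16\gamma$.

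Summing all contributions with small constants gives the bound $\frac14\delta_{n+1}^{16}\eta_dE+\lambda^{-1/2}$, for $a$ large.
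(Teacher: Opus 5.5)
Your proposal is correct and follows essentially the paper's proof. It uses the same decomposition of $R^+,T^+$ via $\nabla P=\Ro\Delta P$, and compares the $w$-terms and oscillatory antidivergences with $\Psi_\mu(\lambda x)\le E$ on $B_4$; your tube lower bound for $E$ is the paper's $\mathbf 1_{\Supp w}\le C\mu^{-2+\gamma/N}\Psi_\mu(\lambda x)$ read the other way. It also handles the tails, $\theta_c$, $R_c$ and $T_c$ by Lemma~\ref{lem:small}, as the paper does, and the only slip is harmless: the antidivergence bound should read $C\lambda^{-1+2\gamma+4\gamma/b}\Psi_\mu(\lambda x)$, because you dropped the factor $\omega$.
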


\begin{proof}
We compare the products containing $w$, the viscous term, and the
oscillatory potentials with $\Psi_\mu(\lambda x)$. The remaining
corrections are uniformly small by Lemma~\ref{lem:small}.

\proofstep{Step 1: Comparison with the stationary profile}
The old-scale bound \eqref{eq:old-scale-bounds} and
Proposition~\ref{prop:coefficients} give
\eqref{eq:remaining-density-bound}.
Lemmas~\ref{lem:blocks} and \ref{lem:profile} imply
\[
\begin{aligned}
|w|&\le C\omega\mathbf 1_{\Supp w},\qquad
|\nabla w|\le C\lambda\mu\omega\mathbf 1_{\Supp w},\\
\mathbf 1_{\Supp w}&\le
C\mu^{-2+\gamma/N}\Psi_\mu(\lambda x).
\end{aligned}
\]
Since $|u|\le\omega$, the products containing $w$ satisfy
\[
\begin{aligned}
&\left|\left(\rho-\sum_k a_k\right)w\right|
+\left|\left(\rho-\sum_k a_k\right)
(u\otimes w+w\otimes u+w\otimes w)\right|\\
&\qquad\le C\delta_{n+1}^{-1}\omega^2
\mu^{-2+\gamma/N}\Psi_\mu(\lambda x).
\end{aligned}
\]
The viscous term is bounded by
\[
|\nabla w+\nabla w^T|
\le C\lambda\mu\omega\mu^{-2+\gamma/N}\Psi_\mu(\lambda x).
\]
For the oscillatory potentials, write $\Theta_k-1=g(\lambda x)$
with $g=\zeta_\mu(\cdot-z)-1$ and
$z=y_k+\lambda\omega t k\in L_k$.
By Proposition~\ref{prop:coefficients},
$\|f_k\|_{C^1}\le C\delta_{n+1}^{-1}$.
Lemma~\ref{lem:profile} gives
\[
|\Ro[(\Theta_k-1)f_k]|+|\Rs[(\Theta_k-1)f_kV_k]|
\le C\lambda^{-1}\omega\delta_{n+1}^{-1}
\mu^{\gamma/N}\Psi_\mu(\lambda x).
\]
The sum of these bounds is a multiple of $\Psi_\mu(\lambda x)$
with coefficient at most
\[
\begin{aligned}
&C\delta_{n+1}^{-1}\bigl(\lambda^{-1}\omega\mu^{\gamma/N}
+\omega^2\mu^{-2+\gamma/N}
+\lambda\mu\omega\mu^{-2+\gamma/N}\bigr)\\
&\qquad\le C\lambda^{-1+3\gamma}
\le\tfrac18\delta_{n+1}^{24}.
\end{aligned}
\]
Here $\mu^{\gamma/N}=\lambda^\gamma$,
$\mu^{-2+\gamma/N}=\lambda^{-2N+\gamma}$, and $27\gamma<1$.

\proofstep{Step 2: Localization and the compact corrections}
All terms estimated in Step 1 are supported in $\Supp\eta_u$,
where $\eta_d=1$ by Lemma~\ref{lem:cutoffs}. On $B_4$,
$\Psi_\mu(\lambda x)\le E(x)$ by
Lemma~\ref{lem:density-basic}. Outside $B_4$, the velocity
perturbation vanishes by Lemma~\ref{lem:blocks}, and the
oscillatory potentials are at most $\lambda^{-3/4}$ by
Lemma~\ref{lem:small}.

It remains to estimate $\theta_cu^+$,
$\theta_cu^+\otimes u^+$, $\chi\Ro\partial_t\theta_c$, and
$R_c,T_c$. Lemmas~\ref{lem:small} and \ref{lem:operators},
together with $|u^+|\le C\omega$, bound their weighted sum by
$C\delta_{n+1}^2\lambda^{-1/2}
+C\lambda^{-3/4}\omega^2$.
Using the formulas for $R^+,T^+$ and the three summands of
$\Delta P$ in Lemma~\ref{lem:continuity} proves the claim for
sufficiently large $a>0$.
\end{proof}

\begin{lem}[Time derivative of the density corrector]\label{lem:stationary-corrector}
For sufficiently large $a>0$,
\begin{equation}\label{eq:stationary-time-bound}
\delta_{n+1}^2|\chi\Ro\partial_t\theta_d|
\le\tfrac18\delta_{n+1}^{16}\eta_d E
+\tfrac1{20}\delta_{n+1}^{24}.
\end{equation}
\end{lem}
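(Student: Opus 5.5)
We start from the centered representation \eqref{eq:stationary-centered}. Since only $\eta_d$ depends on time,
\[
\partial_t\theta_d=\delta_{n+1}^{16}\eta_d'\sum_{m=0}^{8N}\lambda^{-m/2}\Bigl[\chi_mg(\lambda\cdot)-\Bigl(c_m-\fint\Psi_\mu\Bigr)\chi_m\Bigr],
\qquad g=\Psi_\mu-\fint\Psi_\mu .
\]
The mean-correction terms are harmless. By \eqref{eq:mean-correction-bound} and the boundedness of $\Ro$ on $L^\infty$, their contribution is at most $C\delta_{n+1}^{16}\tau^{-1}\lambda^{-1}d^{-1}$. After multiplying by $\delta_{n+1}^2$ and using \eqref{eq:old-scale-bounds}, this is far below $\frac1{40}\delta_{n+1}^{24}$.

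The main work concerns the terms $\Ro[\chi_mg(\lambda\cdot)]$. For each $m$ we split space according to $\chi_{m+1}$.

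\emph{Near region.} Take $x$ within distance $d$ of $\Supp\chi_m$, where $\chi_{m+1}(x)=1$ by \eqref{eq:nested-spatial-cutoffs}. There \eqref{eq:profile-self-antidivergence} gives
\[
|\Ro[\chi_mg(\lambda\cdot)](x)|\le C\lambda^{-1}d^{-1}\Psi_\mu(\lambda x)=C\lambda^{-1}d^{-1}\chi_{m+1}(x)\Psi_\mu(\lambda x).
\]

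\emph{Far region.} Take $\dist(x,\Supp\chi_m)\ge d$. Lemma~\ref{lem:far} gives a bound $Cd^{-6}\lambda^{-4}$, using $\|\Delta^{-2}g\|_{L^1}\le C\|g\|_{L^1}\le C$.

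The top index $m=8N$ needs separate treatment, because there is no $\chi_{8N+1}$. For it we use the crude bound $\lambda^{-4N}\|\Ro[\chi_{8N}g(\lambda\cdot)]\|_\infty\le C\lambda^{-4N}\|\Psi_\mu\|_\infty\le C\lambda^{-2N}$.

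Summing over $m$, we gain a factor $\lambda^{-1/2}$ relative to the next weight, because $\lambda^{-m/2}\lambda^{-1}=\lambda^{-1/2}\lambda^{-(m+1)/2}$. This yields
\[
|\Ro\partial_t\theta_d|\le C\delta_{n+1}^{16}|\eta_d'|\Bigl(\lambda^{-1/2}d^{-1}E+\lambda^{-1}\Bigr),
\]
where the remainder $\lambda^{-1}$ collects the far-field terms, the $m=8N$ term and the mean corrections.

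The main obstacle is the time cutoff. The left side carries $|\eta_d'|$, but the target carries $\eta_d$, and $\eta_d'$ is not pointwise dominated by $\eta_d$. We resolve this with the second bound in \eqref{eq:time-cutoff-bounds}, namely $|\eta_d'|\le C\tau^{-1}\eta_d^{1-1/(16N)}$. Split according to whether $\eta_d\ge\lambda^{-1/4}$ or not.
\begin{itemize}
\item If $\eta_d\ge\lambda^{-1/4}$, then $\eta_d^{1-1/(16N)}\le\lambda^{1/(64N)}\eta_d$. The $E$-term is then at most $C\tau^{-1}d^{-1}\lambda^{-1/2+1/(64N)}\delta_{n+1}^{18}\eta_dE$, which is below $\frac18\delta_{n+1}^{16}\eta_dE$.
\item If $\eta_d<\lambda^{-1/4}$, then $\eta_d^{1-1/(16N)}\le\lambda^{-1/4+1/(64N)}$. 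Combined with $E\le C\mu^2\sum_m\lambda^{-m/2}$ this appears useless, since $\mu^2=\lambda^{2N}$. To fix it we restructure the split: retain $\lambda^{-1/2}d^{-1}E$ multiplied by $\eta_d^{1-1/(16N)}$, and use Young's inequality $\eta_d^{1-\epsilon}E\le \epsilon'\eta_dE+C_{\epsilon'}\,\cdots$. This does not directly remove $E$.
\end{itemize}
We therefore exploit the exponent more carefully, pairing it with the $L^\infty$ size of $E$ via Young's inequality in the form
\[
\eta_d^{1-\epsilon}\lambda^{-1/2}E\le\eta_d\lambda^{-1/4}E+\lambda^{-(1/4)/\epsilon}E,
\]
with $\epsilon=1/(16N)$. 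The last term is $\lambda^{-4N}E\le C\lambda^{-4N}\lambda^{2N}\le C\lambda^{-2N}$, which is negligible against $\frac1{20}\delta_{n+1}^{24}$. This is presumably why the exponent $1/(16N)$ was chosen, and it is the step I would verify first.

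Finally, we multiply by $\chi\le1$ and $\delta_{n+1}^2$, and absorb all constants and the factors $\tau^{-1}$, $d^{-1}$ through \eqref{eq:old-scale-bounds} for large $a$. This proves \eqref{eq:stationary-time-bound}.
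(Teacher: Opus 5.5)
Your proposal is correct and follows essentially the paper's argument: bound $\Ro[\chi_m g(\lambda\cdot)]$ by $C\lambda^{-1}d^{-1}\chi_{m+1}\Psi_\mu(\lambda\cdot)$ near $\Supp\chi_m$ and by the far-field estimate elsewhere, treat $m=8N$ and the mean corrections crudely, and then apply Young's inequality with the exponent $1/(16N)$ from Lemma~\ref{lem:cutoffs}, trading $\eta_d^{1-1/(16N)}$ for $\eta_d$ plus a $\lambda^{-4N}\|E\|_\infty\lesssim\lambda^{-2N}$ remainder. Two minor points: the far-field bound should carry the factor $\|\chi_m\|_{C^4}\le Cd^{-4}$, giving $Cd^{-10}\lambda^{-4}$, which is harmless; and the abandoned case split at the threshold $\lambda^{-1/4}$ can be deleted, since only the Young step, which matches the paper's $\eta_d^{1-1/(16N)}\lambda^{-1/4}\le\eta_d/8+C_N\lambda^{-4N}$, is needed.
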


\begin{proof}
This is the estimate for which the nested cutoffs in $\theta_d$
were introduced. The inverse divergence of the $m$ th summand is
controlled on the next cutoff, while the final coefficient
$\lambda^{-4N}$ makes its unlocalized bound small. We make this
comparison before using the derivative bound for $\eta_d$.

Let $g=\Psi_\mu-\fint\Psi_\mu$. Differentiating the centered
representation in Lemma~\ref{lem:density-basic} replaces $\eta_d$
by $\eta_d'$. The pointwise estimate of Lemma~\ref{lem:profile} gives
\[
|\Ro[\chi_mg(\lambda\cdot)]|
\le C\lambda^{-1}d^{-1}\Psi_\mu(\lambda x).
\]
At distance at least $d$ from $\Supp\chi_m$,
Lemma~\ref{lem:far} improves this to
$Cd^{-10}\lambda^{-4}\le\lambda^{-3}$. Multiplying by the coefficient of the $m$ th summand, we obtain,
for $m<8N$,
\[
\begin{aligned}
\lambda^{-m/2}|\Ro[\chi_mg(\lambda\cdot)]|
&\le C\lambda^{-1/2}d^{-1}
\lambda^{-(m+1)/2}\chi_{m+1}\Psi_\mu(\lambda x)\\
&\quad+\lambda^{-3-m/2}.
\end{aligned}
\]
The first line is a small multiple of the next positive summand of
$E$. For $m=8N$, the same pointwise estimate and
$\|\Psi_\mu\|_\infty\le C\mu^2$ from Lemma~\ref{lem:profile} give
\[
\lambda^{-4N}|\Ro[\chi_{8N}g(\lambda\cdot)]|
\le C\lambda^{-4N-1}d^{-1}\mu^2.
\]
The mean corrections are bounded by
Lemma~\ref{lem:density-basic}, while $|\Ro\chi_m|\le C$ by
Lemma~\ref{lem:operators}.

The old-scale bound \eqref{eq:old-scale-bounds} gives
$C\delta_{n+1}^2\tau^{-1}d^{-1}\lambda^{-1/2}\le\lambda^{-1/4}$.
Apply Lemma~\ref{lem:cutoffs} and sum the preceding bounds:
\begin{align*}
\delta_{n+1}^2|\chi\Ro\partial_t\theta_d|
&\le\delta_{n+1}^{16}\eta_d^{1-1/(16N)}\lambda^{-1/4}
 \sum_{m<8N}\lambda^{-(m+1)/2}\chi_{m+1}\Psi_\mu(\lambda x)\\
&\quad+C\delta_{n+1}^{16}
 \bigl(\lambda^{-3/4}+\lambda^{-4N-3/4}\mu^2\bigr).
\end{align*}
The second line is at most $\delta_{n+1}^{24}/40$ for sufficiently
large $a>0$, since $\lambda^{-4N}\mu^2=\lambda^{-2N}$ and
$24\gamma<1/4$.
For the first line, Young's inequality gives
\[
\eta_d^{1-1/(16N)}\lambda^{-1/4}
\le\frac{\eta_d}{8}+C_N\lambda^{-4N}.
\]
The weighted sum is bounded by $E$ and $\|E\|_\infty\le C\mu^2$
by Lemma~\ref{lem:profile}. Thus its contribution is at most
\[
\frac18\delta_{n+1}^{16}\eta_d E
+C\delta_{n+1}^{16}\lambda^{-2N}
\le\frac18\delta_{n+1}^{16}\eta_d E
+\frac1{40}\delta_{n+1}^{24}.
\]
This proves the stated bound.
\end{proof}

\begin{proof}[Proof of Lemma~\ref{lem:pointwise}]
By the input bounds in Proposition~\ref{prop:step} and
Lemma~\ref{lem:blocks},
$|u^+|\le|u|+|w|\le C\omega$. Using
Lemma~\ref{lem:density-basic} and
$\delta_{n+1}^2\omega=\lambda^{-2\gamma+4\gamma/b}$, we obtain
\[
\delta_{n+1}^2|\theta_du^+|+\delta_{n+1}^4|\theta_du^+\otimes u^+|
\le C\delta_{n+1}^{16}\eta_d E(\delta_{n+1}^2\omega+\delta_{n+1}^4\omega^2)
\le\tfrac18\delta_{n+1}^{16}\eta_d E.
\]
Combining this bound with Lemmas~\ref{lem:principal} and
\ref{lem:stationary-corrector} gives
\begin{equation}\label{eq:pointwise-defects}
\delta_{n+1}^2|R^+|+\delta_{n+1}^4|T^+|
\le\tfrac12\delta_{n+1}^{16}\eta_d E
+\tfrac1{10}\delta_{n+1}^{24},
\end{equation}
because $C\lambda^{-1/2}\le\delta_{n+1}^{24}/20$.

By Lemmas~\ref{lem:density-basic} and \ref{lem:small}, the full
density perturbation satisfies
\[
\theta\ge-\sum_k a_k+\delta_{n+1}^{16}\eta_d E
-C\delta_{n+1}^{16}-\lambda^{-3/4}.
\]
Comparing this with \eqref{eq:pointwise-defects} proves
\eqref{eq:total-density-lower-bound}, after absorbing the uniformly
small terms into $C\delta_{n+1}^{16}$.
By Proposition~\ref{prop:coefficients} and $\omega=\delta_n^{-4}$,
\[
\sum_ka_k\le48\delta_n^{12}+24\delta_n^4|R|+24\delta_n^8|T|
\le48\delta_n^{12}+\delta_n^2|R|+\delta_n^4|T|
\]
for sufficiently large $a>0$. Since
$48\delta_n^{12}+C\delta_{n+1}^{16}\le\delta_n^2$,
this gives the density lower bound required in
Proposition~\ref{prop:step}.
\end{proof}

\subsection{Integral estimates}

The pointwise bound already controls the defects in $L^p$.
For convergence of the solutions, we also need small density
increments and summable increments of momentum, flux, and pressure.
The latter are estimated directly, since the density and velocity
norms alone do not identify their products.

\begin{lem}[Integral estimates]\label{lem:integral}
For sufficiently large $a>0$, the new defects and the density increment satisfy
\begin{equation}\label{eq:integral-defects}
\|(R^+,T^+)\|_{C_tL^p}\le\delta_{n+1}^8
\end{equation}
and
\begin{equation}\label{eq:integral-density}
\|\rho^+-\rho\|_{C_tL^p}\le\delta_n^2.
\end{equation}
If $n\ge1$, then
\begin{equation}\label{eq:integral-products}
\begin{aligned}
&\|\rho^+u^+-\rho u\|_{C_tL^1}
+\|\rho^+u^+\otimes u^+-\rho u\otimes u\|_{C_tL^1}\\
&\qquad+\|\Pi^+-\Pi\|_{L^1_{t,x}}\le\delta_n^2.
\end{aligned}
\end{equation}
\end{lem}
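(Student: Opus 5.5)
The defect bound \eqref{eq:integral-defects} will come directly from the pointwise estimate \eqref{eq:pointwise-defects} proved in Lemma~\ref{lem:pointwise}. Dividing by $\delta_{n+1}^4$ gives
\[
|R^+|+|T^+|\le \delta_{n+1}^{-4}\bigl(\tfrac12\delta_{n+1}^{16}\eta_dE+\tfrac1{10}\delta_{n+1}^{24}\bigr),
\]
since $\delta_{n+1}\le1$. Taking $L^p$ norms and using $\|E\|_{L^p}\le C$ from Lemma~\ref{lem:density-basic} yields $\|(R^+,T^+)\|_{C_tL^p}\le C\delta_{n+1}^{12}\le\delta_{n+1}^8$ for large $a$.

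For the density increment \eqref{eq:integral-density}, I will split $\theta=\theta_p+\theta_c+\theta_d$ and treat the pieces separately.
\begin{itemize}
\item $\theta_c$ and $\theta_d$ are harmless: $\|\theta_c\|_\infty\le\lambda^{-3/4}$ by Lemma~\ref{lem:small}, and \eqref{eq:stationary-absolute-bound} with $\|E\|_{L^p}\le C$ gives $\|\theta_d\|_{L^p}\le C\delta_{n+1}^{16}$.
\item The main term is $\theta_p=\sum_ka_k(\Theta_k-1)$. I will apply the product estimate \eqref{eq:product-norm} of Lemma~\ref{lem:products} with $f=a_k(t,\cdot)$ and $g=\zeta_\mu(\cdot-y_k-\lambda\omega tk)$, using translation invariance.
\item This gives $\|a_k\Theta_k\|_{L^p}\le C\mu^{3-3/p}(\|a_k\|_{L^p}+\lambda^{-1}\|\nabla a_k\|_\infty)$.
\item By \eqref{eq:coefficient-bounds}, $\|a_k\|_{L^p}\le C\omega^{-1}(\delta_n^8+\|R\|_{L^p}+\|T\|_{L^p})\le C\delta_n^{12}$ when $n\ge1$, using \eqref{eq:input-bounds}. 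At $n=0$ this becomes $C_*\omega^{-1}=C_*\delta_0^4$, which is still at most $\delta_0^2$ for large $a$.
\item The gradient term is at most $\lambda^{-1}\delta_{n+1}^{-1}\omega^{-1}$ by \eqref{eq:coefficient-derivatives}, which is negligible.
\item The key numerical point is $\mu^{3-3/p}=\lambda^{3N(p-1)/p}\le\lambda^{3\gamma/N^9}=\delta_n^{-3b\gamma/N^9\cdot1/\gamma}$. Since $b=N^8$, this is $\delta_n^{-3/N}$, so it only slightly lowers the exponent. The result is $\|\theta_p\|_{L^p}\le C\delta_n^{4-3/N}\le\frac12\delta_n^2$.
\end{itemize}

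For \eqref{eq:integral-products} I will use the explicit increment formulas.

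\emph{Momentum.} Identity \eqref{eq:momentum-increment} gives $\rho^+u^+-\rho u=\sum_ka_k(\Theta_k-1)V_k-\chi\nabla P-R_c+R^+-R$. The transported sum has $L^1$ norm at most $2\omega\sum_k\|a_k\|_{L^1}\le C\omega\omega^{-1}(\delta_n^8+\|R\|_{L^1}+\omega^{-1}\|T\|_{L^1})$. Using $\omega=\delta_n^{-4}$ and $\|R\|_{L^1}\le\|R\|_{L^p}\le2\delta_n^8$, this is at most $C\delta_n^8$ for $n\ge1$; this is where $n\ge1$ is needed. The terms $R^+$, $R$ and $R_c$ are already small by \eqref{eq:integral-defects}, \eqref{eq:input-bounds} and Lemma~\ref{lem:small}.

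The term $\chi\nabla P$ is the one I expect to be the main obstacle, since it must be bounded in $L^1$ uniformly in time. I will write
\[
\nabla P=\sum_k\Ro[(\Theta_k-1)f_k]+\Ro\partial_t\theta_c+\Ro\partial_t\theta_d .
\]
\begin{itemize}
\item The first part is bounded by \eqref{eq:periodic-antidivergence-L1}: $C\lambda^{-1}\|f_k\|_{C^1}\le C\lambda^{-1}\delta_{n+1}^{-1}$.
\item The second part is bounded by Lemma~\ref{lem:small}.
\item For the third part, Lemma~\ref{lem:stationary-corrector} gives a pointwise bound by $\delta_{n+1}^{14}E$ plus small terms, hence an $L^1$ bound $C\delta_{n+1}^{14}$. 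Alternatively, $\Ro$ is bounded on $L^1$ and $\|\partial_t\theta_d\|_{L^1}\le C\delta_{n+1}^{16}\tau^{-1}$.
\end{itemize}

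\emph{Flux.} I will use \eqref{eq:full-flux-increment}.
\begin{itemize}
\item The transported term gives $C\omega^2\sum_k\|a_k\|_{L^1}\le C\omega\delta_n^8=C\delta_n^4$.
\item $\|\pi\|_{L^1}\le C(\omega\delta_n^8+\omega\|R\|_{L^1}+\|T\|_{L^1})\le C\delta_n^4$, and $\|T\|_{L^1}\le2\delta_n^8$.
\item The $w$-terms use $|\rho-\sum_ka_k|\le\delta_{n+1}^{-1}$ together with $\|w\|_{L^1}+\|w\|_{L^2}^2\le C\omega^2\mu^{-3}$ from \eqref{eq:velocity-support}.
\item The term $(\theta_c+\theta_d)u^+\otimes u^+$ is bounded pointwise by $C\omega^2(\lambda^{-3/4}+\delta_{n+1}^{16}E)$, which is small in $L^1$.
\end{itemize}

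\emph{Pressure.} Since $\Pi^+-\Pi=-\pi+\chi\partial_tP$, it remains to bound $\partial_tP$ in $L^1_{t,x}$.
\begin{itemize}
\item Writing $P=\Delta^{-1}\Pne(\cdots)$, the terms $\partial_t\theta_c$ and $\partial_t\theta_d$ contribute $L^1$ norms bounded by $\|\partial_t^2\theta_c\|+\|\partial_t^2\theta_d\|_{L^1}\le C\delta_{n+1}^{16}\tau^{-2}+\lambda^{-3/4}$.
\item For the oscillatory term, time derivatives of $\Theta_k$ cost $\lambda\omega$. I will handle this by writing $\partial_t\Theta_k=-V_k\cdot\nabla\Theta_k$ and moving the gradient onto $\Delta^{-1}$, giving $\nabla\Delta^{-1}=\Ro$.
\item Applying \eqref{eq:periodic-antidivergence-L1} again, the whole term is bounded by $C\omega\lambda^{-1}\delta_{n+1}^{-1}$ plus terms in $\partial_tf_k$ of the same size, which is small.
\end{itemize}
Summing all contributions gives the bound $\le\delta_n^2$ for large $a$.
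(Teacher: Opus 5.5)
Your proposal is correct and follows essentially the same route as the paper. It uses the pointwise defect bound for $R^+,T^+$, the product estimate with the $\mu^{3-3/p}$ loss for $\theta_p$, and the increment identities \eqref{eq:momentum-increment}, \eqref{eq:full-flux-increment} and $\Pi^+-\Pi=-\pi+\chi\partial_tP$, with the transport identity turning $\partial_t\Theta_k$ into a divergence. Your crude bounds $\|\partial_t^j\theta_d\|_{L^1}\le C\delta_{n+1}^{16}\tau^{-j}$, used in place of the paper's centered-representation estimates, are a harmless simplification.

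One small slip: $\|a_k(\Theta_k-1)\|_{L^1}\le 2\|a_k\|_{L^1}$ is not exact. Applying \eqref{eq:product-norm} with $q=1$ adds a term $C\lambda^{-1}\|a_k\|_{C^1}\le C\lambda^{-1}\delta_{n+1}^{-1}\omega^{-1}$, which remains negligible after multiplying by $\omega$ or $\omega^2$.
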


\begin{proof}
The defect estimates of Lemmas~\ref{lem:principal} and
\ref{lem:stationary-corrector}, together with
Lemma~\ref{lem:density-basic} and $|u^+|\le C\omega$, give
\[
\|R^+\|_{L^p}\le C\delta_{n+1}^{14},\qquad
\|T^+\|_{L^p}\le C\delta_{n+1}^{12}.
\]
For sufficiently large $a>0$, this proves \eqref{eq:integral-defects}. We now estimate
the increments of the density and the nonlinear quantities.

\proofstep{Step 1: The density increment}
For $n\ge1$, Proposition~\ref{prop:coefficients} and the input
bounds in Proposition~\ref{prop:step} give
$\|a_k\|_{L^p}\le\|\sum_ka_k\|_{L^p}\le C\delta_n^{12}$.
For $n=0$, the input is the fixed $Z_*$ and the same proposition
gives $\|a_k\|_\infty\le C_*\omega^{-1}=C_*\delta_0^4$.
In both cases we may use the weaker bound
$\|a_k\|_{L^p}\le C_*\delta_n^4$.

Since $3N(1-1/p)<3N(p-1)<\gamma/b$,
$\|\zeta_\mu\|_{L^p}\le C\mu^{3-3/p}\le C\delta_n^{-1}$.
Lemma~\ref{lem:products} and Proposition~\ref{prop:coefficients}
therefore yield
\[
\begin{aligned}
\|\theta_p\|_{L^p}
&\le C_*\delta_n^{-1}
\bigl(\delta_n^4+\lambda^{-1}\delta_{n+1}^{-1}\omega^{-1}\bigr)
+C_*\delta_n^4\\
&\le C_*\delta_n^3.
\end{aligned}
\]
Lemmas~\ref{lem:small} and \ref{lem:density-basic} give
\[
\|\theta_c\|_\infty\le\lambda^{-3/4},\qquad
\|\theta_d\|_{L^p}\le C\delta_{n+1}^{16}.
\]
For sufficiently large $a>0$, their sum with $\theta_p$ is at most $\delta_n^2$.
In the remaining steps we assume $n\ge1$.

\proofstep{Step 2: The momentum increment}
Use the modified momentum identity in Lemma~\ref{lem:continuity}.
By Lemma~\ref{lem:products} with $q=1$ and the coefficient bound
in Step 1,
\[
\Bigl\|\sum_ka_k(\Theta_k-1)V_k\Bigr\|_{L^1}\le\omega\sum_k\bigl(2\|a_k\|_{L^1}+C\lambda^{-1}\|a_k\|_{C^1}\bigr)
\le C\delta_n^8+C\lambda^{-1}\delta_{n+1}^{-1}.
\]
By Lemma~\ref{lem:products} and
Proposition~\ref{prop:coefficients},
\[
\|\Ro[(\Theta_k-1)f_k]\|_{L^1}
\le C\lambda^{-1}\|f_k\|_{C^1}
\le C\lambda^{-1}\delta_{n+1}^{-1}.
\]
For $\partial_t\theta_d$, use the centered representation and mean
estimate in Lemma~\ref{lem:density-basic}, then apply
Lemma~\ref{lem:products} to obtain
\[
\|\Ro\partial_t\theta_d\|_{L^1}
\le C\tau^{-1}\lambda^{-1}d^{-1}.
\]
The decomposition of $\Delta P$ in Lemma~\ref{lem:continuity}
and the moment bound in Lemma~\ref{lem:small} now give
$\|\chi\nabla P\|_{L^1}\le\lambda^{-1/2}$ for sufficiently large
$a>0$. The same bound holds for $R_c$. Combining these estimates
with \eqref{eq:integral-defects} and the input bounds in
Proposition~\ref{prop:step} yields
\[
\begin{aligned}
\|\rho^+u^+-\rho u\|_{L^1}
&\le\left\|\sum_k a_k(\Theta_k-1)V_k\right\|_{L^1}
+\|\chi\nabla P+R_c\|_{L^1}\\
&\qquad+\|R^+\|_{L^p}+\|R\|_{L^p}
\le C\delta_n^8.
\end{aligned}
\]

\proofstep{Step 3: The momentum flux}
We use the flux identity in Lemma~\ref{lem:exact}. The principal
oscillatory term is bounded by Lemma~\ref{lem:products} and
the coefficient bound in Step 1:

\[
C\omega^2\bigl(\delta_n^{12}
+\lambda^{-1}\delta_{n+1}^{-1}\omega^{-1}\bigr)
\le C\delta_n^4.
\]
The scalar pressure correction has the same size. Indeed,
Proposition~\ref{prop:coefficients} gives
\[
\begin{aligned}
\|\pi\|_{L^1}
&\le C\bigl(\omega\delta_n^8
+\omega\|R\|_{L^1}+\|T\|_{L^1}\bigr)\\
&\le C\delta_n^4,
\end{aligned}
\]
and $\|T\|_{L^1}\le2\delta_n^8$.

For the remaining products, combine
\eqref{eq:remaining-density-bound} with the support and velocity
bounds of Lemma~\ref{lem:blocks}:
\[
\left\|\left(\rho-\sum_k a_k\right)
(u\otimes w+w\otimes u+w\otimes w)\right\|_{L^1}
\le C\delta_{n+1}^{-1}\omega^2\mu^{-3}.
\]
By Lemma~\ref{lem:small} and $|u^+|\le C\omega$,
$\|\theta_cu^+\otimes u^+\|_{L^1}\le\lambda^{-1/2}$. Finally,
Lemma~\ref{lem:density-basic}, and $|u^+|\le C\omega$ give
\[
\begin{aligned}
\|\theta_du^+\otimes u^+\|_{L^1}
&\le C\delta_{n+1}^{16}\omega^2\|E\|_{L^1}\\
&\le C\delta_{n+1}^{12}.
\end{aligned}
\]
Adding these estimates bounds the full flux increment by
$C\delta_n^4$.

\proofstep{Step 4: The pressure increment}
For the pressure in Lemma~\ref{lem:exact}, recall
$\Pi^+-\Pi=-\pi+\chi\partial_tP$ and use
$\|\pi\|_{L^1}\le C\delta_n^4$ from Step 3. By
Lemma~\ref{lem:blocks},
\[
\partial_t\bigl[(\Theta_k-1)f_k\bigr]=-\D\bigl[(\Theta_k-1)f_kV_k\bigr]
+(\Theta_k-1)(\partial_t+V_k\cdot\nabla)f_k .
\]
Apply $\Delta^{-1}\Pne$ to this identity. This operator commutes
with $\D$, and Lemma~\ref{lem:products} applies to
both resulting potentials. Hence
\[
\begin{aligned}
&\bigl\|\partial_t\Delta^{-1}\Pne[(\Theta_k-1)f_k]\bigr\|_{L^1}\\
&\quad\le C\lambda^{-1}
\bigl(\omega\|f_k\|_{C^1}
+\|(\partial_t+V_k\cdot\nabla)f_k\|_{C^1}\bigr)\\
&\quad\le C\lambda^{-1}\omega\delta_{n+1}^{-1}.
\end{aligned}
\]
Further, $\|\Delta^{-1}\Pne\partial_t^2\theta_c\|_{L^1}\le C\lambda^{-3/4}$ by
\eqref{eq:small-moments}. Differentiate the centered representation
in Lemma~\ref{lem:density-basic} twice, then apply
Lemmas~\ref{lem:cutoffs} and \ref{lem:products} to obtain
\[
\|\Delta^{-1}\partial_t^2\theta_d\|_{L^1}
\le C\tau^{-2}\lambda^{-1}d^{-1}.
\]
Hence $\|\Pi^+-\Pi\|_{L^1_{t,x}}\le C\delta_n^4+\lambda^{-1/2}$. For sufficiently
large $a>0$ all three quantities are at most $\frac13\delta_n^2$.
\end{proof}

\subsection{Derivative bounds}

It remains to control the growth of the smooth norms. We use a coarse
polynomial bound: the construction costs at most a power of
$\lambda$ linear in $N$, while the iteration allows $\lambda^{N^2}$.

\begin{lem}[Derivative bounds]\label{lem:derivatives}
For sufficiently large $a>0$,
\[
\|(\rho^+,u^+,R^+,T^+)\|_{C^3_{t,x}}
+\|\Pi^+-\Pi\|_{C^3_{t,x}}\le\tfrac12\lambda^{N^2}.
\]
\end{lem}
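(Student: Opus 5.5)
The plan is a coarse bookkeeping argument. Every building block of $Z^+$ has $C^3_{t,x}$ norm bounded by a fixed power of $\lambda$ that is linear in $N$ (of order $\lambda^{CN}$ with $C$ independent of $N$). Since $N$ is large, $\lambda^{CN}\le\frac12\lambda^{N^2}$ once $a$ is large. No cancellation is needed; we only track how many factors of $\lambda\mu\omega$ each term can produce.

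\textbf{The basic quantities.} By \eqref{eq:old-scale-bounds}, the old fields $(\rho,u,R,T)$ and all cutoffs have $C^{20}_{t,x}$ norm at most $C\lambda^{\gamma/1000}$. The coefficients $a_k,\pi,f_k$ are bounded in $C^{9}_{t,x}$ by $C\lambda^{\gamma}$ by Proposition~\ref{prop:coefficients}. The profiles need more care:
\begin{itemize}
\item $\Theta_k$ is a rescaled bump with $\|\Theta_k\|_{C^j_{t,x}}\le C\mu^3(\lambda\mu\omega)^j$.
\item $\Psi_\mu(\lambda\cdot)$ satisfies $\|\nabla^j\Psi_\mu(\lambda\cdot)\|_\infty\le C\lambda^j\mu^{2+j}$ by \eqref{eq:profile-integrability}.
\item $w$ satisfies $\|w\|_{C^j_{t,x}}\le C\omega(\lambda\mu\omega)^j$ by \eqref{eq:velocity-derivatives}.
\end{itemize}
Using Lemma~\ref{lem:calculus}, this gives $\|\theta_p\|_{C^3}+\|\theta_d\|_{C^3}\le C\lambda^{6N+4}$ and $\|u^+\|_{C^3}\le C\lambda^{4N+4}$. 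We already have $\|\theta_c\|_{C^6}\le\lambda^{-3/4}$ by \eqref{eq:small-moments}. Hence $\|\rho^+\|_{C^3}\le C\lambda^{7N}$.

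\textbf{The nonlocal terms.} These are $P$, $\Rs[(\Theta_k-1)f_kV_k]$, $R_c$ and $T_c$. Lemma~\ref{lem:operators} says $\Ro$, $\Rs$ and $\Delta^{-1}\Pne$ are bounded on $C^j_{t,x}$ and commute with derivatives. Therefore
\[
\|\nabla P\|_{C^4_{t,x}}+\|\partial_tP\|_{C^3_{t,x}}
\]
is controlled by the $C^5_{t,x}$ norms of $\sum_k(\Theta_k-1)f_k$, $\partial_t\theta_c$ and $\partial_t\theta_d$. That is at most $C\lambda^{10N}$; the extra time derivative costs only a factor $\lambda\mu\omega$ or $\tau^{-1}$. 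The same applies to the $\Rs$ term.

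For $R_c$ and $T_c$, Lemmas~\ref{lem:bog} and~\ref{lem:sym} lose at most one derivative, so
\[
\|T_c\|_{C^3}\le C\|\,\cdot\,\|_{C^4}.
\]
Each bracket in \eqref{eq:stress-correction} involves $P$, the $\Rs$ potentials and $\partial_tR_c$, all already bounded by $\lambda^{CN}$. The curl term in $R_c$ has only time-dependent coefficients, which are integrals bounded using \eqref{eq:averaging-time-derivatives}.

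\textbf{Assembling $R^+$, $T^+$ and $\Pi^+$.} Insert these bounds into the explicit formulas \eqref{eq:new-R}, \eqref{eq:new-T} and \eqref{eq:new-pressure}, using the product estimate of Lemma~\ref{lem:calculus}. The worst terms are $\theta_du^+\otimes u^+$, $(\nabla w+\nabla w^T)$ and $\chi\Rs[\cdots]$, each at most $C\lambda^{12N}$. The pressure increment $-\pi+\chi\partial_tP$ is bounded the same way.

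\textbf{Main obstacle.} The only delicate point is that the number of derivatives used stays within what is available. The coefficient bounds hold only up to order $9$–$10$, and the $C^{20}$ input bounds feed these. Counting, we need at most about five derivatives of $f_k$, hence at most six of $a_k$ plus one more for the loss in $\Sym_U$, which is within range. So the total exponent is at most $CN$ with $C$ absolute (say $C\le 20$). Choosing $N\ge 40$, which is allowed since $N\ge\gamma^{-1}>10^3$, gives $C\lambda^{20N}\le\frac12\lambda^{N^2}$ for large $a$.
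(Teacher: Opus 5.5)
Your proposal is correct and follows essentially the same route as the paper. You bound $\Theta_k$, $W_k$, $\Psi_\mu(\lambda\cdot)$ and the coefficients by powers of $\lambda\mu\omega$, and push these bounds through the $C^j_{t,x}$-bounded periodic operators, the Bogovskii operator and $\Sym_U$. The one-derivative loss of $\Sym_U$ on the terms $\partial_tR_c$ and $(\partial_tP)\nabla\chi$ is what forces control of $\|\Delta P\|_{C^5_{t,x}}$, and hence the $C^6$ bound on $\theta_c$. You then absorb the resulting $\lambda^{O(N)}$ (the paper uses $\lambda^{50N}$) into $\tfrac12\lambda^{N^2}$. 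Only cosmetic slips remain, and your own derivative count already accommodates both: $\|\pi\|_{C^j_{t,x}}\le\delta_{n+1}^{-1}\omega$ is slightly larger than $C\lambda^{\gamma}$, and $T_c$ needs $\|\nabla P\|_{C^5_{t,x}}$ rather than $C^4$.
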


\begin{proof}
We first bound the density and velocity perturbations, then the new
errors. The losses from cutoff derivatives are controlled by
\eqref{eq:old-scale-bounds} and absorbed in the extra powers of
$\delta_{n+1}^{-1}$ below.

\proofstep{Step 1: The perturbations and transport errors}
For $0\le j\le6$, all unspecified norms are $C^j_{t,x}$ norms.
Differentiate the moving profiles of Lemma~\ref{lem:blocks} and
use its velocity estimate, together with the derivative bound of
Lemma~\ref{lem:profile}, to obtain
\[
\begin{aligned}
\|\Theta_k\|&\le C\mu^3(\lambda\mu\omega)^j,\\
\|W_k\|&\le C\omega(\lambda\mu\omega)^j,\\
\|\Psi_\mu(\lambda\cdot)\|&\le C\mu^2(\lambda\mu\omega)^j.
\end{aligned}
\]
By Proposition~\ref{prop:coefficients},
\[
\|a_k\|\le\delta_{n+1}^{-1},\qquad
\|\pi\|+\|f_k\|\le C\delta_{n+1}^{-1}\omega.
\]
Apply Lemma~\ref{lem:calculus} to the density perturbations of
Lemma~\ref{lem:density-basic}. Its bound $c_m\le C$ gives
\[
\begin{aligned}
\|\theta_p\|&\le\delta_{n+1}^{-2}\mu^3(\lambda\mu\omega)^j,\\
\|\theta_d\|&\le\delta_{n+1}^{14}\mu^2(\lambda\mu\omega)^j,
\end{aligned}
\]
while $\|\theta_c\|\le1$ by \eqref{eq:small-moments}.
The defining equation for $P$ in Lemma~\ref{lem:continuity} and
the transported sum in Lemma~\ref{lem:exact} involve these same
terms. Thus, for $1\le j\le6$,
\[
\|\Delta P\|_{C^{j-1}_{t,x}}
+\left\|\sum_k(\Theta_k-1)f_kV_k\right\|_{C^{j-1}_{t,x}}
\le\delta_{n+1}^{-2}\mu^3(\lambda\mu\omega)^j.
\]
Here each derivative of a cutoff costs at most a power of
$\tau^{-1}+d^{-1}\le\lambda_n$. The sixth derivative of $\theta_c$
is covered by \eqref{eq:small-moments}.

\proofstep{Step 2: The new defects and pressure}
Apply Lemmas~\ref{lem:operators} and \ref{lem:bog} to $P$ and
the formula for $R_c$ in Lemma~\ref{lem:continuity}. For
$0\le j\le5$, this gives
\[
\|P\|_{C^j_{t,x}}+\|\nabla P\|_{C^j_{t,x}}
+\|R_c\|_{C^j_{t,x}}
\le\delta_{n+1}^{-3}\mu^3(\lambda\mu\omega)^{j+1}.
\]
The formula for $T_c$ in Lemma~\ref{lem:exact} and
Lemma~\ref{lem:sym} then yield
\[
\|T_c\|_{C^3_{t,x}}
\le\delta_{n+1}^{-4}\mu^3(\lambda\mu\omega)^6\omega.
\]

These estimates apply to all fields of
Lemma~\ref{lem:exact}. The products are controlled by
Lemma~\ref{lem:calculus}, and the final symmetric antidivergence by
Lemma~\ref{lem:sym}. Together with the velocity bound, all the
required norms are dominated by
\[
\begin{aligned}
C\delta_{n+1}^{-5}\mu^3\omega^2(\lambda\mu\omega)^9
&\le\lambda^{50N}
\le\lambda^{N^2/2}.
\end{aligned}
\]
Here $50N<N^2/2$, so this is smaller than
$\frac12\lambda^{N^2}$ for sufficiently large $a>0$.
\end{proof}

\subsection{Proof of Proposition~\ref{prop:step}}

\begin{proof}[Proof of Proposition~\ref{prop:step}]
Take the fields $Z^+$ of Lemma~\ref{lem:exact}. That lemma gives
the equations and support, with
$K'=\Supp\eta_d$ and
$U\subset B(x_0,\tfrac12(r_-+ r_+))$.
Lemma~\ref{lem:pointwise} gives the lower bound for the total
density perturbation, and Lemma~\ref{lem:integral} gives the
defect bounds.

The increment bounds follow from \eqref{eq:integral-density},
Lemma~\ref{lem:blocks} and
\[
\|u^+\|_\infty\le2\delta_n^{-1}+C\omega
\le\delta_{n+1}^{-1}.
\]
The nonlinear estimates are \eqref{eq:integral-products}, and the
derivative bound is Lemma~\ref{lem:derivatives}.
\end{proof}

\section{Regularization and proof of Proposition~\ref{prop:main}}\label{sec:induction}

Proposition~\ref{prop:step} controls three derivatives, whereas its
application at the next stage requires twenty. We recover these
derivatives by regularization. The linear form of the approximate
equations lets us mollify without introducing a new equation error;
we then choose the support gaps and close the induction.

\subsection{Regularization}

We mollify differences from the fixed approximate solution in the
linear variables of Definition~\ref{def:system}.

Let $Z_*=(\rho_*,u_*,\bar\Pi,R_*,T_*)$ be as in Lemma~\ref{lem:ext}, with
modified momentum $m_*$ and modified flux $\Phi_*$ as in
Definition~\ref{def:system}. Fix an even, nonnegative scalar
$\kappa\in C^\infty_c(\RR^4)$, supported in the unit ball, with
integral one, and put $\kappa_h(t,x)=h^{-4}\kappa(t/h,x/h)$.
For a function $F$
on $[0,1]\times\TT^3$, smooth and scalar, vector, or matrix valued,
supported in a compact set
$K\subset(0,1)\times\TT^3$ and $0<h<\frac12\dist(K,\{0,1\}\times\TT^3)$,
let $F_h=\kappa_h*F$ (space--time convolution, $F$ extended by zero);
$F_h(t,x)=0$ whenever $\dist((t,x),K)>h$. Elementary properties of convolution give,
for $k\ge0$,
\begin{equation}\label{eq:mollifier}
\|F_h-F\|_{C^k_{t,x}}\le2h\|F\|_{C^{k+1}_{t,x}},\qquad
\|F_h\|_{C^{k}_{t,x}}\le C_kh^{-\max\{k-3,0\}}\|F\|_{C^3_{t,x}} .
\end{equation}

\Needspace{8\baselineskip}
\begin{lem}[Regularization]\label{lem:reg}
Let $Z=(\rho,u,\Pi,R,T)$ be a smooth solution of \eqref{eq:defect} from Definition~\ref{def:system}
such that $Z-Z_*$ is supported in a compact set
$K\subset(0,1)\times\TT^3$, and let $h$ be as above. Define the
regularized density, velocity and pressure by
\begin{equation}\label{eq:regularized-fields}
\begin{aligned}
\rho'&=\rho_*+(\rho-\rho_*)_h,\\
u'&=u_*+(u-u_*)_h,\\
\Pi'&=\bar\Pi+(\Pi-\bar\Pi)_h.
\end{aligned}
\end{equation}
Recover the defects from the regularized modified momentum and flux:
\begin{equation}\label{eq:regularized-defects}
\begin{aligned}
R'&=\rho'u'-m_*-(m-m_*)_h,\\
T'&=\rho'u'\otimes u'-\Phi_*-(\Phi-\Phi_*)_h.
\end{aligned}
\end{equation}
Then $Z'=(\rho',u',\Pi',R',T')$ is a smooth solution of
\eqref{eq:defect} from Definition~\ref{def:system}, and $Z'(t,x)=Z_*(t,x)$ whenever $\dist((t,x),K)>h$.

Set the positive constant
\[
\begin{aligned}
\HZ={}&1+\|(\rho,u,R,T)\|_{C^3_{t,x}}
+\|\Pi-\bar\Pi\|_{C^3_{t,x}}\\
&+\|(\rho_*,u_*,R_*,T_*)\|_{C^3_{t,x}}.
\end{aligned}
\]
The approximation error satisfies
\begin{equation}\label{eq:regularization-error}
\begin{aligned}
&\|Z'-Z\|_{C^0_{t,x}}+\|u'-u\|_{C_tH^s}
+\|\rho'u'-\rho u\|_{C_tL^1}\\
&\qquad+\|\rho'u'\otimes u'-\rho u\otimes u\|_{C_tL^1}
\le C_0h\HZ^3.
\end{aligned}
\end{equation}
The higher derivatives obey
\begin{equation}\label{eq:regularization-derivatives}
\|(\rho',u',R',T')\|_{C^{20}_{t,x}}
\le C_*h^{-17}\HZ^3,
\end{equation}
while the pressure satisfies
\begin{equation}\label{eq:regularization-pressure}
\|\Pi'-\bar\Pi\|_{C^3_{t,x}}
\le\|\Pi-\bar\Pi\|_{C^3_{t,x}}.
\end{equation}
Here $C_0$ depends only on $\kappa$, and $C_*$ also depends on
$\|(\rho_*,u_*,R_*,T_*)\|_{C^{20}_{t,x}}$.
\end{lem}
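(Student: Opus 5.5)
The plan is to use linearity of \eqref{eq:linear-defect} in $(\rho,m,\Phi,\Pi,u)$. Write $\tilde\rho=\rho-\rho_*$, $\tilde m=m-m_*$, $\tilde\Phi=\Phi-\Phi_*$, $\tilde u=u-u_*$ and $\tilde\Pi=\Pi-\bar\Pi$. Subtracting the equations for $Z_*$ from those for $Z$ gives
\[
\partial_t\tilde\rho+\D\tilde m=0,\qquad
\partial_t\tilde m+\D\tilde\Phi+\nabla\tilde\Pi-\Delta\tilde u=0,\qquad
\D\tilde u=0.
\]
All tilde fields are supported in $K$, because outside $K$ we have $Z=Z_*$ and hence $m=m_*$, $\Phi=\Phi_*$. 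Space--time convolution with $\kappa_h$ commutes with $\partial_t$ and the spatial derivatives. Since $h<\frac12\dist(K,\{0,1\}\times\TT^3)$, the zero extension creates no boundary terms, so the mollified tilde fields satisfy the same linear system. Adding back the $Z_*$ system, $(\rho',m',\Phi',\Pi',u')$ with $m'=m_*+\tilde m_h$ and $\Phi'=\Phi_*+\tilde\Phi_h$ solves \eqref{eq:linear-defect}.

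By the definitions \eqref{eq:regularized-defects}, $m'=\rho'u'-R'$ and $\Phi'=\rho'u'\otimes u'-T'$. Hence $Z'$ solves \eqref{eq:defect}, and $T'$ is symmetric because $\Phi_*$, $\Phi$ and $\rho'u'\otimes u'$ are. The support claim holds because every $F_h$ vanishes at distance greater than $h$ from $K$, and then the formulas reduce to $Z_*$.

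For \eqref{eq:regularization-error}, apply \eqref{eq:mollifier} with $k=0$ to get $\|\tilde\rho_h-\tilde\rho\|_{C^0}\le 2h\|\tilde\rho\|_{C^1}\le 4h\HZ$, and similarly for $u$ and $\Pi$. The spatial $H^s$ norm is bounded by a $C^2$ norm on $\TT^3$, since $s<3/2<2$. So $\|u'-u\|_{C_tH^s}\le C\|u'-u\|_{C^2_{t,x}}\le Ch\HZ$ by \eqref{eq:mollifier} with $k=2$, which uses only $C^3$ norms.

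The products need a little care. Write $\rho'u'-\rho u=(\rho'-\rho)u'+\rho(u'-u)$, and note $\|u'\|_\infty\le\|u_*\|_\infty+\|\tilde u\|_\infty\le C\HZ$. This gives a bound $Ch\HZ^2$. The tensor product gives $Ch\HZ^3$, and the $L^1$ norms are dominated by $C^0$ norms on the unit torus.

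For $R'-R$, write
\[
R'-R=(\rho'u'-\rho u)-(\tilde m_h-\tilde m),
\]
and use $\|\tilde m\|_{C^1}\le C\HZ^2$, since $m=\rho u-R$. The same decomposition handles $T'-T$, with $\|\tilde\Phi\|_{C^1}\le C\HZ^3$. These give the $C^0$ bound on $Z'-Z$ with a constant depending only on $\kappa$. Some numerical constants come from the product rule, which is harmless.

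For \eqref{eq:regularization-derivatives}, use the second estimate of \eqref{eq:mollifier} with $k=20$, which gives $\|\tilde\rho_h\|_{C^{20}}\le C h^{-17}\|\tilde\rho\|_{C^3}$, and likewise for $\tilde u$, $\tilde m$ and $\tilde\Phi$. The $Z_*$ parts contribute $C^{20}$ norms of $Z_*$, which are absorbed into $C_*$. The products $\rho'u'$ and $\rho'u'\otimes u'$ in $C^{20}$ are handled by Lemma~\ref{lem:calculus}.

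This step is the main obstacle: naively a product of two factors each of size $h^{-17}$ gives $h^{-34}$. To avoid this, expand with Leibniz. Each term places at most $20$ derivatives in total, so the powers of $h^{-1}$ from distinct factors add as $h^{-\max(k_1-3,0)-\max(k_2-3,0)}$ with $k_1+k_2\le 20$. This is at most $h^{-17}$ when $h\le1$, because
\[
\max(k_1-3,0)+\max(k_2-3,0)+\max(k_3-3,0)\le 20-3 .
\]
The $\HZ$ factors multiply to at most $\HZ^3$.

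Finally, \eqref{eq:regularization-pressure} holds because $\Pi'-\bar\Pi=\tilde\Pi_h$, and convolution with a probability kernel does not increase $C^3$ norms.
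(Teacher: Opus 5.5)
Your proposal is correct and follows the paper's proof essentially step by step. Both arguments mollify the differences from $Z_*$ in the linear variables $(\rho,m,\Phi,\Pi,u)$, and both handle the low norms with the decompositions $\rho'u'-\rho u=(\rho'-\rho)u'+\rho(u'-u)$ and $R'-R=(\rho'u'-\rho u)-(\tilde m_h-\tilde m)$. Both also use the Leibniz count $\sum_l\max\{i_l-3,0\}\le 17$ to avoid the naive $h^{-34}$ loss in the $C^{20}$ bound, and treat the pressure by convolution with a probability kernel.
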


\begin{proof}
By the linear formulation in Definition~\ref{def:system}, the differences
of $Z$ and $Z_*$ satisfy
\[
\begin{aligned}
\partial_t(\rho-\rho_*)+\D(m-m_*)&=0,\\
\partial_t(m-m_*)+\D(\Phi-\Phi_*)
+\nabla(\Pi-\bar\Pi)-\Delta(u-u_*)&=0.
\end{aligned}
\]
and these differences are supported in $K$. Mollification commutes
with derivatives, so the mollified differences satisfy the same
equations. By \eqref{eq:regularized-fields} and
\eqref{eq:regularized-defects}, $\rho'u'-R'=m_*+(m-m_*)_h$ and
$\rho'u'\otimes u'-T'=\Phi_*+(\Phi-\Phi_*)_h$. Adding the equations for
$Z_*$ shows that $Z'$ solves the system of Definition~\ref{def:system}; also $\D u'=0$ and $T'$
is symmetric. Where $\dist((t,x),K)>h$, all mollified differences vanish, so
$\rho'=\rho_*$, $u'=u_*$, $\Pi'=\bar\Pi$, $R'=R_*$ and $T'=T_*$ there.

\proofstep{Step 1: Approximation in low norms}
In this step $C$ depends only on $\kappa$. We have
$\|\rho-\rho_*\|_{C^3}+\|u-u_*\|_{C^3}+\|\Pi-\bar\Pi\|_{C^3}\le3\HZ$,
$\|m-m_*\|_{C^1}\le C\HZ^2$ and $\|\Phi-\Phi_*\|_{C^1}\le C\HZ^3$. By
\eqref{eq:mollifier},
\[
\|\rho'-\rho\|_{C^0}+\|u'-u\|_{C^2}+\|\Pi'-\Pi\|_{C^0}\le Ch\HZ,
\]
since $\rho'-\rho=(\rho-\rho_*)_h-(\rho-\rho_*)$, and similarly for $u$
and $\Pi$. In particular $\|u'-u\|_{C_tH^s}\le C\|u'-u\|_{C^2}\le Ch\HZ$,
and $\|u'\|_\infty+\|\rho'\|_\infty\le C\HZ$. Next,
\[
R'-R=(\rho'u'-\rho u)-\bigl[(m-m_*)_h-(m-m_*)\bigr],
\]
because $R=\rho u-m_*-(m-m_*)$. Both terms are bounded by $Ch\HZ^2$,
using $\rho'u'-\rho u=(\rho'-\rho)u'+\rho(u'-u)$. In the same way
$\|T'-T\|_{C^0}+\|\rho'u'\otimes u'-\rho u\otimes u\|_{C^0}\le Ch\HZ^3$.
Since $|\TT^3|=1$, the $L^1$ norms are bounded by the $C^0$ norms.

\proofstep{Step 2: Higher derivatives}
By \eqref{eq:mollifier}, for $0\le i\le20$,
\[
\|\rho'\|_{C^i}+\|u'\|_{C^i}
\le C_*+Ch^{-\max\{i-3,0\}}\HZ.
\]
The regularized modified momentum and flux satisfy
\[
\|(m-m_*)_h\|_{C^{20}}+\|(\Phi-\Phi_*)_h\|_{C^{20}}
\le Ch^{-17}\HZ^3.
\]
In differentiating $\rho'u'$ or $\rho'u'\otimes u'$, the twenty
derivatives are distributed among the factors. The powers of $h^{-1}$
in any product add up to at most seventeen, since
\[
\sum_l\max\{i_l-3,0\}\le17
\qquad\text{whenever}\qquad \sum_l i_l=20.
\]
Leibniz' rule therefore gives the bound for $R'$ and $T'$ in
\eqref{eq:regularization-derivatives}. Finally,
$\Pi'-\bar\Pi=(\Pi-\bar\Pi)_h$, and convolution with a probability
density does not increase the $C^3$ norm. This proves
\eqref{eq:regularization-pressure}.

\end{proof}

\subsection{Proof of Proposition~\ref{prop:main}}

\begin{proof}
At each stage we first regularize the difference from $Z_*$, then
apply Proposition~\ref{prop:step}. The regularization restores the
derivative bounds required for the next perturbation. We choose its
scale so that its effect on the low norms and on the pointwise density
bound is much smaller than the allowed increment.

\proofstep{Step 1: Choice of the support margins}
Write $J=(t_-,t_+)$, and choose $\ell>0$ with
$K_*\subset(t_-+2\ell,t_+-2\ell)\times B$. Define
\[
I_n=\bigl[t_-+\ell(1+2^{-n}),\,t_+-\ell(1+2^{-n})\bigr]
\]
and
\[
I_n'=\bigl[t_-+\ell(1+\tfrac342^{-n}),\,
t_+-\ell(1+\tfrac342^{-n})\bigr].
\]
Then $I_n\subset I_n'\subset\operatorname{int}I_{n+1}$ and
$K_*\subset I_0\times B$. Both the inner time gap and
$\tau_n=\dist(I_n',\partial I_{n+1})$ equal $\ell2^{-n}/4$.

For the spatial supports, let
\[
r_\infty=\tfrac12(r_0+r_B),\qquad
r_n=r_\infty-(r_\infty-r_0)2^{-n},
\]
and set
\[
r_0^-=r_0,\qquad
r_n^-=\tfrac12(r_{n-1}+r_n)\quad(n\ge1).
\]
All supports will lie in the compact subset
\[
K_\infty=[t_-+\ell,t_+-\ell]\times\overline B(x_0,r_\infty)
\subset Q.
\]
We use the spatial gap and regularization scale
\[
d_n=\frac{r_{n+1}-r_n}{64N+40},\qquad
h_n=\lambda_n^{-N^3}.
\]
Since inverse support gaps grow at most like $C2^n$, whereas
$\lambda_n=a^{b^n}$, for sufficiently large $a>0$ we have uniformly in $n$
\begin{equation}\label{eq:gaps}
\begin{aligned}
\tau_n^{-1}+d_n^{-1}&\le\lambda_n,\\
h_n&<\min\{\tau_n,\ell/2\}\qquad(n\ge0),\\
h_n&<\tfrac14(r_n-r_{n-1})\qquad(n\ge1).
\end{aligned}
\end{equation}

\proofstep{Step 2: The bounds to be propagated}
We construct $Z_n$, starting from $Z_0=Z_*$, so that $Z_n-Z_*$ and
$(R_n,T_n)$ are supported in
$I_n\times\overline B(x_0,r_n^-)$. The norm bounds are
\begin{equation}\label{eq:induction-bounds}
\begin{aligned}
\|(\rho_n,u_n,R_n,T_n)\|_{C^3_{t,x}}
+\|\Pi_n-\bar\Pi\|_{C^3_{t,x}}&\le\lambda_n^{N^2},\\
\|u_n\|_\infty&\le\delta_n^{-1},\\
\|(R_n,T_n)\|_{C_tL^p}&\le\delta_n^8\qquad(n\ge1).
\end{aligned}
\end{equation}
For the density, we require the direct pointwise lower bound
\begin{equation}\label{eq:induction-positivity}
\rho_n\ge\rho_*-\delta_0
+\delta_n^2|R_n|+\delta_n^4|T_n|
+2\sum_{j\ge n}\delta_j^2.
\end{equation}
The defect terms cover the density removed by the next principal
perturbation; the tail covers the remaining uniform losses. The
initial conditions hold for sufficiently large $a>0$, since $Z_*$
is fixed and
\[
\delta_0^2\|R_*\|_\infty+\delta_0^4\|T_*\|_\infty
+2\sum_{j\ge0}\delta_j^2\le\delta_0.
\]

\proofstep{Step 3: Regularization before the perturbation}
Assume the preceding bounds at level $n$. If $n=0$, put
$Z_0'=Z_0$. If $n\ge1$, apply Lemma~\ref{lem:reg} with
\[
K=I_n\times\overline B(x_0,r_n^-),\qquad h=h_n,
\]
and denote the result by $Z_n'$. By
\eqref{eq:induction-bounds}, the quantity $\HZ$ in that lemma is at
most $3\lambda_n^{N^2}$. Estimate \eqref{eq:regularization-error} gives
\begin{equation}\label{eq:induction-regularization-error}
\begin{aligned}
&\|Z_n'-Z_n\|_{C^0_{t,x}}
+\|u_n'-u_n\|_{C_tH^s}\\
&\quad+\|\rho_n'u_n'-\rho_nu_n\|_{C_tL^1}\\
&\quad+\|\rho_n'u_n'\otimes u_n'
-\rho_nu_n\otimes u_n\|_{C_tL^1}\\
&\qquad\le27C_0\lambda_n^{-N^3+3N^2}\le\lambda_n^{-N^3/2}\le\delta_n^{16}.
\end{aligned}
\end{equation}
By \eqref{eq:regularization-derivatives},
\[
\begin{aligned}
\|(\rho_n',u_n',R_n',T_n')\|_{C^{20}_{t,x}}
&\le27C_*\lambda_n^{17N^3+3N^2}\\
&\le\lambda_n^{N^4/2}\le\lambda_n^{N^4}.
\end{aligned}
\]

By \eqref{eq:gaps}, the regularized support is contained in
$I_n'\times\overline B(x_0,r_n)$. The same set contains $K_*$, so
it also contains the supports of $R_n',T_n'$. Combining \eqref{eq:induction-regularization-error} with
\eqref{eq:induction-bounds} gives
\[
\|u_n'\|_\infty\le2\delta_n^{-1},\qquad
\|(R_n',T_n')\|_{C_tL^p}\le2\delta_n^8
\quad(n\ge1),
\]
and
\begin{equation}\label{eq:regularized-density}
\rho_n'\ge\rho_*-\delta_0
+\delta_n^2|R_n'|+\delta_n^4|T_n'|
+2\sum_{j\ge n}\delta_j^2-3\delta_n^{16}.
\end{equation}
Consequently, $Z_n'$ satisfies all the hypotheses for one step,
with
\[
\begin{aligned}
I'&=I_n',&J'&=\operatorname{int}I_{n+1},\\
r_-&=r_n,&r_+&=r_{n+1},\\
\tau&=\tau_n,&d&=d_n.
\end{aligned}
\]

\proofstep{Step 4: The next perturbation}
Let $Z_{n+1}=(Z_n')^+$ be given by Proposition~\ref{prop:step}.
The support conclusion places $Z_{n+1}-Z_n'$ and its defects
inside $I_{n+1}\times\overline B(x_0,r_{n+1}^-)$.
The difference $Z_n'-Z_*$ is supported in the same set, so the
required support property holds at level $n+1$.

By Proposition~\ref{prop:step} and
\eqref{eq:regularization-pressure},
\[
\begin{aligned}
&\|(\rho_{n+1},u_{n+1},R_{n+1},T_{n+1})\|_{C^3_{t,x}}
+\|\Pi_{n+1}-\bar\Pi\|_{C^3_{t,x}}\\
&\qquad\le\tfrac12\lambda_{n+1}^{N^2}
+\|\Pi_n'-\bar\Pi\|_{C^3_{t,x}}\\
&\qquad\le\tfrac12\lambda_{n+1}^{N^2}+\lambda_n^{N^2}
\le\lambda_{n+1}^{N^2}.
\end{aligned}
\]
The remaining norm bounds in \eqref{eq:induction-bounds} follow
from Proposition~\ref{prop:step}. For the density, apply its lower
bound for the full perturbation to the regularized fields and use
\eqref{eq:regularized-density}:
\[
\begin{aligned}
\rho_{n+1}\ge{}&\rho_*-\delta_0
+\delta_{n+1}^2|R_{n+1}|+\delta_{n+1}^4|T_{n+1}|\\
&+2\sum_{j\ge n}\delta_j^2-\delta_n^2-3\delta_n^{16}.
\end{aligned}
\]
Since $3\delta_n^{16}\le\delta_n^2$, the last line is at least
$2\sum_{j\ge n+1}\delta_j^2$. This proves
\eqref{eq:induction-positivity} at level $n+1$ and closes the
induction.

\proofstep{Step 5: Convergence bounds and the density lower bound}
The common support property follows from the construction of
$K_\infty$. Combining Proposition~\ref{prop:step} and
\eqref{eq:induction-regularization-error} gives
\[
\begin{aligned}
\|\rho_{n+1}-\rho_n\|_{C_tL^p}
+\|u_{n+1}-u_n\|_{C_tH^s}
&\le2\delta_n^{16}+\delta_n^2+\delta_{n+1}^{16}\\
&\le\delta_n.
\end{aligned}
\]
This proves the density and velocity increment bound. The defect
bound \eqref{eq:iteration-defects} is the last estimate in
\eqref{eq:induction-bounds}.

For $n\ge1$, Proposition~\ref{prop:step} and
\eqref{eq:induction-regularization-error} bound the total increment
of momentum, flux and pressure by
$\delta_n^2+3\delta_n^{16}\le\delta_n$. For the regularized pressure
we also use
$\|\Pi_n'-\Pi_n\|_{L^1}\le\|Z_n'-Z_n\|_{C^0}$.
This proves the required momentum, flux, and pressure increment
bound.

Finally, dropping the nonnegative defect and tail terms in
\eqref{eq:induction-positivity} gives
$\rho_n\ge\rho_*-\delta_0$. This is the required density lower
bound and completes the proof.
\end{proof}

\appendix

\section{Elementary auxiliary estimates}\label{app:elementary}

We prove Lemmas~\ref{lem:cutoffs}, \ref{lem:calculus}, and
\ref{lem:operators}. The cutoff construction gives the nested supports
and the derivative bound used for positivity; the kernel calculation
gives both the periodic scaling and the endpoint integral estimates.

\subsection{Time cutoffs}

\begin{proof}[Proof of Lemma~\ref{lem:cutoffs}]
We first construct a smooth increasing cutoff on the real line, then
rescale it at the two endpoints of $I$.

\proofstep{Step 1: A cutoff with a controlled derivative}
Define $\eta_0\in C^\infty(\RR;[0,1])$ by
\[
\eta_0(t)=
\begin{cases}
0,&t\le0,\\[2pt]
\displaystyle\frac{e^{-1/t}}{e^{-1/t}+e^{-1/(1-t)}},&0<t<1,\\[6pt]
1,&t\ge1.
\end{cases}
\]
On $(0,1)$, differentiation gives
\[
\eta_0'=\eta_0(1-\eta_0)
\bigl(t^{-2}+(1-t)^{-2}\bigr)\ge0.
\]
For $0<t\le1/2$, $\eta_0(t)\le e^2e^{-1/t}$, so
\[
|\eta_0'|\eta_0^{-1+1/(16N)}
\le2t^{-2}\eta_0^{1/(16N)}
\le2e^{1/(8N)}t^{-2}e^{-1/(16Nt)}\le C_N.
\]
For $t\ge1/2$, $\eta_0\ge1/2$ and $\eta_0'$ is bounded. Thus
\begin{equation}\label{eq:basic-time-cutoff}
|\eta_0'|\le C_N\eta_0^{1-1/(16N)}\quad\text{on }\RR.
\end{equation}

\proofstep{Step 2: The three nested cutoffs}
Set
\[
\eta(t)=\eta_0\Bigl(\frac{10(t-t_-)}\tau+2\Bigr)
\eta_0\Bigl(\frac{10(t_+-t)}\tau+2\Bigr),
\]
and define $\eta_u$ and $\eta_d$ with $2$ replaced by $4$ and $6$,
respectively. Then $\eta=1$ on
$[t_--\tau/10,t_++\tau/10]$, and
\[
\begin{aligned}
\Supp\eta&\subset[t_--\tau/5,t_++\tau/5]\subset\{\eta_u=1\},\\
\Supp\eta_u&\subset[t_--2\tau/5,t_++2\tau/5]\subset\{\eta_d=1\},\\
\Supp\eta_d&\subset[t_--3\tau/5,t_++3\tau/5]\Subset J'.
\end{aligned}
\]
The gaps in the definitions make each cutoff one near the preceding
support.

The desired $C^j$ bounds follow by scaling,
since $\tau<1$. Apply \eqref{eq:basic-time-cutoff} to each factor of
$\eta_d$. Both factors lie in $[0,1]$, so each term in the product
rule is bounded by $C_N\tau^{-1}\eta_d^{1-1/(16N)}$. This gives the
desired bound for $\eta_d'$.
\end{proof}

\subsection{Calculus inequalities}

\begin{proof}[Proof of Lemma~\ref{lem:calculus}]
Leibniz' rule gives the product estimate, since the binomial
coefficients for a derivative of order at most $j$ sum to at most
$2^j$.

For the composition estimate,
$\|\Gamma\circ v\|_\infty\le\|\Gamma\|_{C^0(\mathcal K)}$.

For $1\le i\le j$, the Fa\`a di Bruno formula writes each derivative
of order $i$ as a sum of at most $C(j,m)$ terms of the form
\[
(\nabla^l\Gamma)(v)
\bigl[\partial^{\alpha_1}v,\dots,\partial^{\alpha_l}v\bigr],
\]
where the multi-indices satisfy
\[
|\alpha_a|\ge1,\qquad \sum_{a=1}^l|\alpha_a|=i.
\]
Here $1\le l\le i$, and the derivatives are in $(t,x)$. Every such
term is bounded by
\[
\|\nabla^l\Gamma\|_{C^0(\mathcal K)}
\|v\|_{C^j_{t,x}}^l
\le\|\Gamma\|_{C^j(\mathcal K)}
\bigl(1+\|v\|_{C^j_{t,x}}\bigr)^j.
\]
This proves the composition bound. For positive-order derivatives,
only derivatives of $\Gamma$ of orders $1,\dots,j$ occur. Replacing
their norms on $\mathcal K$ by their global bounds proves the last
assertion.
\end{proof}

\subsection{Periodic kernels}

\begin{proof}[Proof of Lemma~\ref{lem:operators}]
We identify the singularity of each inverse Laplacian, then deduce
the operator bounds and the scaling identities.

\proofstep{Step 1: The periodic Green functions}
The mean-zero periodic Green function has Fourier coefficients
\[
\widehat G(0)=0,\qquad
\widehat G(k)=-\frac1{4\pi^2|k|^2}
\quad(k\in\ZZ^3\setminus\{0\}).
\]
Thus $G_2=G*G$ is mean-zero, $\Delta G_2=G$, and
$\Delta^{-2}\Pne f=G_2*f$.

Choose a scalar $\chi\in C^\infty_c(B(0,1/2))$ equal to one on
$B(0,1/4)$, and regard it as periodic on the fundamental cube.
Since $\Delta(-1/(4\pi|z|))=\delta_0$, the distribution
\[
G(z)+\frac{\chi(z)}{4\pi|z|}
\]
has a smooth Laplacian and hence is smooth by elliptic regularity.
Similarly, $\Delta|z|=2/|z|$ gives
\[
G_2(z)+\frac{\chi(z)|z|}{8\pi}\in C^\infty(\TT^3).
\]
Both Green functions are smooth away from the origin. Differentiating
these expressions proves the asserted kernel bounds.

\proofstep{Step 2: Kernel bounds and boundedness}
By definition, the kernel of $\Ro$ is
$\nabla G$; the entries of the kernel of
$\Rs$ are linear combinations of entries of $\nabla G$ and
$\nabla^3G_2$. The kernel of $\Delta^{-1}\Pne$ is $G$.
The Green-function bound just proved gives the asserted kernel
estimates for these operators.

All these kernels are integrable and have zero mean. Young's
inequality gives boundedness on $L^q$ for $1\le q\le\infty$.
Convolution commutes with translations and derivatives, including
time derivatives, so the same bounds hold on $C^j_{t,x}$.

\proofstep{Step 3: Scaling and integrable derivatives}
The nonzero Fourier multipliers of $\Ro$ and $\Rs$ are
homogeneous of degree $-1$; the multiplier of
$\Delta^{-1}\Pne$ has degree $-2$. For integer $\lambda$,
\[
\widehat{g(\lambda\cdot)}(\lambda k)=\widehat g(k),
\qquad
\widehat{g(\lambda\cdot)}(k)=0
\quad\text{if }k\notin\lambda\ZZ^3.
\]
Applying the multipliers at these frequencies proves the scaling
identities.

Finally, the kernel of $\Delta^{-2}\partial^\alpha\Pne$ is
$\partial^\alpha G_2$. Its singularity has order $1-|\alpha|$,
which is integrable for $|\alpha|\le3$.
Another application of Young's inequality gives the last assertion.
\end{proof}

\vspace{1.5em}
\noindent{\sc Xingyu Li}\\
Department of Mathematics and Geosciences, University of Trieste,
via Valerio 12/1, Trieste, 34127 Italy\\
{\it Email address:} \texttt{xingyuli92@gmail.com}

\end{document}